\newcommand{\p}{\partial}
\newcommand{\dd}{{\rm d}}
\begin{document}

\title{Convex neighborhoods for Lipschitz connections and sprays\thanks{This work has been
partially supported by GNFM of INDAM.}
}

%\titlerunning{Short form of title}        % if too long for running head

\author{E. Minguzzi }

%\authorrunning{Short form of author list} % if too long for running head

\institute{E. Minguzzi \at
              Dipartimento di Matematica e Informatica ``U. Dini'', Universit\`a
degli Studi di Firenze, Via S. Marta 3,  I-50139 Firenze, Italy. \\
              \email{ettore.minguzzi@unifi.it}           %  \\
%             \emph{Present address:} of F. Author  %  if needed
}

\date{}
% The correct dates will be entered by the editor

\maketitle

\begin{abstract}
We establish that over a $C^{2,1}$ manifold the
exponential map of any Lipschitz  connection or spray determines a
local Lipeomophism and that, furthermore, reversible convex normal
neighborhoods do exist.
To that end we use the  method of Picard-Lindel\"of approximation to
prove the strong differentiability  of the exponential map at the
origin and hence a version of Gauss' Lemma which does not require
the differentiability of the exponential map. Contrary to naive differential degree counting,
the distance functions are shown to gain one degree and hence to be $C^{1,1}$.

As an application to
mathematical relativity, it is argued that the mentioned
differentiability conditions can be considered the optimal ones to
preserve most results of causality theory. This theory is
also shown to be generalizable to the Finsler spacetime case. In
particular, we prove that the local Lorentzian(-Finsler) length
maximization property of causal geodesics in the class of absolutely
continuous causal curves holds already for $C^{1,1}$ spacetime
metrics. Finally, we study the local existence of convex functions
and show that arbitrarily small globally hyperbolic convex normal
neighborhoods do exist.
%\keywords{First keyword \and Second keyword \and More}
% \PACS{PACS code1 \and PACS code2 \and more}
%\subclass{53B15 \and MSC 26A16 \and 53B40 \and 83Cxx}
\end{abstract}

\newpage

%\tableofcontents

\begingroup \small
\contentsline {section}{\numberline {1}Introduction and results}{2}
\contentsline {subsection}{\numberline {1.1}Example: Pseudo-Finsler geometry}{3}
\contentsline {subsection}{\numberline {1.2}The exponential map for sprays}{5}
\contentsline {subsection}{\numberline {1.3}Gauss' Lemma for pseudo-Finsler sprays}{13}
\contentsline {subsection}{\numberline {1.4}Some applications to mathematical relativity}{17}
\contentsline {subsection}{\numberline {1.5}Distance balls are convex}{20}
\contentsline {subsection}{\numberline {1.6}Convexity on Lorentzian manifolds}{23}
\contentsline {subsection}{\numberline {1.7}Two variations on the main theme}{25}
\contentsline {subsection}{\numberline {1.8}Some technical preliminary results}{26}
\contentsline {section}{\numberline {2}Proofs I: A Picard-Lindel\"of analysis}{28}
\contentsline {subsection}{\numberline {2.1}Existence of geodesics}{31}
\contentsline {subsection}{\numberline {2.2}Lipschitz dependence on initial conditions {\relax (Theorem 1)}}{32}
\contentsline {subsection}{\numberline {2.3}Strong differentiability of exp {\relax  (Theorems 3 and 13)}}{34}
\contentsline {subsubsection}{\numberline {2.3.1}Normal vector bundle case {\relax (Theorem 13)}}{36}
\contentsline {subsection}{\numberline {2.4}Convex neighborhoods {\relax  (Theorem 4)}}{37}
\contentsline {subsection}{\numberline {2.5}Role of the coordinate affine structure and position vector}{38}
\contentsline {subsection}{\numberline {2.6}Local Lipeomorphisms {\relax (Theorems 3 and 13)}}{41}
\contentsline {section}{\numberline {3}Proofs II: Pseudo-Finsler sprays and connections}{42}
\contentsline {subsection}{\numberline {3.1}Gauss' Lemma {\relax  (Theorem 5)}}{42}
\contentsline {subsection}{\numberline {3.2}Local properties of geodesics in pseudo-Finsler geometry {\relax  (Theorem 6)}}{44}
\contentsline {subsection}{\numberline {3.3}Strong convexity of squared Riemannian distance {\relax  (Theorem 8)}}{47}
\contentsline {subsection}{\numberline {3.4}Splitting of the metric at a given radius {\relax (Theorem 9)}}{48}
\contentsline {subsection}{\numberline {3.5}Some local results on the strong concavity of the squared Lorentzian distance {\relax (Theorems 10, 11, Corollary 2)}}{49}
\endgroup 

\section{Introduction and results}

Let $M$ be an $n$-dimensional paracompact connected $C^{2,1}$
manifold and let $x^\mu\colon U\to \mathbb{R}^n$ be a local chart
where $U$ is an open subset. Every chart induces a chart $(x^\mu,
v^\mu)\colon \pi^{-1}(U)\to \mathbb{R}^n\times \mathbb{R}^n$, $\mu=0,1,\cdots,n-1$, on
the tangent bundle $\pi: TU\to U$.

For the moment let us consider a second order ODE defined just over
$U$ by
\begin{align}
\frac{\dd x^{\mu}}{ \dd t}&=v^\mu,  \label{fhs} \\
\frac{\dd v^\mu}{\dd t}&=H^\mu(x,v) \label{fha} ,
\end{align}
where $H^\mu$ is locally Lipschitz. Under a coordinate change
$\tilde{x}^\mu=\tilde{x}^\mu(x^\alpha)$ the system becomes
$\dot{\tilde{x}}^\mu=\tilde{v}^\mu$, $\dot{\tilde{v}}^\mu=
\tilde{H}^\mu(\tilde{x},\tilde{v})$, where
\begin{equation} \label{eic}
\tilde{H}^\mu(\tilde{x},\tilde{v})=\frac{\p \tilde{x}^\mu}{\p
x^\alpha \p x^\beta} \frac{\p x^\alpha}{\p \tilde{x}^\gamma}
\frac{\p x^\beta}{\p \tilde{x}^\delta}\, {\tilde{v}}^\gamma
{\tilde{v}}^\delta+ \frac{\p \tilde{x}^\mu}{\p x^\nu}\,
H^\nu(x(\tilde{x}), v(\tilde{x},{\tilde{v}})),
\end{equation}
and  $v^\mu(\tilde{x},{\tilde{v}})=\frac{\p x^\mu}{\p \tilde{x}^\nu}
\,\tilde{v}^\nu$.
%This transformation shows that it makes sense to
%consider over the whole $TM$ flows which are characterized locally
% by the condition:
The transformation shows that the following notions are well-defined as independent of the coordinate chart:
\begin{itemize}
\item[(a)] $H^\mu$ is positive homogeneous of second degree on the
velocities, that is, for every $s>0$, $H^\mu(x,sv)=s^2H^\mu(x,v)$,
%\end{itemize}
% or by the more restrictive condition
%\begin{itemize}
\item[(b)] $H^\mu$ is a homogeneous quadratic form in the velocities.
\end{itemize}
% Indeed, these properties are independent of the chart.
In the former case we say that the second
order ODE defines a {\em spray} over $M$ \cite{ambrose60,lang99}
while in the latter more restrictive case we say that it defines a (torsionless) {\em
connection}. In the latter case we set
$H^\mu(x,v)=-\Gamma^\mu_{\alpha \beta}(x) v^\alpha v^\beta$ and we
recognize in Eq.\ (\ref{eic})  the transformation rule for the
Christoffel symbols $\Gamma^\mu_{\alpha \beta}$. The transformation
rule (\ref{eic}) clarifies that $M$ must be $C^{2,1}$ to make sense
of the Lipschitz condition on sprays. The non-stationary solutions
to Eqs.\ (\ref{fhs})-(\ref{fha}) will be called {\em geodesics}.

\begin{remark}
Actually, the above notion of spray is somewhat more general than
that introduced in \cite{ambrose60,lang99} since these authors drop
the condition $s>0$ on (a). Our definition, consistent with current
usage \cite{antonelli93}, allows us to include non-reversible
pseudo-Finsler manifolds in our analysis of sprays. Clearly, if
$H^\mu(x,v)$ defines a spray then $\tilde{H}^\mu(x,v):=H^\mu(x,-v)$
 also defines a  spray called the {\em reverse spray}. If
$H^\mu(x,v)=H^\mu(x,-v)$ the spray is called {\em reversible}. If
$x(t)$ is a geodesic then $x(-t)$ is a geodesic for the reverse
spray but not necessarily for the original spray. Thus the direction
of the parametrization of geodesics is  important. If we say
that two points are connected by a unique geodesic we tacitly assume
that the spray under consideration is $H^\mu(x,v)$; that claim does
not exclude the possible presence of a connecting geodesic, with
different image, for the reverse spray.
\end{remark}

From now on we shall consider just locally Lipschitz sprays and we
shall clearly speak of connection whenever $H^\mu$ is a quadratic
form. If $H^\mu$ is twice continuously differentiable with respect
to the velocities on the zero section of $TM$, then differentiating
twice $H^\mu(x,s v)=s^2H^\mu(x,v)$ with respect to $s$ and letting
$s\to 0$ we obtain $H^\mu(x,v)=\frac{1}{2}(\p^2 H^\mu/\p
v^\alpha \p v^\beta(x,0)) v^\alpha v^\beta$, that is the spray is a
connection \cite{lang99}. Whenever the connection comes from a
pseudo-Riemannian metric $g$ we shall assume $g$ to be $C_{loc}^{1,1}$.
For notational convenience, we shall write just $C^{k,1}$ for
$C^{k,1}_{loc}$. It is customary \cite{berestovskij93} to call {\em non-regular} the geometrical theory for which the differentiability condition on $g$ is weaker than $C^2$ (or that on the connection or spray is weaker than $C^1$). As an example of manifold which has a $C^{1,1}$ metric $g$, consider a cylinder closed by two semispherical cups, where the submanifold is endowed with the  metric induced from the Euclidean space.

\subsection{Example: Pseudo-Finsler geometry} \label{hbh}
Pseudo-Finsler geometry \cite{beem70,beem76b,akbarzadeh88} is a
generalization of Finsler geometry \cite{matsumoto86,bao00} in which
the fundamental tensor $g$ is required to be non-degenerate rather
than positive definite. Geodesics in pseudo-Finsler geometry are
described by sprays.

Since the definitions of pseudo-Finsler manifold which can be found
in the literature impose too strong differentiability conditions we
 provide a different definition.

\begin{definition}
A {\em pseudo-Finsler manifold} $(M,g)$ is a paracompact connected
$C^{2,1}$ manifold endowed with a $C^{1,1}$ symmetric  tensor \[g:
TM\backslash 0 \to T^*M\otimes_{M} T^*M, \quad (x,v) \mapsto g_{(x,v)},\]
defined on the non vanishing vectors, which is non-singular and
satisfies (in one and hence every chart induced from a chart on $M$)
\begin{equation} \label{jui}
\frac{\p g_{(x,v)\, \mu \nu }}{\p v^\alpha} \, v^\nu=0, \qquad
\frac{\p g_{(x,v)\, \mu \nu }}{\p v^\alpha} \, v^\alpha=0.
\end{equation}
It is called reversible if $g_{(x,v)}=g_{(x,- v)}$.
\end{definition}
 Sometimes we
shall write $g$ or $g_v$ for $g_{(x,v)}$ either in order to shorten the
notation or because we regard $v$ as an element of $TM\backslash 0$.
In the pseudo-Riemannian case $g$ is independent of $v$ and written without the $v$ index.
%does not depend on $v$ hence has no index.
\begin{remark}
The latter equation in display is equivalent to the homogeneity
condition: for every $s>0$, $g_{(x,sv)}=g_{(x,v)}$. Thus it implies that $L:TM\to \mathbb{R}$ defined by
\begin{align}
L(x,v)&:=\frac{1}{2}\, g_{(x,v)}(v,v), \ \textrm{for} \ v\ne 0,  \label{deg}\\
L(x,0)&:=0
\end{align}
is positively homogeneous of second degree, namely for every $s>0$,
and $v\ne 0$, $L(x,sv)=s^2 L(x,v)$. The former equation implies
\begin{align}
\frac{\p L}{\p v^\mu}(x,v)&=g_{(x,v)\, \mu \nu } v^\nu,
\label{njr}\\
\frac{\p^2 L}{\p v^\mu\p v^\nu} &=g_{(x,v)\, \mu \nu}. \label{kip}
\end{align}
\end{remark}

We could have defined the pseudo-Finsler manifold as a pair $(M,L)$
in which $L$ is positive homogeneous of second  degree, and where
$g$ is defined through Eq.\ (\ref{kip}). This is the definition
adopted by most authors. Indeed, differentiating twice with respect to $s$,
$L(x,sv)=s^2L(x,v)$, and setting $s=1$ gives Eqs.\ (\ref{deg}). Equation (\ref{njr}) is obtained from  Eq.\ (\ref{kip}) observing that $\frac{\p L}{\p v^\mu}$ is positively homogeneous of first degree.

Our definition dispenses with  additional differentiability
conditions that would have to be imposed on $L$ in order to define
$g$.
% (with some subtleties related to the fact that $TM$ is just
%$C^{1,1}$ and so $L$ can not be naively defined as a $C^2$ function
%on $TM\backslash 0$, the induced chart from $M$ should be used in
%the definition).
Furthermore, it has the advantage of making clear
the
 connection with pseudo-Riemannian
geometry. Also it clarifies that not every tensor $g_{(x,v)}$,
positive homogeneous of zero degree in $v$, is a pseudo-Finsler
metric as the first equation in (\ref{jui}) has to be satisfied.

%$TM$ is just $C^{1,1}$ so we cannot define $g$ imposing the $C^2$
%differentiability of $L$ outside the zero section because such
%requirement would not make sense. A theorem by Milnor could be used
%to justify the use of $C^{\infty}$ manifolds, however this would
%require further discussion. Our definition has also the advantage
%that it makes clear the connection with pseudo-Riemannian geometry.

A geodesic is a stationary point of the functional (with a prime we
denote differentiation, typically with respect to a parameter $s$,
if the parameter is $t$ we often use  a dot)
\[
S[x]=\int_{s_0}^{s_1} L(x,x')\, \dd s , \qquad x:[s_0,s_1]\to M,\
x(s_0)=x_0, \ x(s_1)=x_1 ,
\]
where $x\in C^1([s_0,s_1])$.
%$L:TM\to \mathbb{R}$ is positive
%homogeneous of second degree in the velocities and the fundamental
%tensor $g: TM\backslash 0 \to T^*M\otimes_{M} T^*M$
%\[
%g_{(x,v)}(w_1,w_2):=\frac{1}{2}\frac{\p^2 L}{\p v^\mu\p
%v^\nu}\,w_1^\mu w_2^\mu ,
%\]
%defined for $v\ne 0$ (namely outside the zero section of $TM$) is
%$C^{1,1}$ in $(x,v)$ and non-singular. Often, for shortness we shall
%write $g_v$ for $g_{(x,v)}$.
By the same argument used above for $H^\mu$ we cannot demand that $g$
exists and is continuous on the zero section unless $L$ is
quadratic in the velocities, which corresponds to the case of
pseudo-Riemannian geometry.  In our terminology the Finsler
(Riemannian) structures are special cases of the pseudo-Finsler
(pseudo-Riemannian) ones. In the former case $\sqrt{2L}$ is often
denoted $F$.

We observe that it is always possible to introduce an auxiliary
Riemannian metric $h$ over $M$ and to consider the unit sphere
subbundle of $TM$. If $x$ is kept fixed then $g_{(x,v)\, \mu \nu}$
depends only on the direction and orientation of $v$ namely on $\hat{v}$,
and since the unit sphere bundle is compact over compact subsets,
$g_{(x,v)\, \mu \nu}$ is  bounded in relatively compact
neighborhoods of points of $TM$ belonging to the zero section. The
same observation holds for the partial derivatives with respect to
$x$ of $g$, and hence combinations such as $g_{v\, \alpha \beta}
v^\delta$ or $\frac{\p g_{v\, \nu \beta}}{\p x^\alpha} v^\delta$ are
locally Lipschitz also at the zero section
 once they are defined to vanish there. In particular, Eqs.\
 (\ref{deg}) and (\ref{njr}) make sense also for $v=0$, and for fixed $x$, $L$ is $C^{1,1}$ on the zero section and $C^{3,1}$ outside it.

The Lagrangian $L$ is constant over the geodesics because, using the
Euler-Lagrange equations (we cannot invoke the Hamiltonian to obtain
this result since we have not proved the convexity of $L$ in the
velocities)
\[
\frac{\dd L}{\dd t}=\frac{\p L}{\p x^\mu}\, v^\mu+ \frac{\p L}{\p
v^\mu} \frac{\dd v^\mu}{\dd t}=(\frac{\dd }{\dd t}\frac{\p L}{\p
v^\mu}) v^\mu+ \frac{\p L}{\p v^\mu} \frac{\dd v^\mu}{\dd
t}=\frac{\dd }{\dd t} (\frac{\p L}{\p v^\mu} v^\mu)=2\frac{\dd
L}{\dd t}
\]
where in the last step we used the homogeneity of $L$. The spray reads
\[
H^\mu(x,v)=-\frac{1}{2}g_v^{\mu \nu}\,(\frac{\p g_{v\, \nu
\alpha}}{\p x^\beta}+\frac{\p g_{v\, \nu \beta}}{\p
x^\alpha}-\frac{\p g_{v\, \alpha \beta}}{\p x^\nu} ) \, v^\alpha
v^\beta,
\]
where it is understood that $H^\mu(x,0)=0$. It is Lipschitz as
required because $g^{-1}_{(x,v)}=g^{-1}_{(x,\hat{v})}$ depends
continuously on the unit sphere bundle which is compact over compact
subsets of $M$, thus the inverse $g^{\mu \nu}_{(x,v)}$ stays bounded
in relatively compact neighborhoods of points belonging to the zero
section, and combinations of the form $ g^{\mu \nu}_{(x,v)} v^\beta$
are locally Lipschitz everywhere once they are  defined to vanish on
the zero section.

We shall return to the geometry of pseudo-Finsler spaces when we
 discuss Gauss' Lemma. Any mention to the various  connections
that can be introduced in this theory will be avoided in both
results and
 proofs.

%Nevertheless, we shall not assume familiarity in the reader with the
%various  connections that can be introduced in this theory. In fact
%we will be able to recast our results and proofs avoiding their
%mention.

\subsection{The exponential map for sprays}

As we mentioned, the non-stationary solutions to Eqs.\
(\ref{fhs})-(\ref{fha}) will be called {\em geodesics}. As this is a
system of  first order ODE over $TM$, according to the
Picard-Lindel\"of theorem, the existence and uniqueness of its
solutions are guaranteed by the locally Lipschitz condition on
$H^\mu$.

Let $\gamma_v(t)$ be the unique geodesic which starts from $\pi(v)$
with velocity $v$. The set $\Omega$ is given by those $v$ for which
the geodesic exists at least for $t\in [0,1]$. The exponential map
$\exp\colon  \Omega \to M\times M$ is given by
\[
v \mapsto (\pi(v),\gamma_v(1)),
\]
while the pointed exponential map at $p \in M$, is $\exp_p\colon
\Omega_p \to M$, $\Omega_p=\Omega\cap \pi^{-1}(p)$, $\exp_p
v:=\gamma_v(1) =\pi_2(\exp v) $. By the homogeneity of $H^\mu$ on
velocities we have
\begin{equation} \label{onu}
\gamma_{sv}(t)=\gamma_v(st),
\end{equation}
thus the set $\Omega$ (and $\Omega_p$) is star-shaped in the sense
that if $v\in \Omega$ then $s v\in \Omega$ for every $s\in [0,1]$.
Equation (\ref{onu}) clarifies that it make sense to call {\em affine}  the
geodesic parameter, for any affine
reparametrization of a geodesic gives a curve which solves the geodesic equation.

\begin{remark} \label{hyb}
The exponential map of the reverse spray, denoted $\tilde{exp}$ is
\[\tilde{exp} \,v:= (\pi(v),\gamma_{v}(-1)), \qquad \tilde{exp}_p\,
v:=\gamma_{v}(-1),\] and since in general $\gamma_v(-1)\ne
\gamma_{-v}(1)$ this map cannot be simply expressed through the
exponential map $\exp$. Of course, if the spray is reversible it
coincides with $v \mapsto \exp(-v)$.
\end{remark}

%Furthermore, we shall explore some consequences for causality
%theory. Let us first fix some terminology in order to correctly
%formulate our problem.

%Let $M$ be an $n$-dimensional manifold and let $\nabla$ be a
%connection. Unless otherwise stated, we do not assume that $\nabla$
%is compatible with a (pseudo-)Riemannian metric, nor that it is
%torsion free.
%
%Let $x^\mu\colon U\to \mathbb{R}^n$ be a local chart where $U\subset
%M$ is an open set. The  connection is determined by the
%Christofell symbols which are defined through $\nabla_{e_{\alpha}}
%e_{\beta}=\Gamma^\mu_{\alpha \beta} e_{\mu}$, where
%$e_{\mu}=\p_\mu$.
%
%A non-stationary curve $x:[a,b]\to M$ is a geodesic if it satisfies
%$\nabla_{\dot{x}} \dot{x}=0$. Every chart induces a chart $(x^\mu,
%\dot x^\mu)\colon \pi^{-1}(U)\to \mathbb{R}^n\times \mathbb{R}^n$ on
%the tangent bundle $\pi: TM\to M$. The geodesic equation for the
%curve lifted on the tangent bundle reads
%\begin{align}
%\frac{\dd x^{\mu}}{ \dd t}&=\dot{x}^\mu,  \\
%\frac{\dd \dot{x}^\mu}{\dd t}&=-\Gamma^{\mu}_{\alpha \beta}
%\dot{x}^\alpha \dot{x}^\beta .
%\end{align}
%This is an ODE over $TM$ and, according to the Picard-Lindel\"of
%theorem, the existence and uniqueness of its solutions are guaranteed
%by a (locally) Lipschitz condition on $\Gamma^{\mu}_{\alpha
%\beta}$.

Hartman \cite{hartman50} proved that for connections the uniqueness
of the geodesic equation is lost if the Lipschitz condition
 is weakened to continuity \cite{hartman50}. This
result was improved by Hartman and Wintner \cite{hartman51b}
\cite[Exercise 6.2, Chap.\ 5]{hartman64} who considered the metric
\[
\dd s^2=(1+\vert v\vert^{1+\alpha})(\dd u^2+ \dd v^2)
\]
for $0<\alpha<1$. Its connection satisfies an H\"older
condition of exponent $\alpha$, and on any neighborhood of $p=(0,0)$
one can find infinite geodesics which start from $p$ with velocity
$(1,0)$.

These examples suggest the Lipschitz condition as the best
differentiability condition  that can be placed on a
spray.
%\footnote{We refrain from using the term {\em affine
%connection} for the covariant derivative should be better referred
%s {\em linear connection}.}

\begin{remark}
Actually, if the  connection is that of a Riemannian $C^2$ surface
of Euclidean 3-space then uniqueness of geodesics is guaranteed, and one can even
build $C^1$ normal coordinates even though the connection is just
continuous \cite{hartman50}.  Moreover, still in the 2-dimensional
case under Lipschitzness of the connection one can prove results
which are stronger than those considered in this
work\footnote{Please notice that according to \cite{hartman83} claim
III in \cite{hartman51b} is incorrect.}
\cite{hartman50,hartman51b,hartman51c}. Nevertheless, we shall work
in the general $n$-dimensional case since the 2-dimensional one
appears too special and less relevant for applications (it suffices
to recall that in 2-dimensions any metric is locally conformally flat).
\end{remark}

In this work we shall prove that the exponential map of every spray
is a local Lipeomorphism (local bi-Lipschitz homeomorphism) and that
on $M$ any point admits a topological base of convex neighborhoods
(Theor.\ \ref{nsx}).

In a  Riemannian framework, this result can be  improved in some
directions. For instance, it is well known that Riemannian spaces
with sectional curvature bounded from below or above find a
remarkable generalization in the notion of Alexandrov spaces. In this
quite general setting there are indeed results on the existence of
convex neighborhoods \cite[Prop.\ 5.5]{berestovskij93}
\cite{perelman93,perelman93b}.

We  were finishing this work when we learned that Kunzinger,
Steinbauer and Stojkov\'ic, in a recent preprint \cite{kunzinger13},
have also provided a  proof of the bi-Lipschitzness of the pointed
exponential map and of the existence of convex neighborhoods. As with us,
they were motivated by a recent work by  Chrusciel and Grant on
causality theory under low differentiability conditions
\cite{chrusciel12}. Their approach is complementary to our own and
deserves some comments. They consider a net of smooth Riemannian
metrics $g_\epsilon$ obtained from  $g$ through convolution with a
mollifier, and use methods from comparison geometry to obtain
sufficiently strong estimates on the exponential maps of the
regularized metrics, so as to be able to carry over the bi-Lipschitz
property through the limit. In order to perform this last step in
the general pseudo-Riemannian case they use some results on
comparison geometry for indefinite metrics  recently  obtained by
Chen and  LeFloch \cite{chen08}. They also show that the Riemannian
case can be dealt with using the Rauch comparison theorem.

Our approach has  several advantages among which is that of being
tailored to the  results on convexity that we wish to prove. The
differences between our strategy and more classical approaches based
on the smooth category  and  the inverse function theorem are
minimal; there is no use of comparison geometry, nor
is  regularization required. No prior knowledge of Riemannian geometry is actually
needed, for we never use the concept of (sectional) curvature or
Jacobi field. Our results are therefore obtained by improving some
local analytical results, without introducing advanced topics in
differential geometry or touching the very foundations of the theory
under consideration. This is desirable since we are actually
obtaining basic results on local convexity which could be placed at
the very beginning of treatments on differential geometry under low
regularity.
 Our study may be useful, for instance, to understand the limits
of pseudo-Finsler geometry and particularly Lorentzian geometry, for
which a theory of the same generality of Alexandrov's is
missing.\footnote{There are well known difficulties in this
generalization. They are related to the fact that  sectional
curvature bounds imply constant  curvature \cite{harris82,nomizu83}.
These problems could be sidestepped imposing only bounds on the
sectional curvature of timelike planes \cite{harris82}.}

We have also tried to be as a complete as possible. In this way the
reader will be able to refer to the results of this work without the
need of making adjustments  in the attempt of extending the
results herein obtained. For instance, we prove that the {\em
non-pointed} exponential map $\exp$ is a Lipeomorphism from a
neighborhood of the zero section to a neighborhood of the diagonal
on $M\times M$. This result is quite useful in applications, for
instance  in causality theory  it is used in the proof that the
causal relation over  convex normal sets is closed (Theor.\
\ref{bik}).

Unless otherwise specified, $\Vert \, \Vert$ will denote the
Euclidean norm on $\mathbb{R}^n$. Let us recall that a function
$f\colon O \to \mathbb{R}^k$ defined on an open set $O\subset
\mathbb{R}^n$ is Lipschitz if, for some $K>0$ and for every $p,q\in O$,
\[
\Vert g(p)- g(q)\Vert< K \Vert p-q\Vert .
\]
It is {\em locally Lipschitz} if this inequality
holds over every compact subset of $O$, with $K$ dependent on the
compact subset.
 If $f$ depends
on another variable $z$, then $f$ is uniformly Lipschitz if the
Lipschitz constant does not depend on $z$. It is {\em locally
uniformly Lipschitz} if, chosen any compact set on the domain of
$z$, the Lipschitz constant can be chosen to be dependent on just
the compact set rather than $z$. As a consequence, a function of,
say, two variables $f(x,y)$ which is Lipschitz is also
 Lipschitz in one variable uniformly in the other. A  {\em
Lipeomorphism} $f$ is a homeomorphism for which both $f$ and $f^{-1}$ are locally Lipschitz. In the cases that will interest us $f$ will be defined in an open subset $O\subset \mathbb{R}^n$, and $f(O)$ will also be an open subset of $\mathbb{R}^n$.

%
%
%The local Lipschitzness of the spray means that on any chart once
%$x$ and $y$ are limited to a compact subset of $\mathbb{R}^n$, there
%is a constant $K>0$ such that for any vector $e$ with unit norm
%\begin{equation} \label{jua}
%\Vert (H^\mu(y,) -\Gamma^\mu_{\alpha \beta}(x))a^\alpha b^\beta
%\Vert \le K \Vert y-x\Vert
%\end{equation}
%
%connection means that on any coordinate chart, once $x$ and $y$ are
%limited to a compact subset of $\mathbb{R}^n$, there is a constant
%$K>0$ such that for any vectors $a$ and $b$ with unit norm
%\begin{equation} \label{jua}
%\Vert (\Gamma^\mu_{\alpha \beta}(y) -\Gamma^\mu_{\alpha
%\beta}(x))a^\alpha b^\beta \Vert \le K \Vert y-x\Vert
%\end{equation}
%We observe that given the definition of Lipschitzness there is no
%need to remark, as done by other authors, that $\Gamma(x)$ is {\em
%uniformly} Lipschitz, for the adverb uniformly can be dropped.

It is well known  that the ODE $\dot{x}=f(x)$ for which $f$ is
Lipschitz admits unique solutions which have a Lipschitz dependence
on the initial conditions \cite[Ex.\ 1.2, Chap.\ 2]{hartman64}
\cite[Prop.\ 1.10.1]{cartan71} \cite[Cor.\ 1.6]{lang95}. As a
result, the exponential map $\exp$ and its pointed version $\exp_p$
are locally Lipschitz.

We shall improve this  result as in the next theorem. This
refinement will be used in the proof that in a Riemannian
 space the geodesics are locally length minimizing
in  the family of absolutely continuous  curves (and to prove an
analogous result in the Lorentzian case). To increase readability we
 postpone most proofs to the next sections.

\begin{theorem} \label{flo}
Let us consider a Lipschitz spray (Lipschitz $H^\mu$) on a $C^{2,1}$
manifold $M$, and let $\varphi\colon W\to TM$, $W\subset
[0,+\infty)\times  TM$,  $\varphi(t,v):=\gamma_v'(t)$ be the
geodesic flow map defined at those $(t,v)\in \mathbb{R} \times TM$
for which the geodesic $\gamma_v$ extends up to time $t$ (so that
the expression on the right-hand side makes sense).  Then  $W$ is an
open subset of $[0,+\infty)\times  TM$ such that $[0,1]\times \Omega
\subset W$ and such that for every $s\in [0,1]$ if $(t,v) \in W$
then $(st,v),(t,sv)\in W$. Analogously, $\Omega\subset TM$ is open
and star-shaped in the sense that for every $s\in [0,1]$ if $v\in
\Omega$ then $sv\in \Omega$.

Moreover, $\varphi(\cdot, v)$ is $C^{1,1}$, $\varphi$ is locally
Lipschitz and there is a star-shaped subset $\tilde{\Omega}\subset
\Omega$ such that, $\Omega\backslash \tilde{\Omega}$ has zero
Lebesgue measure, and for every $v\in \tilde{\Omega}$ and   every
$t\in [0,1]$, $\varphi(t,\cdot)$ is differentiable at $(t,v)$ and
the differential (Jacobian) $\p_2 \varphi(\cdot, v)$ is locally
Lipschitz in $t$, locally uniformly with respect to those $v$
belonging to $\tilde{\Omega}$ (that is the local Lipschitz constant
can be chosen so that it does not vary in a small neighborhood of
$v$ as long as the independent variable stays in $\tilde\Omega$).
Finally, for any $v\in \tilde{\Omega}$ we have that for almost every
$t\in [0,1]$ the following mixed differentials exist, are locally
bounded  and coincide
\[
\p_1 \p_2 \varphi = \p_2 \p_1 \varphi .
\]
These conclusions do not change if we restrict $\varphi$ to some
Lipschitz $m$-dimensional submanifold $N$ of $TM$.
% typically the
%normal bundle $\nu(S)$ to a $C^{1,1}$ $k$-dimensional immersion
%$\phi:S\to M$, $k\ge 0$.
In this case the differential $\p_2$ refers to the variables of the
Lipschitz chart on $N$ and the almost everywhere existence of $\p_2
\varphi$  must be understood in the $m$-dimensional Lebesgue measure
of $ N$.
\end{theorem}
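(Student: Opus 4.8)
The plan is to reduce the whole statement to facts about the local flow of the first-order field $F(x,v)=(v,H(x,v))$ on $TU$, and then, in place of the classical $C^1$ argument (a Jacobi/variational equation), to use a Rademacher--Fubini argument in which one differentiates the velocity field \emph{along} the flow.

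First I would write $z=(x^\mu,v^\mu)$, so that Eqs.~(\ref{fhs})--(\ref{fha}) read $\dot z=F(z)$ with $F$ locally Lipschitz, and observe that in these coordinates $\varphi(t,v)$ is exactly the forward flow $\Phi_t$ of $F$ evaluated at $v$ (the base point being one of the coordinates of a vector). The classical Picard--Lindel\"of theory for Lipschitz fields then gives that the flow domain is relatively open in $[0,+\infty)\times TM$, that $\Omega=\{v:(1,v)\in W\}$ is open, that $[0,1]\times\Omega\subset W$, and that $\varphi$ is jointly locally Lipschitz (Gr\"onwall for the dependence on $v$; $\vert\partial_t\varphi\vert=\vert F\circ\varphi\vert$ locally bounded for the dependence on $t$). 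The scaling statements $(st,v),(t,sv)\in W$ for $s\in[0,1]$, and the star-shapedness of $\Omega$ and $W$, follow immediately from the homogeneity identity $\gamma_{sv}(t)=\gamma_v(st)$, Eq.~(\ref{onu}). Finally $\partial_t\varphi(\cdot,v)=F\circ\varphi(\cdot,v)$ is continuous, so $\varphi(\cdot,v)$ is $C^1$; being a composition of locally Lipschitz maps it is locally Lipschitz, hence $\varphi(\cdot,v)\in C^{1,1}$.

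The point is not to write a variational (Jacobi) equation, which would require $F$ to be differentiable along the geodesic --- something not available here --- but to differentiate $g:=F\circ\varphi$. As a composition of locally Lipschitz maps $g$ is locally Lipschitz on $[0,1]\times\Omega$, so by Rademacher's theorem it is differentiable a.e.; its non-differentiability set being Lebesgue-null, Fubini shows that for a.e.\ $v\in\Omega$ the partial differential $\partial_2 g(\tau,v)$ exists for a.e.\ $\tau\in[0,1]$. Let $\tilde\Omega_0$ be the set of such $v$. It is star-shaped: writing $\sigma_\lambda(x,v)=(x,\lambda v)$, the homogeneity $\gamma_{\lambda v}(\tau)=\gamma_v(\lambda\tau)$ gives $\varphi(\tau,\cdot)\circ\sigma_\lambda=\sigma_\lambda\circ\varphi(\lambda\tau,\cdot)$, and the degree-two homogeneity of $H$ gives $F\circ\sigma_\lambda=\Lambda_\lambda\circ F$ with $\Lambda_\lambda=\mathrm{diag}(\lambda I,\lambda^2 I)$ a fixed linear isomorphism; composing, $g(\tau,\cdot)\circ\sigma_\lambda=\Lambda_\lambda\circ g(\lambda\tau,\cdot)$, so $g(\tau,\cdot)$ is differentiable at $\lambda v$ iff $g(\lambda\tau,\cdot)$ is differentiable at $v$, whence $v\in\tilde\Omega_0\Rightarrow\lambda v\in\tilde\Omega_0$ for $\lambda\in(0,1]$. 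I would then set $\tilde\Omega:=\tilde\Omega_0\cup(\Omega\cap\{\text{zero section}\})$, which is star-shaped with $\Omega\setminus\tilde\Omega$ Lebesgue-null.

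Now fix $v\in\tilde\Omega$ off the zero section; differentiability of $\varphi(t,\cdot)$ at $v$ \emph{for every} $t\in[0,1]$ is obtained by integrating the a.e.\ information just gathered. Pick a Lipschitz constant $K$ of $F$ on a compact neighbourhood of $\{\varphi(\tau,v):\tau\in[0,1]\}$ and a Lipschitz constant $\Lambda$ of $\varphi$ on $[0,1]\times\overline{B(v,\varepsilon)}$; both may be taken independent of $\tau$ and valid for $v'$ near $v$. From $\varphi(t,v')=v'+\int_0^t F(\varphi(\tau,v'))\,\dd\tau$,
\[
\varphi(t,v+h)-\varphi(t,v)=h+\int_0^t\big[\,F(\varphi(\tau,v+h))-F(\varphi(\tau,v))\,\big]\,\dd\tau .
\]
For a.e.\ $\tau$ the bracket equals $\partial_2 g(\tau,v)\,h+\rho_\tau(h)$ with $\vert\rho_\tau(h)\vert=o(\vert h\vert)$, while always $\vert\rho_\tau(h)\vert\le 2K\Lambda\vert h\vert$ and $\Vert\partial_2 g(\tau,v)\Vert\le K\Lambda$; so by dominated convergence $\int_0^t\rho_\tau(h)\,\dd\tau=o(\vert h\vert)$ uniformly in $t\in[0,1]$ and hence
\[
\varphi(t,v+h)-\varphi(t,v)=\Big(I+\int_0^t\partial_2 g(\tau,v)\,\dd\tau\Big)h+o(\vert h\vert).
\]
Thus $\partial_2\varphi(t,v)=I+\int_0^t\partial_2 g(\tau,v)\,\dd\tau$ exists for all $t\in[0,1]$, and is Lipschitz in $t$ with constant $\le K\Lambda$, a bound uniform for $v$ in a neighbourhood within $\tilde\Omega$. (For zero-section vectors one uses instead the strong differentiability of $\exp$ at the origin, Theorem~3.) Finally $\partial_1\partial_2\varphi(t,v)=\partial_2 g(t,v)$ for a.e.\ $t$, while $\partial_1\varphi(t,v)=F(\varphi(t,v))=g(t,v)$ for all $t$, so $\partial_2\partial_1\varphi(t,v)=\partial_2 g(t,v)$ as well; both are bounded by $K\Lambda$ and coincide. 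For the restriction to a Lipschitz $m$-dimensional submanifold $N$ with Lipschitz chart $\psi\colon V\to N$ (a bi-Lipschitz embedding into $TM$), the same argument applies to $\Xi(t,u):=\varphi(t,\psi(u))$: the map $(t,u)\mapsto F(\Xi(t,u))$ is jointly locally Lipschitz, $\psi$ is differentiable $\mathcal{L}^{m}$-a.e.\ by Rademacher, and Fubini yields a set of $u\in V$ of full $\mathcal{L}^{m}$-measure at which $\psi$ is differentiable and $u'\mapsto F(\Xi(\tau,u'))$ is differentiable for a.e.\ $\tau$; integrating as above gives differentiability of $\Xi(t,\cdot)$ at such $u$ for every $t$, with differential $D\psi(u)+\int_0^t\partial_2(F\circ\Xi)(\tau,u)\,\dd\tau$, and the Lipschitz-in-$t$ and mixed-derivative statements follow verbatim (with $\partial_2$ the chart derivative and ``a.e.''\ meaning $\mathcal{L}^{m}$-a.e.). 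The delicate points --- where I expect the proof to need the most care --- are not analytic depth but bookkeeping: the interchange of differentiation and integration, which rests precisely on the \emph{uniform in $\tau$} Lipschitz bounds supplied by the joint local Lipschitzness of $\varphi$, and the construction of $\tilde\Omega$ so that it is simultaneously full-measure and star-shaped, which is where the homogeneity identities are indispensable.
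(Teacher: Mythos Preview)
Your proposal is correct and follows essentially the same route as the paper. The paper writes the flow in integral form (Eq.~(\ref{bhq})), then packages the key step --- Rademacher plus Fubini plus dominated convergence to differentiate under the integral sign --- as a separate lemma (Theorem~\ref{hua}), whereas you carry out that same argument inline on $g=F\circ\varphi$; your explicit homogeneity computation for the star-shapedness of $\tilde\Omega_0$ is more detailed than the paper's one-line appeal to Eq.~(\ref{onu}), but the content is identical. One small caution: your appeal to Theorem~3 for differentiability at zero-section points is out of logical order (Theorem~3 is proved after Theorem~1 in the paper), but this is harmless since the zero section has measure zero and its inclusion in $\tilde\Omega$ is only needed to close the star-shape at $s=0$, which can be handled directly (the flow is constant there).
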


One would like to prove  that the (pointed) exponential map is
invertible, say a local Lipeomorphism or a local diffeomorphism.
Hartman \cite[Exercise 6.2, Chap.\ 5]{hartman64} \cite{hartman83}
showed in the connection case that the exponential map is actually
$C^1$ and hence a local diffeomorphism provided the connection
admits a continuous exterior derivative. We shall not impose these
additional conditions.

%We shall not impose these additional conditions on the spray and
%indeed we shall not try to prove that the exponential map is $C^1$,
%though this condition would allow one to apply the inverse function
%theorem.  In fact one objective of this work is to show that  the
%exponential map for Lipschitz connections provides a local
%Lipeomorphism.

The local injectivity of the exponential map for Lipschitz
connections was  proved by Whitehead in his paper on the existence
of convex normal neighborhoods \cite{whitehead32}. In
\cite{whitehead33} he observed that the result could be generalized
to sprays. Whitehead uses a theorem by Picard which applies to
boundary value problems of second order ODEs \cite[Cor.\ 4.1, Chap.\
12]{hartman64}. Using the theorem on the invariance of domain one
could infer that the exponential map provides a local homeomorphism,
though from here it does not seems easy to obtain that it admits a
Lipschitz inverse.

% (furthermore this
%strategy, depending on Brower fixed point theorem seems to involve a
%too heavy machinery).

Our approach relies instead on an improved version of the inverse
function theorem due to Leach \cite{leach61} (see also
\cite{cartan71}). This theorem depends on Peano's definition of
strong derivative \cite{peano92} and on the natural corresponding
notion of strong differential studied by Severi. Unfortunately,
Peano's contributions in this direction are, together with many
other accomplishments by the Italian mathematician, little known
\cite{dolecki12}. Nijenhuis' attempt \cite{nijenhuis74} to
popularize Peano's choice and Leach's inversion theorem  passed
essentially unnoticed. Peano's choice provides a better definition
of differential, so good, in fact, that having to choose one  should
probably adopt it in place of  the usual differential in analysis
textbooks. Indeed, the strong differential  leads to stronger and
more elegant results, and seems to corresponds better with
intuition.
%The usual differential should then be referred to as,
%say, Fr\'echet differential, although it was first defined in modern
%form again by Peano \cite{dolecki12}.
 We hope that this study, showing
the usefulness of Peano's strong derivative for the exponential map
will serve to motivate its mention in University courses.

Let us recall Peano's definition of strong differential and its
basic properties. We give a general definition for Banach spaces
although we  shall work on  $\mathbb{R}^k$ for some $k\ge 1$. We
denote with $B(p,r):=\{q: \vert q-p\vert<r \}$ the open ball of
radius $r$ centered at $p$, and with $\bar{B}(p,r):=\{q: \vert
q-p\vert\le r \}$ the closed ball.

\begin{definition}
Let $E$ and $F$ be Banach spaces,  and let $f\colon  O \to F$, be a
function defined on an open set $O\subset E$. The strong
differential of $f$ at $p \in O$ is a bounded linear transformation
$L\colon E \to F$ which approximates changes of $f$ in the sense
that for every $\epsilon > 0$, there is a $\delta >0$ such that if
$\vert q_1-p\vert \le \delta$ and $\vert q_2-p\vert \le \delta$,
then:
\begin{equation} \label{nis}
\vert f(q_1) -f(q_2) - L(q_1 - q_2) \vert \le \epsilon \vert q_1 -
q_2\vert .
\end{equation}
\end{definition}

Clearly, if the strong differential at $p$ exists then it is unique.
If $f$ is strongly differentiable at $p$ then taking $q_2=p$ shows
that it is also Fr\'echet differentiable and that the differentials
so defined coincide.  In the finite dimensional case which will
interest us all norms are equivalent thus the notions of strong
differentiation and  strong differential are independent of the norm
used.

We list some properties of strong differentiation which are easy to
prove \cite{peano92,leach61,leach63,esser64,nijenhuis74}.
\begin{itemize}
\item[(i)] If $f$ is strongly differentiable at $p$ then it satisfies a
Lipschitz condition in a neighborhood of $p$.\footnote{One could ask
whether every Lipschitz function is strongly differentiable almost
everywhere. The answer is negative already for functions defined on
the real line \cite[Sect.\ 14.4.1]{garg98}.}
\item[(ii)] If $f$ is differentiable in a neighborhood of $p$ and the
differential is continuous at $p$ then it is strongly differentiable
at $p$.
\item[(iii)] If $f$ is strongly differentiable over a subset $A\subset E$ then
the strong differential is continuous over $A$ with respect to the
induced topology.
\item[(iv)] If a continuous function $f:U_1\times U_2\to V$ on a product Banach
space  admits strong partial differentials at $p$ (obtained keeping
constant the other variable) then it is strongly differentiable at
$p$ and the total differential has the usual gradient expression in
terms of partial differentials.
\item[(v)] The composition of strongly differentiable functions is strongly differentiable.
\item[(vi)] The mixed partial strong derivatives coincide wherever
they exist \cite{minguzzi13b}.
\item[(vii)] If $f\colon \mathbb{R}\to  \mathbb{R}$ has positive strong
differential at $p$, then $f$ is continuous and increasing in a
neighborhood of $p$.
\end{itemize}

Conditions (ii) and (iii) clarify that a function is $C^1$ over an
open set if and only if it is strongly differentiable over it. Some
key theorems in analysis that require a $C^1$ condition on a
neighborhood of $p$ can be  proved demanding the weaker condition of
strong differentiability at $p$. An example is Leach's inversion
theorem \cite{leach61,cartan71,nijenhuis74,krantz00} which
generalizes Dini's and which we state in a  form suitable for our
purposes:

\begin{theorem}[Leach]
Let $f\colon O \to \mathbb{R}^n$, be a function  defined on an open
subset $O\subset \mathbb{R}^n$,   such that $f$ has strong
differential $L\colon \mathbb{R}^n\to\mathbb{R}^n$ at $p\in O$. If
$L$ is invertible then there are an open neighborhood $N_1$ of $p$,
an open neighborhood $N_2$ of $f(p)$, and a function $g: N_2\to \mathbb{R}^n$
such that, $f(N_1)=N_2$,   $g(N_2)=N_1$, $f\vert_{N_1}$ and $g$ are
one the inverse of the other, they are both Lipschitz  and $g$ has
strong differential $L^{-1}$ at $f(p)$.

Moreover, in this case $f$ is differentiable at $q\in N_1$ if and
only if $g$ is differentiable at $f(q)$,  in which case the
differentials are invertible. This last statement holds also with
{\em differentiable} replaced by {\em strongly
differentiable}.\footnote{The fact that $N_1$ and $N_2$ can be
chosen to be open sets such that both $f\vert_{N_1}$ and $g$ are
Lipschitz follows from Leach's original formulation plus (i). The
statement in the last paragraph is not contained in Leach's original
formulation but can be found in its proof.}
\end{theorem}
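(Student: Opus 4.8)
The plan is to reduce to a normalized situation and then run a Banach fixed-point argument, the crucial observation being that strong differentiability \emph{at a single point} already furnishes a two-sided Lipschitz estimate valid for \emph{every} pair of nearby points, which is exactly what an ordinary Fr\'echet differential fails to provide.

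\textbf{Reduction and contraction estimate.} Composing $f$ on the left with $L^{-1}$ and translating in both domain and codomain by affine maps, I may assume $p=0$, $f(0)=0$ and $L=\mathrm{Id}$; it then suffices to build the inverse $g$ with strong differential $\mathrm{Id}$ at $0$ and transport the conclusions back through the affine changes (the differential then becomes $L^{-1}$). Applying the definition of strong differential with $\epsilon=\tfrac12$ gives a $\delta>0$ such that $\vert f(q_1)-f(q_2)-(q_1-q_2)\vert\le \tfrac12\vert q_1-q_2\vert$ for all $q_1,q_2\in \bar B(0,\delta)$. Set $h(x):=x-f(x)$. The estimate says $h$ is $\tfrac12$-Lipschitz on $\bar B(0,\delta)$, and since $h(0)=0$ it maps $\bar B(0,\delta)$ into $\bar B(0,\delta/2)$. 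From $f=\mathrm{Id}-h$ one gets $\tfrac12\vert q_1-q_2\vert\le \vert f(q_1)-f(q_2)\vert\le\tfrac32\vert q_1-q_2\vert$ on $\bar B(0,\delta)$, so $f$ is injective and Lipschitz there.

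\textbf{Local surjectivity and the inverse.} For $\vert y\vert<\delta/2$ the map $T_y(x):=y+h(x)$ is a $\tfrac12$-contraction of $\bar B(0,\delta)$ into itself, hence has a unique fixed point $g(y)$, which obeys $\vert g(y)\vert\le \vert y\vert+\tfrac12\vert g(y)\vert$, i.e.\ $\vert g(y)\vert\le 2\vert y\vert<\delta$, so $g(y)\in B(0,\delta)$ and $f(g(y))=y$. Taking $N_2:=B(0,\delta/2)$ and $N_1:=f^{-1}(N_2)\cap B(0,\delta)$, both open, one checks $f(N_1)=N_2$, $g(N_2)=N_1$ and that $g$ and $f\vert_{N_1}$ are mutually inverse. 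Writing $g(y_1)-g(y_2)=(y_1-y_2)+\bigl(h(g(y_1))-h(g(y_2))\bigr)$ and using the $\tfrac12$-contraction bound for $h$ yields $\vert g(y_1)-g(y_2)\vert\le 2\vert y_1-y_2\vert$, so $g$ is Lipschitz. For the strong differential of $g$ at $0$: with $x_i=g(y_i)$ one has $g(y_1)-g(y_2)-(y_1-y_2)=x_1-x_2-\bigl(f(x_1)-f(x_2)\bigr)$; bounding the right-hand side by strong differentiability of $f$ at $0$ with an arbitrarily small $\epsilon$, and inserting $\vert x_1-x_2\vert\le 2\vert y_1-y_2\vert$ together with $\vert x_i\vert\le 2\vert y_i\vert$, gives $\vert g(y_1)-g(y_2)-(y_1-y_2)\vert\le 2\epsilon\vert y_1-y_2\vert$ for $y_i$ in a small enough ball. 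Hence $g$ has strong differential $\mathrm{Id}$ at $0$, i.e.\ $L^{-1}$ at $f(p)$ after undoing the normalization.

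\textbf{The ``moreover'' statement.} Suppose $f$ is differentiable (resp.\ strongly differentiable) at $q\in N_1$ with differential $A$. Since $g$ is Lipschitz, $\vert x-q\vert\le 2\vert f(x)-f(q)\vert$ near $q$; plugging a hypothetical nonzero kernel vector of $A$ into the difference quotient of $f$ at $q$ contradicts this lower bound, so $A$ is injective, hence invertible (finite dimension). Writing $f(x_1)-f(x_2)=A(x_1-x_2)+R$ with $\vert R\vert\le\epsilon\vert x_1-x_2\vert$ (the one-point version if $f$ is only Fr\'echet differentiable, taking $x_2=q$), and putting $x_i=g(y_i)$, one obtains $g(y_1)-g(y_2)-A^{-1}(y_1-y_2)=-A^{-1}R$, bounded by $2\Vert A^{-1}\Vert\,\epsilon\,\vert y_1-y_2\vert$, which shows $g$ is (strongly) differentiable at $f(q)$ with differential $A^{-1}$; the converse is symmetric, and invertibility of the differentials has just been established.

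\textbf{Main obstacle.} The only genuinely delicate point is conceptual rather than computational: the entire argument must exploit strong differentiability at the \emph{single} point $p$ — the $\epsilon=\tfrac12$ estimate is needed for \emph{all} pairs $q_1,q_2$ near $p$, not merely for pairs with one endpoint at $p$ — since no $C^1$ hypothesis on a neighborhood is available. This is precisely the place where Peano's strong derivative does the work; once that estimate is in hand, everything reduces to standard contraction-mapping bookkeeping.
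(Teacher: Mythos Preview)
Your proof is correct and follows the standard Banach fixed-point strategy one finds in Leach's original paper and in Nijenhuis' exposition. Note, however, that the paper does \emph{not} supply its own proof of this theorem: it is quoted as a known result, with a footnote indicating that the Lipschitz and openness refinements follow from Leach's formulation together with property (i), and that the ``moreover'' clause is implicit in Leach's proof. So there is no proof in the paper to compare against; your argument simply fills in what the paper takes as a black box, and does so in essentially the same way Leach did.
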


%\begin{remark} \label{jws}
%By (i) we can choose $N_1$ and $N_2$ to be open sets such that both
%$f\vert_{N_1}$ and $g$ are Lipschitz.
%\end{remark}

In order to clarify the connection between this inversion theorem
and Clarke's \cite{clarke76b} it is convenient to recall the notion
of Clarke's generalized differential for locally Lipschitz
functions:

\begin{definition}
The generalized Jacobian of a locally Lipschitz function $f\colon O
\to \mathbb{R}^n$, $O\subset \mathbb{R}^k$ at $p$, denoted $\p
f(p)$, is the convex hull of all matrices $M$ of the form
\[
M = \lim_{p_i\to p}  df(p_i)
\]
where $p_i$  converges to $p$, $f$ is differentiable at $p_i$ for
each $i$ and $d f$ denotes the usual Jacobian.
\end{definition}

%If a function $f$ is strongly differentiable at $p$, it is locally
%Lipschitz in a neighborhood of $p$ and hence it is there almost
%everywhere differentiable by Rademacher's theorem. Certainly

By Rademacher's theorem the generalized differential is non-empty at
$p$ and we have (see also \cite{esser64,clarke76b})

\begin{proposition}
If  $f\colon O \to \mathbb{R}^n$, $O\subset \mathbb{R}^k$, is
strongly differentiable at $p$ then $\p f(p)=\{\dd f(p) \}$.
\end{proposition}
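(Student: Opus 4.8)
The plan is to prove the apparently stronger fact that \emph{every} matrix entering the definition of $\p f(p)$ already equals the Fr\'echet differential $\dd f(p)$, whose existence is ensured by the observation (made right after the definition of strong differential) that strong differentiability at $p$ implies Fr\'echet differentiability at $p$ with the same differential. Write $L:=\dd f(p)$.

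The heart of the argument is a uniform estimate on the pointwise differentials at nearby points of differentiability: for every $\epsilon>0$ there is $\delta>0$ such that whenever $f$ is (Fr\'echet) differentiable at a point $q$ with $\vert q-p\vert\le \delta/2$, one has $\Vert \dd f(q)-L\Vert\le \epsilon$ in operator norm. To see this, pick the $\delta$ associated with $\epsilon$ in the definition of the strong differential. Given such a $q$ and a unit vector $u$, apply the strong differentiability inequality \eqref{nis} with $q_1=q+hu$ and $q_2=q$ for $0<h\le\delta/2$, so that $q_1,q_2\in\bar{B}(p,\delta)$; dividing $\vert f(q+hu)-f(q)-L(hu)\vert\le\epsilon h$ by $h$ and letting $h\to 0^+$ gives $\vert \dd f(q)u-Lu\vert\le\epsilon$. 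Since $u$ is an arbitrary unit vector, $\Vert \dd f(q)-L\Vert\le\epsilon$.

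The conclusion is then immediate. Let $M=\lim_{p_i\to p}\dd f(p_i)$ be any of the matrices appearing in the definition of $\p f(p)$. For $i$ large enough $\vert p_i-p\vert\le\delta/2$, hence $\Vert\dd f(p_i)-L\Vert\le\epsilon$, and passing to the limit $\Vert M-L\Vert\le\epsilon$; since $\epsilon>0$ is arbitrary, $M=L$. By property (i), $f$ is Lipschitz near $p$, so by Rademacher's theorem it is differentiable on a set of full measure near $p$ and at least one such $M$ exists; therefore the set of these limit matrices equals $\{L\}$, and so does its convex hull, i.e.\ $\p f(p)=\{\dd f(p)\}$. The only point needing a little care is the restriction $0<h\le\delta/2$, which keeps $q+hu$ inside $\bar{B}(p,\delta)$ when one passes to the limit defining $\dd f(q)u$; this is why the intermediate estimate is phrased on $\bar{B}(p,\delta/2)$ rather than on $\bar{B}(p,\delta)$, and beyond this there is no real obstacle.
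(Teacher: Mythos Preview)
Your proof is correct and follows essentially the same approach as the paper's: both arguments fix $\epsilon>0$, take the $\delta$ from strong differentiability, and show that for any point $q\in\bar B(p,\delta/2)$ where $f$ is differentiable one has $\Vert \dd f(q)-L\Vert$ controlled by $\epsilon$, then pass to the limit along the sequence $p_i$. The only cosmetic difference is that the paper picks a specific increment $\delta_i e$ at $p_i$ and uses a triangle inequality (yielding $2\epsilon$), whereas you let $h\to 0^+$ directly in the strong-differentiability estimate (yielding $\epsilon$); both are equivalent since $\epsilon$ is arbitrary.
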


\begin{proof}
Indeed, take any $\epsilon>0$ and let $\delta>0$ be such that Eq.\
(\ref{nis}) holds. Let $J$ be the limit of a sequence $J_i=d f(p_i)$
for $p_i\to p$. We can assume $\Vert p_i-p\Vert<\delta/2$ for each
$i$. Let $e$ be any normalized unit vector. For each $i$ we can find
some $0<\delta_i\le \delta/2$ such that
\[
\Vert f(q_i)-f(p_i)-d f(p_i)(q_i-p_i)\Vert \le \epsilon \Vert
q_i-p_i\Vert
\]
for every $q_i$ such that $\Vert q_i-p_i\Vert\le\delta_i$. Let
$q_i=p_i+\delta_i e$, then
\begin{align*}
\delta_i \Vert (L-J_i) e\Vert &=\Vert (L-J_i)(q_i-p_i)\Vert\le \Vert
f(q_i)-f(p_i)-J_i(q_i-p_i)\Vert\\
&\quad + \Vert f(q_i)-f(p_i)-L(q_i-p_i)\Vert \le 2\epsilon \Vert
q_i-p_i\Vert=2\epsilon \delta_i.
\end{align*}
Simplifying $\delta_i$, taking the limit $i\to \infty$ and using the
arbitrariness of $\epsilon$ and $e$ proves that $J=d f(p)$ and hence
$\p f(p)=\{d f(p)\}$.
\end{proof}

Clarke proved that if  $k=n$ and $\p f$ admits only invertible
elements then $f$ is a local Lipeomorphism.  Leach's version states
something more for $f$ strongly differentiable at $p$, for  it
establishes that the inverse is strongly differentiable at $f(p)$.

Our strategy is then clear: we are going to prove the strong
differentiability of the exponential map in order to deduce  the
Lipschitzness of the inverse by means of Leach's (or Clarke's) inversion
theorem. The proof of the strong differentiability of the
exponential map will pass through a local analysis based on   the
Picard-Lindel\"of approximation method.

In the end we shall prove:

\begin{theorem} \label{jfd} Let  $M$ be a $C^{2,1}$-manifold endowed with a locally Lipschitz
spray.
\begin{itemize}
\item[$(\exp)$]
The set $\Omega$ is open in the topology of $TM$. The exponential
map $\exp\colon \Omega \to M\times M$, $\Omega \subset TM$, is
locally Lipschitz.

% and $\frac{\dd }{\dd t}  \exp(tv): [0,1]\times \Omega \to TM$ exists
%wherever $tv  \in \Omega$ and is %locally Lipschitz in both $t$ and  $v$.

Moreover, $\exp$ is strongly differentiable
over the zero section, namely over the image of $p\mapsto 0_p$.
The map $\exp$ provides a Lipeomorphism between an open star-shaped
neighborhood of the zero section and an open neighborhood of the
diagonal of $M\times M$.
\item[$(\exp_p)$]  For every $p\in M$ the set $\Omega_p$ is open in the topology of
$T_pM$. The pointed exponential map  $\exp_p\colon \Omega_p \to M$,
$\Omega \subset T_pM$, is locally Lipschitz and strongly
differentiable at the origin. The map $\exp_p$ provides a local
Liperomorphism from a star-shaped open subset of $\Omega_p$ and an
open neighborhood of $p$ (for more see Theorems \ref{flo} and
\ref{nsx}).

%and  $\frac{\dd }{\dd t} \exp_p(tv): I\times \Omega_p \to TM$,
%$[0,1]\subset I$, exists wherever $tv  \in \Omega_p$ and is locally
%Lipschitz in both $t$ and $v$.

%Moreover, $\exp_p$  is strongly
%differentiable at the origin (for more see Theorem \ref{nsx}).

%\item[$\exp_{\nu(S)}$] Let $\phi:S\to M$, $k\ge 1$ be a $C^{1,1}$ $k$-dimensional
%immersion, and let $\nu(S)$ be the corresponding Lipschitz normal
%bundle.
\end{itemize}
\end{theorem}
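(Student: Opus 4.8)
The plan is to split the statement into the part that is soft and the part that carries the new content. The openness of $W$, $\Omega$, $\Omega_p$ and the local Lipschitzness of $\exp$ and $\exp_p$ are exactly the classical statements about the flow of the ODE $(\ref{fhs})$--$(\ref{fha})$ with locally Lipschitz right-hand side, already recorded in Theorem \ref{flo} and the references cited after it; the star-shapedness of $\Omega$ and $\Omega_p$ is immediate from the homogeneity identity $(\ref{onu})$. So everything reduces to local invertibility with locally Lipschitz inverse, and, along the route advertised above, this will follow from Leach's inversion theorem once we know that $\exp$ is \emph{strongly} differentiable along the zero section and $\exp_p$ at the origin, with \emph{invertible} strong differential.

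For the strong differential, fix $p$ and a chart around $p$, with induced coordinates $(x^\mu,v^\mu)$ on $TM$, and write the geodesic $\gamma_v$ (where $x^\mu$ are the coordinates of $\pi(v)$) in Volterra (Picard--Lindel\"of) form,
\[
\gamma_v^\mu(s)=x^\mu+s\,v^\mu+\int_0^s(s-\tau)\,H^\mu\big(\gamma_v(\tau),\dot\gamma_v(\tau)\big)\,\dd\tau ,\qquad
\dot\gamma_v^\mu(s)=v^\mu+\int_0^s H^\mu\big(\gamma_v(\tau),\dot\gamma_v(\tau)\big)\,\dd\tau .
\]
The second-degree homogeneity of $H^\mu$ together with its Lipschitz character — exploited exactly as in Section \ref{hbh}, passing through the unit sphere bundle of an auxiliary Riemannian metric $h$, compact over compacta — gives, on a fixed relatively compact neighborhood of $0_p$ in $TM$, the bounds $|H^\mu(x,w)|\le C\Vert w\Vert^2$ and, for $\Vert w_1\Vert,\Vert w_2\Vert\le\rho$,
\[
\big|H^\mu(x_1,w_1)-H^\mu(x_2,w_2)\big|\le C\rho\,\big(\Vert x_1-x_2\Vert+\Vert w_1-w_2\Vert\big).
\]
A bootstrap on the first Volterra identity then shows $\Vert\gamma_v(s)-x\Vert+\Vert\dot\gamma_v(s)-v\Vert=O(\rho^2)$ uniformly on $[0,1]$ when $\Vert v\Vert\le\rho$ and $\rho$ is small, and a Gronwall comparison of two such geodesics gives $\Vert\gamma_{v_1}(s)-\gamma_{v_2}(s)\Vert+\Vert\dot\gamma_{v_1}(s)-\dot\gamma_{v_2}(s)\Vert\le C'\big(\Vert x_1-x_2\Vert+\Vert v_1-v_2\Vert\big)$ with $C'$ independent of the small $v_1,v_2$. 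Substituting these into the difference of the Volterra remainders at $s=1$ yields, with $u_i=(x_i,v_i)$ and $\rho=\max(\Vert v_1\Vert,\Vert v_2\Vert)$,
\[
\Big\Vert\gamma_{u_1}(1)-\gamma_{u_2}(1)-\big[(x_1-x_2)+(v_1-v_2)\big]\Big\Vert\le C''\rho\,\big(\Vert x_1-x_2\Vert+\Vert v_1-v_2\Vert\big);
\]
since $\pi_1\exp u_1-\pi_1\exp u_2=x_1-x_2$ exactly and $\rho\to0$ as $u_1,u_2\to 0_p$, this is the strong differentiability of $\exp$ at $0_p$ with strong differential the block matrix $\left(\begin{smallmatrix}I&0\\ I&I\end{smallmatrix}\right)$; restricting to the fibre over $p$ gives the strong differentiability of $\exp_p$ at $0$ with strong differential $I$. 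Both are invertible. (This is where ``Picard--Lindel\"of approximation'' enters: one may equally run the estimates on the iterates $\gamma^{(0)}_v(s)=x+sv$, $\gamma^{(k+1)}_v$ obtained from the Volterra map, and their differences.)

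Leach's inversion theorem, applied at every $0_p$, then gives open neighborhoods $N_1(p)\ni0_p$ in $TM$ and $N_2(p)\ni(p,p)$ in $M\times M$ with $\exp\colon N_1(p)\to N_2(p)$ a Lipeomorphism whose inverse is strongly differentiable at $(p,p)$; likewise $\exp_p$ is a local Lipeomorphism near $0\in T_pM$, and replacing its domain by $\{v:sv\in N_1(p)\cap\pi^{-1}(p)\ \text{for all}\ s\in[0,1]\}$ — still open and still a neighborhood of $0$, since each segment $[0,v]$ is compact — yields the star-shaped open source set required in $(\exp_p)$. For $(\exp)$ it remains to trade the pointwise statements for a single neighborhood of the whole zero section. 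Each $N_1(p)$ contains a tube $\{v:\pi(v)\in V_p,\ \Vert v\Vert_h<\epsilon_p\}$ on which $\exp$ is injective, and a standard paracompactness (shrinking/partition-of-unity) argument produces a continuous $\epsilon\colon M\to(0,\infty)$ with $\epsilon(x)<\epsilon_p$ whenever $x\in V_p$; then $\exp$ is injective on $U:=\{v:\Vert v\Vert_h<\epsilon(\pi(v))\}$, because $\exp v=\exp w$ forces $\pi(v)=\pi(w)=:x$ and any such pair lies in the tube over some $V_p\ni x$. On $U$ the map $\exp$ is injective, locally Lipschitz, and a local homeomorphism, hence open, hence a homeomorphism onto the open set $\exp(U)\supset\Delta_M$, with locally Lipschitz inverse (near each point it agrees with a Leach inverse); replacing $U$ by $\{v\in U:sv\in U\ \text{for all}\ s\in[0,1]\}$ makes it star-shaped without losing the property of being a neighborhood of the zero section, and $(\exp)$ follows.

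I expect the crux, and the only genuine obstacle, to be the strong-differentiability step, and within it the \emph{uniformity} of constants: the rescaled Lipschitz estimate for $H^\mu$ must hold with a constant that does not degenerate as one approaches the zero section — hence the detour through the unit sphere bundle of $h$ and its compactness over compacta — and the Gronwall constant $C'$ must be independent of the vanishing velocities $v_1,v_2$, so that the remainder behaves like $o(\Vert u_1-u_2\Vert)$ near $0_p$ rather than merely $O(\,\cdot\,)$. The upgrade from the pointwise Leach conclusions to a neighborhood of the entire zero section is the other point needing care, but it is bookkeeping of a familiar kind.
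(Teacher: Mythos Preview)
Your argument is correct, and the core analytic step matches the paper's closely: both obtain the strong differential $\left(\begin{smallmatrix}I&0\\ I&I\end{smallmatrix}\right)$ by showing the remainder in the Volterra/Picard representation is bounded by (a constant times) $\rho\cdot\Vert u_1-u_2\Vert$, with $\rho$ controlled by the size of the velocities and hence vanishing as $u_1,u_2\to 0_p$. The paper runs this through explicit induction on the Picard iterates and arrives at bounds of the form $(e^{Dt}-1)/D - t$ with $D=D(\delta)\to 0$; your Gronwall phrasing is equivalent and slightly more compact. (Incidentally, the rescaled Lipschitz estimate for $H$ actually yields $\alpha V^2$ in the base direction and $\beta V$ in the fibre direction, so your single factor $C\rho$ is a harmless majorization for small $\rho$.)

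Where you genuinely diverge from the paper is in the passage from the pointwise Leach conclusions to a single star-shaped neighborhood of the zero section. You exploit the fact that $\pi_1\circ\exp=\pi$, so $\exp v=\exp w$ forces $v,w$ into the \emph{same fibre}, which reduces global injectivity to fibrewise injectivity and lets a continuous radius function $\epsilon(\cdot)$ do the job directly; note that your $U=\{\Vert v\Vert_h<\epsilon(\pi(v))\}$ is already star-shaped, so the final replacement step is superfluous. The paper instead first establishes convex normal neighborhoods $C_p$ (Theorem~\ref{nsx}), on each of which $\exp$ gives a Lipeomorphism onto $C_p\times C_p$, and then patches these together via an exhaustion by compacta $K_i$ and a contradiction/limit argument to produce the global $E$. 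Your route is more direct for the statement at hand; the paper's detour through Theorem~\ref{nsx} is natural because those convex normal sets are needed throughout the rest of the work anyway.
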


In the pointed case this result can be refined. We shall need some
definitions.

\begin{definition}
An open neighborhood $N$ of $p\in M$ will be called %($C^{0,1}$-)
{\em
normal} if there is an open star-shaped subset $N_p\subset \Omega_p$
such that $\exp_p\colon N_p \to {N}$ is a Lipeomorphism.
\end{definition}

%One can give similar definition using $C^k$ or $C^{k,1}$ maps. The
%open case is of course $k=0$ since in the other cases the usual
%inverse function theorem applies.

\begin{definition}
An open set $C\subset M$ will be called {\em convex normal} if it is
a normal neighborhood of each of its points. We shall say that
$\bar{C}$ is {\em strictly convex normal} if $C$ is convex normal
and any two points of $\bar{C}$ are connected by a unique geodesic
contained in $C$ but for the endpoints. A (strictly) convex normal
set is caller {\em reversible} if it is so also for the reverse
spray.
\end{definition}

\begin{remark}
In a reversible convex normal subset $C$ for  any two points $p,
q\in C$ there is a geodesic $\gamma_{pq}\colon [0,1]\to C$
connecting $p$ to $q$ and a geodesic $\gamma_{qp}\colon [0,1]\to C$
connecting $q$ to $p$. They coincide if the spray is reversible.
Furthermore, there are geodesics for the reverse spray
$\tilde\gamma_{pq}(t)=\gamma_{qp}(1-t)$ connecting $p$ to $q$ and
$\tilde \gamma_{qp}(t)=\gamma_{pq}(1-t)$ connecting $q$ to $p$. The
last identities follow from the uniqueness of the connecting
geodesic for the  spray. Observe that while the convexity of $C$
with respect to the spray implies the convexity of $C$ with respect
to the reverse spray, a {\em reversible convex normal set} has a
stronger property which cannot be deduced from the corresponding
property for the spray, that is, in a  {\em reversible convex normal
set} $C$ we have that $\tilde\exp_p^{-1}\vert_C$ is a {\em
Lipeomorphism} for every $p \in C$.
\end{remark}
The following concept will be useful in the next section.
\begin{definition}
Let $C$ be a convex normal set, let $p,q\in C$ and let $x: [0,1]\to
C$, $x(0)=p$, $x(1)=q$, be the unique geodesic connecting them. The
vector $\dot{x}(1)$ is denoted $P(p,q)$ and called {\em position
vector}.
\end{definition}

We shall prove:

\begin{theorem} \label{nsx}
Let  $M$ be a $C^{2,1}$-manifold endowed with a locally Lipschitz
spray. Let $O$ be an open neighborhood of $p\in M$.
 Then there is a reversible strictly convex  normal neighborhood $C$ of $p$
contained in $O$, such that $\exp$ establishes a Lipeomorphism
between an open star-shaped subset of $TC$ and $C\times C$.
Analogously, $\tilde{exp}$ establishes a Lipeomorphism between an
open star-shaped subset of $TC$ and $C\times C$.

Moreover, for every chart $\{x^\mu\}$ defined in a neighborhood of
$p$, $C$ can be chosen equal to the open
ball $B(p,\delta)$ for any sufficiently small $\delta$  (the ball is
defined through  the Euclidean norm induced by the coordinates).
\end{theorem}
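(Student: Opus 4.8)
The plan is to combine the \emph{non-pointed} exponential Lipeomorphism of Theorem~\ref{jfd} with a Whitehead-type convexity argument performed directly in the given chart, the only point of care being that the spray is merely Lipschitz. Fix a chart $\{x^\mu\}$ around $p$ and identify a neighborhood of $p$ with a Euclidean ball. Since $H^\mu$ is positively homogeneous of degree two in the velocities and bounded on the unit sphere bundle (which is compact over compacta), there is a constant $\tilde C$ with $|H^\mu(x,v)|\le \tilde C\|v\|^2$ on $B(p,\delta_0)\times\mathbb{R}^n$. Shrinking $\delta_0$ we may assume $\delta_0$ so small that the ensuing estimates close, and, by Theorem~\ref{jfd}, that $\exp$ restricts to a Lipeomorphism from a star-shaped ``tube'' $\mathcal U\supset\{v:\pi(v)\in B(p,\delta_0),\ \|v\|<\rho_0\}$ onto an open neighborhood $\mathcal V$ of the diagonal; $\exp^{-1}$ is then continuous there, with $\exp^{-1}(p,p)=0_p$.

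\textbf{The convexity estimate.} Along a geodesic one has $\dot v^\mu=H^\mu(x,v)$ with $H^\mu$ Lipschitz and $(x(t),v(t))$ of class $C^1$, so $\dot v^\mu$ is Lipschitz in $t$ and hence every geodesic is $C^{2,1}$. Thus for a geodesic $\gamma$ with image in $B(p,\delta_0)$ and any fixed $q$ the function $f(t):=\|x(\gamma(t))-x(q)\|^2$ is $C^{2,1}$, and using $\ddot x^\mu=H^\mu(x,\dot x)$,
\[
f''(t)=2\|\dot x(t)\|^2+2\sum_\mu\big(x^\mu(\gamma(t))-x^\mu(q)\big)H^\mu(x,\dot x)\ \ge\ 2\big(1-c\,\delta_0\big)\|\dot x(t)\|^2 ,
\]
with $c$ depending only on $n,\tilde C$. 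Hence for $\delta_0$ small $f$ is convex, and strictly convex whenever $\gamma$ is non-constant (a non-constant geodesic has nowhere vanishing velocity, by ODE uniqueness); in particular $f$ attains its maximum only at the endpoints of $[0,1]$.

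\textbf{Producing the neighborhood.} By continuity of $\exp^{-1}$ near the diagonal choose $\delta\le\delta_0$ so small that $\overline B(p,\delta)\times\overline B(p,\delta)\subset\mathcal V$ and $\exp^{-1}\big(\overline B(p,\delta)\times\overline B(p,\delta)\big)\subset\mathcal U$. For $q_1,q_2\in B(p,\delta)$ put $v=\exp^{-1}(q_1,q_2)$ and $\gamma=\gamma_v$; since $sv\in\mathcal U$ (star-shapedness of $\mathcal U$) the whole curve $\gamma$ has image in $B(p,\delta_0)$, so applying the estimate with $q=p$ shows $t\mapsto\|x(\gamma(t))-x(p)\|^2$ is convex with endpoint values $<\delta^2$, whence $\gamma([0,1])\subset B(p,\delta)$. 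This proves $B(p,\delta)$ is geodesically convex; it also makes $\exp_q^{-1}(B(p,\delta))\cap\mathcal U_q$ star-shaped for every $q\in B(p,\delta)$, so $\exp_q$ restricts to a Lipeomorphism onto $B(p,\delta)$ (Lipschitz and injective by Theorem~\ref{jfd}, surjective by the above), i.e.\ $B(p,\delta)$ is convex normal; uniqueness of the connecting geodesic \emph{inside} $\overline B(p,\delta)$ (for the ``strictly'' part, with boundary points handled by the same computation on $\overline B(p,\delta)$) follows once one knows that a geodesic confined to a small enough ball has initial velocity in the injectivity tube $\mathcal U$. Finally $\exp^{-1}(B(p,\delta)\times B(p,\delta))\cap\mathcal U$ is open, contained in $TB(p,\delta)$, star-shaped (once more by the convexity of $f$), and carried Lipeomorphically onto $B(p,\delta)\times B(p,\delta)$.

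\textbf{Reversibility and the main obstacle.} The reverse spray $\tilde H^\mu(x,v):=H^\mu(x,-v)$ is again a Lipschitz spray, so the whole construction applies to it verbatim; taking $\delta$ to be the smaller of the two thresholds makes $C:=B(p,\delta)$ strictly convex normal for both sprays, hence reversible, and yields the stated Lipeomorphism for $\tilde{exp}$ as well. The step I expect to be the real work is the a~priori velocity bound invoked above: that there is $\rho_0>0$ with $\|\dot\gamma(0)\|\le\rho_0$ for every geodesic whose image lies in a sufficiently small coordinate ball. This is a Gr\"onwall-type argument: the quadratic bound $|\ddot x^\mu|\le\tilde C\|\dot x\|^2$ forces $\big|\tfrac{d}{dt}\|\dot x\|^{-1}\big|\le c'$, so a geodesic with large initial speed keeps a comparable speed and nearly constant direction for a short time and therefore escapes any fixed small ball before $t=1$, contradicting its confinement. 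This bound, which uses only the homogeneity of $H^\mu$ and its boundedness on the sphere bundle (both available at merely Lipschitz regularity), together with the fact that Theorems~\ref{flo} and~\ref{jfd} were engineered to hold in the Lipschitz category, is exactly where the low differentiability enters; in the classical $C^2$ setting the argument is the same up to cosmetic changes.
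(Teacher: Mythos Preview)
Your argument is essentially the paper's: invoke the non-pointed Lipeomorphism of Theorem~\ref{jfd} near the diagonal, then run Whitehead's second-derivative test in the chart,
\[
\tfrac{d^2}{dt^2}\|x(t)\|^2=2\big(\|\dot x\|^2+x\cdot H(x,\dot x)\big)\ge 2(1-\delta M)\|\dot x\|^2>0,
\]
to force geodesic convexity of small coordinate balls, and treat reversibility by repeating the computation for $\tilde H^\mu(x,v)=H^\mu(x,-v)$. The paper gets the requisite velocity control from the Picard--Lindel\"of bound $\|\dot x(t)\|\le\delta/(1-\delta M)$ of Eq.~(\ref{pki}) together with continuity of $\exp^{-1}$ at the diagonal, rather than from your Gr\"onwall escape argument on $\|\dot x\|^{-1}$; the two are interchangeable, and your explicit flagging of this step as what forces any geodesic confined to the ball to have its initial velocity in the injectivity tube (hence uniqueness, hence the ``strictly'' in strictly convex normal) is a point the paper's Section~\ref{vko} passes over briskly.
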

% and for every
%$q_1,q_2,q_1',q_2',q\in C$ we have, interpreting the minus sign as
%that given by the affine structure induced by the coordinate chart
%\begin{align}
%&\Vert [P(q'_1,q'_2)-(q'_2-q'_1)]-[P(q_1,q_2)-(q_2-q_1)]\Vert\le
%\epsilon \max\{\Vert q_1'-q_1\Vert, \Vert q_2'-q_2\Vert\}, \nonumber \\
%&\Vert P(q,q_2)-P(q,q_1)-(q_2-q_1)\Vert\le \epsilon \Vert
%q_2-q_1\Vert,  \label{ksc} \\
%&\Vert P(q_1,q_2)-(q_2-q_1)\Vert\le \epsilon \Vert q_2-q_1\Vert, \label{ksd}  \\
%& \Vert P(q_1,q_2)\Vert \le \epsilon. \label{kse}
%\end{align}
%In the connection case, if the coordinate chart is chosen so as to
%make $\Gamma^\mu_{\alpha \beta}(p)$ vanish, then $\delta$ can be
%also chosen such that
%\begin{equation}
%\Vert P(q_1,q_2)+P(q_2,q_1)\Vert\le \epsilon \Vert q_2-q_1\Vert^2.
%\label{ksf}
%\end{equation}

If the spray  is compatible with a pseudo-Finsler structure then
this result can be further refined.

\begin{remark} \label{djg}
%Although we could state Theorem \ref{pwa} in the general
%pseudo-Riemannian case, it seems convenient, in order to simplify
%notations and proofs, to give just the Riemannian and Lorentzian
%versions which are the most interesting in applications. One could
% refine them further, for instance considering analogous statements
%for spacelike curves in the Lorentzian case.
The previous theorems can be formulated in more generality for
$C^{k,1}$ sprays over $C^{k+2,1}$ manifolds with $k\ge 0$ or for
$C^{k+1, \alpha}$ sprays over $C^{k+3,\alpha}$, $\alpha \in [0,1)$,
manifolds. The exponential map and its inverse have the   degree of
differentiability of the spray.

These cases follow more or less straightforwardly  from the
Lipschitz $k=0$ case treated here or from what is already known for
$C^1$ sprays. For a direct  proof that generalizes that for the
Lipschitz case see Remark \ref{djh}.
\end{remark}

\subsection{Gauss' Lemma for pseudo-Finsler sprays}

%Let $\nabla$ be the Levi-Civita connection of a pseudo-Riemannian
%$C^{1,1}$ metric $g$.

Let us consider again a pseudo-Finsler geometry in which the
fundamental tensor $g_v$ is $C^{1,1}$ and the spray is Lipschitz.
This means that in the pseudo-Riemannian case $g$ is $C^{1,1}$ and
the connection is Lipschitz.

The local length minimization property of geodesics in Riemannian
spaces, or the local Lorentzian length maximization property  of
causal geodesics in Lorentzian manifolds, are proved passing through
Gauss' Lemma  (in the Lorentzian case see  \cite{oneill83,chrusciel11}).
%senovilla97
This Lemma is known to hold in Finsler geometry
\cite{bao00}

\begin{quote}(Gauss' Lemma under sufficient differentiability conditions)\\
Let $p\in M$, let $N$ be a normal neighborhood of $p$ and let $v\in
\exp_p^{-1} N\backslash 0$. Let $w\in T_p M\sim T_v(T_p M)$. Then
\begin{equation} \label{juw}
g_{(d \exp_p)_v v}( (d \exp_p)_v v, (d \exp_p)_v w )=g_v(v,w).
\end{equation}
\end{quote}

%By equation (\ref{njr})  the right-hand side vanishes if and only if
%$w$ is tangent at $v$ to the hypersurface of $T_pM$, $u:
%L(p,u)=L(p,v)$. The left-hand side vanishes if and only if $(d
%\exp_p)_v w$

This lemma is usually expressed as above using the push forward of
the exponential map \cite[Theor.\ 3.70]{gallot87} \cite{docarmo92}.
As the exponential map is $C^1$ for $C^2$ metrics, one expects this
lemma to be valid for $C^2$ metrics.

The just mentioned classical proofs of Gauss' lemma work indeed in the $C^2$ case. Without entering in too many  details the reader should just keep in mind that for what concerns differentiability with respect to the initial conditions the exponential map, by Peano's theorem \cite[Theor.\ 3.1]{hartman64},  behaves better in the radial direction than in the transverse directions. As a consequence, the mixed derivative of the expression $f(t,s)=\exp_p (t v(s))$ is continuous if $v(s)$ is $C^1$, cf.\ \cite[Cor.\ 3.2]{hartman64}. Thus one can use Schwarz theorem and  fully justify the proof of \cite{docarmo92}.

Other proofs seem less convincing \cite[Cor.\ 2.2]{senovilla97}. It is very easy to forget that one cannot work directly with, say, derivatives of vector or  tensor fields expressed in a
normal coordinate chart or the Christoffel symbols in normal
coordinates,
indeed, since this chart is
just $C^1$, vector and tensor fields on them can be at most $C^0$
and hence are not differentiable and similarly the Christoffel symbols
are not defined \cite{deturck81}.

%Actually, already in the Riemannian case all the proofs of which I
%am aware, including those just mentioned,  make assumptions which
%can be justified only under $C^3$ differentiability  of the metric.

%Other proofs  at a closed scrutiny seem to require $C^3$ differentiability of the metric.
%Most proofs, for instance, involve covariant derivatives of vectors
%fields obtained pushing forward other vector fields through $d
%\exp_{p}$ (e.g.\ \cite[Theor.\ 3.70]{gallot87}, \cite{docarmo92}).
%However, a $C^2$ metric assumption would imply that those vector
%fields are just $C^0$, and hence not differentiable.
%Some proofs
%For instance, they might involve derivatives of vector or  tensor fields expressed in a
%normal coordinate chart or the Christoffel symbols in normal
%coordinates
% \cite[Cor.\ 2.2]{senovilla97},
%but since this chart is
%just $C^1$, vector and tensor fields on them can be at most $C^0$
%and hence are not differentiable and similarly the Christoffel symbols
%are not defined. More subtle problems involve the interchange of the
%order of two covariant derivatives (e.g.\ \cite{chrusciel11}). Here
%I just mention that one cannot use a distributional argument on the
%normal coordinate chart to justify the equality of the mixed
%derivatives, for this chart is just $C^1$ and so $C^2$ test
%functions do not make sense.

%Typical proofs of the previous theorem for $C^1$ connection pass
%through Gauss' lemma \cite[Theor.\ 3.73]{gallot87} \cite[Cor.\
%2.2]{senovilla97} \cite[Theor.\ A.1]{candela08} \cite{chrusciel11}.

The situation is  worse for Lipschitz connection or sprays and
$C^{1,1}$ metrics.  Under this differentiability hypothesis the exponential map is just Lipschitz, thus
 Gauss' Lemma is not even expected to hold. However, we
shall show that Gauss' Lemma still holds true in a formulation which
does not require the differentiability of the exponential map.

%does not hold. We will be able to avoid this difficulty giving a
%proof which does not use Gauss' lemma.
We shall need some  technical result either on (a) the
differentiation under the integral sign, or on (b) Schwarz's theorem
on the equality of mixed partial derivatives. We shall discuss them
in Section \ref{jtt}. In the end we shall prove (in order to obtain
a more restrictive  pseudo-Riemannian version just remove the vector
index from the fundamental tensor $g$):

%Let us prove that for a metric compatible connection a version of
%Gauss' Lemma still holds.

\begin{theorem} \label{gux}
Let  $(M,g)$ be a $C^{2,1}$ pseudo-Finsler manifold for which $g$ is
$C^{1,1}$ (Sect.\ \ref{hbh}).
 Let $N$ be a normal neighborhood of
$p\in M$.
% $\nabla$ is the
%Levi-Civita connection of a $C^{1,1}$ pseudo-Riemannian metric $g$.
The function $D^2_p\colon  N \to \mathbb{R}$ defined by
\begin{equation} \label{jeg}
D^2_p(q):= 2L(p, exp_p^{-1}(q))=\,g_{exp_p^{-1}(q)}(exp_p^{-1}(q),
exp_p^{-1}(q))
\end{equation}
is $C^{1,1}$ in $q$  and
\begin{equation} \label{jwf}
d D^2_p(q)= 2 g_{P(p,q)} (P(p,q),\cdot),
\end{equation}
where $P(p,q)= \gamma'_{\exp^{-1}_p q}(1)$ is the position vector of $q$ with respect to $p$.
Thus the level sets of $D^2_p$ are orthogonal to the geodesics
issued from $p$, and for $t,s>0$ the $(-t)$-time flow map of
$P(p,\cdot)$ is a Lipeomorphism between $(D^2_p)^{-1}(s)$ and its
image on $(D^2_p)^{-1}(s\, e^{-2t})$.

Finally, Eq.\ (\ref{juw}) holds wherever $\exp_p\colon \exp_p^{-1} N
\to N$ is differentiable, hence almost everywhere. Thus the usual
Gauss' Lemma holds under $C^2$ differentiability of the metric $g$.
\end{theorem}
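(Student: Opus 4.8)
The plan is to prove the differentiability statements by working directly with the Picard--Lindelöf analysis of the geodesic flow rather than with the (merely Lipschitz) exponential map itself. The starting point is Theorem \ref{flo}: the geodesic flow $\varphi(t,v)=\gamma_v'(t)$ is locally Lipschitz, is $C^{1,1}$ in $t$ along each geodesic, admits a full-measure star-shaped set $\tilde\Omega$ on which $\partial_2\varphi$ exists and is Lipschitz in $t$ (locally uniformly), and has equal mixed second derivatives $\partial_1\partial_2\varphi=\partial_2\partial_1\varphi$ a.e. I would first establish a version of Gauss' Lemma in \emph{integrated} form. Fix $p\in M$, a normal neighborhood $N$, and write $f(t,s)=\exp_p(t\,v(s))$ for a $C^1$ curve $s\mapsto v(s)$ in $\exp_p^{-1}N\setminus 0$; the key quantity is $E(t,s):=g_{\gamma'_{v(s)}(t)}\big(\gamma'_{v(s)}(t),\,\partial_s f(t,s)\big)$. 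Using the geodesic equation and the defining identities \eqref{jui} for the pseudo-Finsler metric (homogeneity of $g_v$ and the two orthogonality relations on $\partial g/\partial v^\alpha$), a direct computation of $\partial_t E$ — legitimate along the radial direction because, by Peano's theorem and Theorem \ref{flo}, everything in sight is $C^1$ in $t$ — should show that $E$ is \emph{constant in $t$}, exactly as in the smooth case. Evaluating at $t=0$, where $\gamma'_{v(s)}(0)=v(s)$ and $\partial_s f(0,s)=0$, would give $E\equiv 0$ when $v'(s)$ is "transverse", and more generally $E(t,s)=g_{v(s)}(v(s),v'(s))$ after one more homogeneity argument; this is \eqref{juw} read through the flow, valid \emph{at every point}, not merely a.e., because the radial parameter is the good one.

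Next I would deduce the claims about $D^2_p$. Along a geodesic $\gamma_v$ with $v=\exp_p^{-1}q$, one has $D^2_p(\gamma_v(t))=t^2\,2L(p,v)$ by homogeneity of $L$, so $D^2_p$ is manifestly smooth in the radial variable; the content is the regularity in the transverse variables and the gradient formula \eqref{jwf}. Here I would argue that $\exp_p$ is differentiable precisely on a full-measure set (Theorem \ref{flo}), and on that set the chain rule plus the integrated Gauss' Lemma gives $d D^2_p(q)\cdot w = 2 g_{P(p,q)}(P(p,q),\,(d\exp_p)_v^{-1}w\cdot\text{stuff})$... more cleanly: writing any tangent vector at $q$ as $(d\exp_p)_v w$ where it makes sense, Gauss' Lemma collapses the expression to $2g_{P(p,q)}(P(p,q),\cdot)$ evaluated via the position vector, which is a \emph{locally Lipschitz} field on $N$ (it is $\gamma'_{\exp_p^{-1}q}(1)$, and $\exp_p^{-1}$ is a Lipeomorphism by Theorem \ref{jfd}, while $\varphi(1,\cdot)$ is locally Lipschitz). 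Since $g$ is $C^{1,1}$, the right-hand side of \eqref{jwf} is a locally Lipschitz $1$-form. Thus $D^2_p$ is a function whose distributional gradient agrees a.e.\ with a locally Lipschitz field, hence $D^2_p\in C^{1,1}$ — this is the "gain of one degree" over naive counting advertised in the abstract, and it follows because the a.e.\ identity \eqref{jwf}, with a continuous right-hand side, forces $D^2_p\in C^1$ and then the Lipschitz right-hand side upgrades to $C^{1,1}$.

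For the geometric corollaries: orthogonality of level sets of $D^2_p$ to the radial geodesics is immediate from \eqref{jwf}, since $d D^2_p(q)$ annihilates exactly the $g_{P(p,q)}$-orthogonal complement of $P(p,q)$. The statement about the $(-t)$-time flow of $P(p,\cdot)$ carrying $(D^2_p)^{-1}(s)$ into $(D^2_p)^{-1}(se^{-2t})$ follows by noting that the flow of $P(p,\cdot)$ is (up to reparametrization) the radial geodesic flow, along which $D^2_p$ scales like the square of the affine parameter, giving the factor $e^{-2t}$; the Lipeomorphism property is inherited from the local Lipschitz dependence of the flow on initial conditions (Theorem \ref{flo}) together with the Lipeomorphism property of $\exp_p$. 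Finally, \eqref{juw} in the stated pointwise form on the differentiability set of $\exp_p$ is just the integrated Gauss' Lemma specialized to curves $v(s)$ realizing an arbitrary $w\in T_v(T_pM)$, combined with differentiability of $\exp_p$ there; and when $g\in C^2$ the exponential map is $C^1$ (classical, via the standard ODE-smoothness argument since the spray is then $C^1$), so the differentiability set is everything and one recovers the usual Gauss' Lemma. The main obstacle I anticipate is the first step: justifying the computation $\partial_t E\equiv 0$ under only $C^{1,1}$ regularity of $g$ — one must be sure that the $t$-derivatives of $g_{\gamma'(t)}$ and of $\partial_s f(t,s)$ exist and that the mixed partials $\partial_t\partial_s f=\partial_s\partial_t f$ may be interchanged; this is exactly where Theorem \ref{flo}'s statements about $\varphi(\cdot,v)\in C^{1,1}$ and the a.e.\ equality of mixed differentials, together with the technical differentiation-under-the-integral results promised for Section \ref{jtt}, must be invoked with care, and where the asymmetry between the radial and transverse directions (Peano's theorem) does the real work.
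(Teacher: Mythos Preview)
Your approach is essentially that of the paper: establish the variational identity via the geodesic flow and Theorem~\ref{flo}, obtain \eqref{jwf} almost everywhere, and then upgrade to $C^{1,1}$ using that the right-hand side is a locally Lipschitz $1$-form (this last step is Theorem~\ref{pok} in the paper). Two points deserve correction or sharpening.

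First, your claim that $E(t,s)=g_{\gamma'_{v(s)}(t)}(\gamma'_{v(s)}(t),\partial_s f(t,s))$ is \emph{constant} in $t$ is not correct in general. What one actually gets (and what the paper obtains via the Euler--Lagrange form of the geodesic equation) is
\[
\partial_t E \;=\; \partial_s L(x,x_t) \;=\; g_{v(s)}\big(v(s),v'(s)\big),
\]
which is independent of $t$ because $L$ is constant along geodesics; hence $E$ is affine in $t$ with $E(0,s)=0$, giving $E(1,s)=g_{v(s)}(v(s),v'(s))$. Constancy of $E$ holds only in the transverse case $g_{v}(v,v')=0$. This is a minor slip and does not affect the strategy.

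Second, and more substantively, you parametrize the variation from the \emph{source} side by choosing a $C^1$ curve $v(s)$ in $T_pM$ and setting $\sigma(s)=\exp_p v(s)$. The paper instead parametrizes from the \emph{target} side: it fixes an arbitrary $w\in T_qM$, takes a $C^{2,1}$ geodesic $\sigma$ in $N$ with $\sigma'(0)=w$, and then sets $v(s)=\exp_p^{-1}\sigma(s)$, which is only Lipschitz. This is why the paper needs the Lipschitz-submanifold clause of Theorem~\ref{flo}. The advantage of the target-side parametrization is that every direction $w\in T_qM$ is realized directly, without having to argue that $(d\exp_p)_v$ is invertible (a fact you would otherwise need to extract from the Lipeomorphism property and the area formula at almost every $v$). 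Your route can be made to work, but it requires this extra justification; the paper's choice sidesteps it.
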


%The vector $P(p,q)$ is called {\em position vector} of $q$ with
%respect to $p$, that is, $P(p,q)$ is the tangent at $q$ to the
%geodesic $\gamma(s)$ joining $p=\gamma(0)$ to $q=\gamma(1)$.
%By Eq.\ (\ref{njr})

Geometrically Eq.\ (\ref{jwf}) states the geodesic  connecting $p$
to $q$ is perpendicular to a level set $D^2_p=cnst$ (in
pseudo-Finsler geometry $v$ is perpendicular to $w$ if
$g_v(v,w)=0$).

 Observe that $D^2_p$ can be negative if $g$ is not
positive definite. If $g$ is positive definite, as we shall prove in
a moment in Theorem \ref{pwa}, $D_p(q)$ coincides with the Finsler
distance  between $p$ and $q$ in the space $(N,g\vert_N)$ and hence
coincides with the Finsler distance on $M$ provided  the ball of
radius $D_p(q)$ centered at $p$ is contained in $N$.

\begin{remark}
It is  somewhat surprising that $D^2_p$ is $C^{1,1}$. One would
expect it to be Lipschitz because from its definition, that is Eq.\
(\ref{jeg}), we see that it is built from the inverse of the
exponential map which is Lipschitz. The additional degree of
differentiability comes from the fact that we can check that the
differential is almost everywhere as in Eq.\  (\ref{jwf}). This
result can then be extended everywhere thanks to the Lipschitzness
of $D^2_p$ and the continuity of $P(p,q)$ (see Theorem \ref{pok}).
\end{remark}

\begin{remark}
The proof of this result can be easily generalized to show that for $(p,q)$ belonging to a reversible convex normal set, $D_p(q)$ is $C^{1,1}$ in $(p,q)$ and its differential is
\[
d D^2_p(q) (v_p,v_q)= 2 g_{P(p,q)} (P(p,q),v_q)+2 g_{\tilde{P}(q,p)} (\tilde{P}(q,p),v_p),
\]
where $v_p\in T_pM$, $v_q\in T_qM$, and $\tilde{P}$ is the position vector map according to the reverse spray. Furthermore, with Prop.\ \ref{juy} we shall prove  that $P(p,q)$ is strongly differentiable on the diagonal of $M\times M$ hence $D^2_p(q)$ has first differential which is strongly differentiable at the origin.
\end{remark}

We shall state the next result for pseudo-Finsler manifolds for
which the fundamental tensor is either positive definite (Finsler
geometry) or of signature $(-,+,\cdots,+)$ (Lorentzian-Finsler
geometry). It is necessary to elaborate the last structure in the
notion of {\em Finsler spacetime} which extends the usual notion of
spacetime met in mathematical relativity.

Let us start from a Lorentzian-Finsler manifold $(M,g)$, and let us
keep in mind that if the fundamental tensor $g_v$ does not depend on
the velocity then we are back to a Lorentzian manifold
\cite{beem96}.  Non vanishing vectors are called {\em spacelike},
{\em lightlike} or {\em timelike} depending on the sign of
$g_v(v,v)$, namely positive, null or negative, and the terminology
extends to $C^1$ curves provided the causality type of the tangent
vector is consistent throughout the curve (which is assured for
geodesics since in that case $g_v(v,v)$ is constant over the curve). A vector is {\em null} if it is lightlike or zero, and {\em non-spacelike} if it is causal or zero.
 At any $x\in M$ let us denote with $I_x\subset T_xM$ the subset of
timelike vectors, and with $J_x$ the subset of non-spacelike
vectors and with $E_x$ the subset of null vectors.

Beem and Perlick \cite{beem70,perlick06} have shown that each
component of $I_x$ is convex, and hence, by continuity, that each
component of $J_x\backslash\{0\}$ is convex. Since $\p L(x,v)/\p
v\ne 0$ for $v\ne 0$, the hypersurfaces $g_v(v,v)=cnst$ are imbedded
submanifolds and hence each component of $E_x\backslash \{0\}$ plus
$\{0\}$ is the boundary of some component of $I_x$. Analogously,
each component of $J_x\backslash \{0\}$ plus $\{0\}$  is the closure
of some component of $I_x$. Furthermore, again because $\p L(x,v)/\p
v\ne 0$ for $v\ne 0$, distinct components of $J_x\backslash  \{0\}$
do not intersect.

%As a consequence of these facts it is not hard to show that the
%number $K$ of components of $I_x$ do not depend on $x$. Furthermore,
%passing to a finite covering if necessary, it is possible to show
%that there exist $N$ global timelike vector fields $T_i$ such that
%at any $x$, $T_i$ be
For simplicity we shall restrict our analysis to
\begin{definition}
 {\em Finsler
spacetimes} are pseudo-Finsler manifolds $(M,g)$ for which (a) $g$
has signature $(-,+,\cdots, +)$, (b) for one (and hence every) $x\in
M$, $I_x$ has just 2 components, (c) there exists a global
continuous timelike vector field which defines a notion of {\em
future cone}.
\end{definition}

%Condition (b) can be shown \cite{minguzzi13b} to be a consequence of
%(a) provided the dimension of the manifold is larger than 2.

Condition (c), can be accomplished passing to a double
covering while (b) is assured under reversibility if the spacetime has dimension larger than two \cite{minguzzi13c}.

Given the time orientation, causal vectors are either future or
past, and so the regular $C^1$ causal curves are either past
directed or future directed . The $C^1$ causal curves which we shall
consider will be future directed.

\begin{remark}
Observe that we   do not assume that $g$ is reversible. Thus we have
essentially two different distributions of light cones on spacetime
and hence two causality theories. In what follows, by mentioning
only future directed timelike curves we restrict ourselves to one of
these theories.
\end{remark}

Two points $x,y\in M$ are said to be chronologically related in a
set $S\subset M$, this being denoted $y\in I^+_S(x)$, $(x,y)\in
I^+_S$ or $x\ll_S y$, if there is a future directed $C^1$ timelike
curve from $x$ to $y$ contained in $S$. Two points are said to be
causally related, this being denoted $y\in J^+_S(x)$, $(x,y)\in
J^+_S$ or $x \le_S y$, if there is a future directed $C^1$ causal
curve from $x$ to $y$ contained in $S$ or $x=y\in S$. We write
$x<_Sy$ if $x\le_S y$ but $x\ne y$. If $S=M$ then we write simply
$\ll$, $\le$ and $<$.

A curve $\sigma\colon [a,b]\to M$ will be {\em absolutely
continuous} (an AC-curve for short) if its components in one (and
hence every) local chart are locally absolutely continuous.
Equivalently, introducing a complete Riemannian metric on $M$, and
denoting by $\rho$ the corresponding distance, $\sigma$ is
absolutely continuous if it satisfies locally the usual
definition of absolute continuity between (topological) metric
spaces.
 Since every pair of Riemannian metrics over a
compact set are Lipschitz equivalent, and $M$ is locally compact,
this definition does not depend on the metric chosen. Analogously,
we can define the concept of Lipschitz curve.

We shall say that an AC-curve $\sigma\colon [a,b]\to M$, $t\mapsto
\sigma(t)$, is future directed {\em causal} if $\dot \sigma$ is is future directed causal almost everywhere. We do not need to define a
notion of absolutely continuous timelike curve.

The Lorentzian-Finsler length of a causal AC-curve is
\[
l[\sigma]=\int_a^b\! \sqrt{-g_{\dot\sigma}(\dot\sigma,\dot\sigma)}
\,\, \dd t ,
\]
and it is finite because the integrand belongs to $L^1([a,b])$ as in
coordinates we have $\dot{\sigma}^\mu\in L^1([a,b])$,
$\sqrt{\vert\dot{\sigma}^\mu\vert} \in L^2([a,b])$.

In the Finsler case the concept of Finsler length is defined
analogously but with a plus sign inside the square root. The Finsler
distance from $p$ to $q$ is the infimum of the Finsler lengths of
the $C^1$ curves connecting $p$ to $q$. It is symmetric and hence a
true distance whenever the Finsler structure is reversible. We can
also define a Lorentzian-Finsler distance between $p$ and $q$ with
$p\le q$ as the supremum of the Lorentzian-Finsler lengths of the
causal AC-curves connecting $p$ to $q$. As in Lorentzian geometry,
it satisfies a reverse triangle inequality \cite{beem96}.

%Let us observe that since $g(\dot\gamma,\dot\gamma)(t)$ is Lipschitz
%if $g(\dot\gamma,\dot\gamma)(t_0)>0$ then the same inequality holds
%in an open set containing $t_0$, thus a Lipschitz causal curve
%admits a causal tangent vector wherever it is differentiable

\begin{theorem} \label{pwa}
Let  $(M,g)$ be a $C^{2,1}$ pseudo-Finsler manifold for which $g$ is
$C^{1,1}$ (Sect.\ \ref{hbh}). Let $N$ be a normal neighborhood of
$p\in M$ and suppose that  $g$ is
\begin{itemize}
\item[] $\!\!\!\!\!\!\!\!\!$Finsler: \\Let $\sigma\colon [0,1]\to N$, $s\mapsto \sigma(s)$, be any AC-curve
starting from $p$, then its length is larger than that of the
(unique) geodesic connecting its endpoints, unless its image
coincides with that of  that geodesic. In this last case the Finsler
distance from $p$ provides an affine parameter $r(s)$ where the
dependence on $s$  is absolutely continuous and increasing.
\item[] $\!\!\!\!\!\!\!\!\!$Lorentzian-Finsler: \\Let $\sigma\colon [0,1]\to N$ be any future directed causal
AC-curve starting from $p$, then $\exp^{-1}(\sigma(s))$ is future directed causal
for every $s>0$, and if $\exp^{-1}(\sigma(\hat s))$ is lightlike
then $\sigma\vert_{[0,\hat{s}]}$  coincides with a future directed lightlike
geodesic segment up to parametrizations. Finally, the
Lorentzian-Finsler length of $\sigma$ is smaller than that of the
(unique)  future directed casual geodesic connecting its endpoints,  unless its image
coincides with that of that geodesic. In this last case the
affine parameter of the geodesic is absolutely continuous and increasing with
$s$.
\end{itemize}
\end{theorem}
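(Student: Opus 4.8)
The plan is to reduce both the Finsler and the Lorentzian-Finsler statements to the properties of the function $D^2_p$ established in Theorem \ref{gux}, using the position vector field $P(p,\cdot)$ as the key object linking the curve $\sigma$ to the radial geodesic structure. First I would set $f(s)=D^2_p(\sigma(s))$ for an AC-curve $\sigma$ with $\sigma(0)=p$. Since $D^2_p$ is $C^{1,1}$ by Theorem \ref{gux} and $\sigma$ is absolutely continuous, $f$ is absolutely continuous and for almost every $s$ one has $f'(s)= dD^2_p(\sigma(s))[\dot\sigma(s)] = 2\, g_{P(p,\sigma(s))}(P(p,\sigma(s)),\dot\sigma(s))$ by the chain rule and Eq.\ (\ref{jwf}). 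The idea is that $P(p,q)$ plays the role of the (possibly non-differentiable) ``radial vector'' $\exp_p$ would push forward, and the Gauss Lemma identity (\ref{juw}), valid a.e., lets us decompose $\dot\sigma$ relative to $P$.

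For the \emph{Finsler} case, I would argue that along $\sigma$ the Finsler length element $F(\dot\sigma)=\sqrt{g_{\dot\sigma}(\dot\sigma,\dot\sigma)}$ dominates the ``radial'' growth rate of $D_p=\sqrt{D^2_p}$. Concretely, writing $r(s)=D_p(\sigma(s))$, one gets $r(s) r'(s) = g_{P}(P,\dot\sigma)$ a.e.; since $|P|_{g_P}$ equals $r$ (because $D^2_p(q)=g_{P}(P,P)$, as $P(p,q)=\gamma'_{\exp_p^{-1}q}(1)$ and $L$ is constant along the geodesic equal to $\tfrac12 g_v(v,v)$ with $v=\exp_p^{-1}q$, so $g_P(P,P)=g_v(v,v)=D^2_p(q)$), the Cauchy--Schwarz inequality for the positive-definite form $g_P$ — applied with the fundamental tensor evaluated at $P$, which is legitimate precisely because of the Gauss identity relating $g_P(P,\dot\sigma)$ to the ambient geometry — yields $r'(s)\le F(\dot\sigma(s))$ a.e., with equality only when $\dot\sigma$ is proportional to the radial direction. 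Integrating over $[0,1]$ gives $r(1)=D_p(\sigma(1))\le l[\sigma|_{[0,1]}]$, and $D_p(\sigma(1))$ is exactly the length of the connecting geodesic; the rigidity case (equality) forces $\dot\sigma \parallel P$ a.e., hence $\sigma$ reparametrizes that geodesic, and one reads off that $r(s)$ is an absolutely continuous increasing affine parameter. I would need the elementary one-variable lemma that an AC function $r\ge 0$ with $r(0)=0$ and $r'\le h$ a.e.\ satisfies $r(1)\le\int_0^1 h$, plus care that $r$ may vanish on a set where $\dot\sigma$ can be arbitrary — there $F(\dot\sigma)\ge 0 = r'$ trivially, and on $\{r>0\}$ the division by $r$ is harmless.

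For the \emph{Lorentzian-Finsler} case the structure is parallel but one tracks causality cones rather than norms. I would first show $\exp^{-1}(\sigma(s))=s\,\exp_p^{-1}(\sigma(s))/1$-type scaling is irrelevant; what matters is that $P(p,\sigma(s))$ and $\exp_p^{-1}(\sigma(s))$ lie on the same ray, so the causal character of $\exp_p^{-1}(\sigma(s))$ is that of $P$. The claim ``$\exp^{-1}(\sigma(s))$ is future causal for $s>0$'' follows by a connectedness/continuity argument: $\exp_p^{-1}\sigma$ is a continuous curve starting at $0$, and if it ever left the future causal cone it would have to cross the spacelike region, but there $D^2_p>0$ while the reverse-triangle-type monotonicity — derived from $f'(s)=2g_P(P,\dot\sigma)\le 0$ a.e.\ when both $P$ and $\dot\sigma$ are future causal (the future causal cone being ``convex'' in the sense that $g_v(v,w)\le 0$ for future causal $v,w$, which holds by the Beem--Perlick convexity of $I_x$ recalled in the excerpt) — keeps $f=D^2_p(\sigma(s))\le 0$; I would make this an open-closed argument on $\{s: \exp_p^{-1}\sigma(s)\in J^+\}$. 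The length comparison then mirrors the Finsler case with reversed inequalities: $-r^2(s):=D^2_p(\sigma(s))$ satisfies, on the timelike region, $|r\,r'|$ controlled by $\sqrt{-g_{\dot\sigma}(\dot\sigma,\dot\sigma)}$ via the reverse Cauchy--Schwarz for Lorentzian scalar products among future causal vectors, giving $l[\sigma]\le D_p(\sigma(1))=$ length of the causal geodesic, with the lightlike-endpoint rigidity and the equality rigidity extracted as before.

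The main obstacle, I expect, is the rigidity (equality) analysis and the handling of the degenerate locus. Everywhere the argument hinges on applying a Cauchy--Schwarz-type inequality with the fundamental tensor $g_P$ evaluated at the \emph{radial} vector $P(p,\sigma(s))$ rather than at $\dot\sigma(s)$ — this is exactly what Gauss' Lemma (\ref{juw}) buys us, but only \emph{almost everywhere} in $s$, since (\ref{juw}) holds only where $\exp_p$ is differentiable, and one must check that the exceptional set has measure zero \emph{along $\sigma$}, which is not automatic for a merely AC curve (a null set in $N$ can have positive $\sigma$-preimage measure). I would circumvent this by working with $D^2_p$ directly — which is genuinely $C^{1,1}$ on all of $N$ — so that $f'(s)=dD^2_p(\sigma(s))[\dot\sigma(s)]$ holds for a.e.\ $s$ with no differentiability assumption on $\exp_p$, and only invoke (\ref{jwf}) (not (\ref{juw})) for the value of $dD^2_p$; the pointwise Cauchy--Schwarz comparison of $g_P(P,\dot\sigma)$ with $\sqrt{|g_{\dot\sigma}(\dot\sigma,\dot\sigma)|}\,\sqrt{|g_P(P,P)|}$ then needs the relation $g_P(P,P)=D^2_p$ and a comparison between $g_P$ and $g_{\dot\sigma}$ on the common vector $\dot\sigma$, which is precisely where a careful version of Gauss' Lemma or a direct homogeneity computation must be inserted. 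Pinning down that last comparison cleanly, for all admissible $\dot\sigma$ and without extra differentiability, is the delicate point.
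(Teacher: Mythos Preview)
Your approach is essentially the paper's: differentiate $D^2_p\circ\sigma$ using the $C^{1,1}$ Gauss formula (\ref{jwf}), then apply a Cauchy--Schwarz-type inequality and integrate. Your insistence on working with $D^2_p$ (genuinely $C^{1,1}$) rather than the a.e.\ identity (\ref{juw}) is exactly right and matches the paper. The ``delicate point'' you flag at the end is not actually delicate: the needed inequality $g_{\hat P}(\hat P,\dot\sigma)\le \sqrt{g_{\dot\sigma}(\dot\sigma,\dot\sigma)}$ (Finsler) and its reverse $-g_{v_1}(v_1,v_2)\ge \sqrt{-g_{v_1}(v_1,v_1)}\sqrt{-g_{v_2}(v_2,v_2)}$ (Lorentzian--Finsler, future causal $v_1,v_2$) are the standard Finslerian fundamental inequality and its reverse; the paper simply cites them. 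No comparison between $g_P$ and $g_{\dot\sigma}$ on $\dot\sigma$ beyond these is required.

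There is, however, a real gap in your Lorentzian--Finsler argument for showing $\exp_p^{-1}\sigma(s)$ is future causal for all $s>0$. Your open--closed argument is circular near $s=0$ and at lightlike boundary points: to get $f'=2g_P(P,\dot\sigma)\le 0$ you already need $P(p,\sigma(s))$ future causal, which is what you are proving. If $\sup\{s:\exp_p^{-1}\sigma\text{ future causal on }[0,s]\}=s_0<1$ and $D^2_p(\sigma(s_0))=0$ (in particular at $s_0=0$), nothing in your sketch pushes past $s_0$. The paper's fix is a genuine extra idea: prepend a short future timelike geodesic $\eta:[-\epsilon,0]\to N$ with $\eta(0)=p$, so the concatenated curve immediately enters the chronological future of $\eta(-\epsilon)$ (where the ``once $D^2<0$, always $D^2<0$'' argument applies cleanly), obtain a future timelike geodesic from $\eta(-\epsilon)$ to $\sigma(s)$, and let $\epsilon\to 0$ using continuity of the reverse exponential map to conclude that the geodesic from $p$ to $\sigma(s)$ is future causal. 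You will need this or an equivalent device; the open--closed argument alone does not close.
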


%
%
%Then $C$ can be chosen such that:
%\begin{itemize}
%\item[] $\!\!\!\!\!\!\!\!\!$(The metric $g$ is Riemannian.) \\Let $\gamma$ be any AC-curve
%contained in $C$, then its length is smaller than that of the
%geodesic connecting its endpoints, unless it coincides with that
%geodesic up to parametrizations.
%
%
%Let for $q\in C$, $d_p(q)=\{g(exp_p^{-1}(q), exp_p^{-1}(q))\}^{1/2}$, for  sufficiently small
%$\delta>0$
% the set $d_p(x)\le \delta$ is strictly convex normal.
%
%
%\item[] $\!\!\!\!\!\!\!\!\!$(The metric $g$ is Lorentzian.) \\Let $\gamma$ be any causal AC-curve
%contained in $C$, then its endpoints are connected by a causal
%geodesic which is lightlike if and only if $\gamma$ coincides with
%it up to parametrizations. Furthermore, the Lorentzian length of
%$\gamma$ is larger than that of the geodesic connecting its
%endpoints,  unless it coincides with that geodesic up to
%parametrizations.
%
%Let for $q\in C$, $d_p=\{g(exp_p^{-1}(q), exp_p^{-1}(q))\}^{1/2}$, for every $\delta\ge 0$
% the set $\{x: d_p(x)\le -\delta\}\cap C$ is strictly convex normal (possibly empty). [RISCRIVERE]
%\end{itemize}

\begin{remark}
Physically the Lorentzian-Finsler version proves that a motion which
is almost everywhere slower than light is also locally slower than
light.  This is the main result which allows us to develop causality
theory for Lipschitz connections in Finsler spacetimes. In
particular, $I^+_N(p)$ (or $J^+_N(p)$)  coincides with the
exponential map-image in $N$ of the future directed timelike (resp.\
causal)  cone at $p$.
\end{remark}

\begin{remark}
It is natural to ask whether locally a spacelike geodesic segment
minimizes the functional $\int \sqrt{g_{\dot{\sigma}}(\dot
\sigma,\dot \sigma)} \,\dd t$ over the $C^1$ spacelike curves
connecting the same endpoints. The answer is negative already in 1+1
Minkowski spacetime, just consider almost lightlike zig-zag curves
which approximate the geodesic. Their presence shows that the
infimum of the functional vanishes.
\end{remark}

%
%We recall that according to Hawking and Ellis \cite{hawking73} a
%(future directed) continuous causal curve $x:[a,b]\to M$, is a
%continuous curve such that for every open convex normal set $C$
%intersecting $x$, whenever $x([t_1,t_2])\subset C$, $t_1<t_2$, the
%points $x(t_1)$ and $x(t_2)$ are connected by a (future directed)
%causal geodesic contained in $C$.
%
%
%An interesting consequence  of Theorem \ref{pwa} is the next kind of
%converse of the well known fact that continuous causal curves are
%Lipschitz when parametrized with respect to a complete Riemannian
%metric  \cite{penrose72} (due to the light cones that bound the
%curve).
%
%
%
%Known proofs in Lorentzian geometry \cite{candela08}  work under
%stronger differentiability assumption which guarantee the validity
%of the usual Gauss' Lemma.
%
%\begin{theorem}
%Let  $M$ be a $C^{2,1}$-manifold endowed with a $C^{1,1}$ Lorentzian
%metric $g$ and corresponding Levi-Civita locally Lipschitz
%connection.
%
%Let $I$ be an interval of the real line. Every  causal AC-curve
%$x:I\to M$ is a continuous causal curve. Every continuous causal
%curve $x:I\to M$ once suitably parametrized (e.g.\ with the
%arc-length of a Riemannian metric) becomes a causal locally
%Lipschitz curve.
%\end{theorem}
%
%\newpage

\subsection{Some applications to mathematical relativity}

We recall that according to Hawking and Ellis \cite{hawking73} a
future directed continuous causal curve $x:[a,b]\to M$, is a
continuous curve such that for every open convex normal set $C$
intersecting $x$, whenever $x([t_1,t_2])\subset C$, $t_1<t_2$, the
points $x(t_1)$ and $x(t_2)$ are connected by a future directed
causal geodesic contained in $C$. This definition can be imported
word by word to the realm of Finsler spacetimes.

An interesting consequence  of Theorem \ref{pwa} is Theorem
\ref{bpo} which will provide a kind of converse of the well known
fact that continuous causal curves are Lipschitz when parametrized
with respect to the arc-length of a  Riemannian metric
\cite{penrose72} (due to the light cones that bound the curve).
Known proofs in Lorentzian geometry \cite{candela08}  work under
stronger differentiability assumption which guarantee the validity
of the usual Gauss' Lemma.

For its proof we shall need a lemma (a Lorentzian version is
\cite[Lemma 2.13]{minguzzi06c}). Given two Lorentzian-Finsler
metrics $g_1$ and $g_2$ we write $g_1<g_2$ if the causal vectors for
$g_1$ are timelike for $g_2$.

\begin{lemma} \label{gpl}
Let  $(M,g)$ be a Finsler spacetime. Let $p\in M$ then we
can find in a neighborhood $O$ of $p$ a flat Lorentzian metric
(hence independent of $v$) $g^+$, such that $g<g^+$.
\end{lemma}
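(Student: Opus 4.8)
The plan is to work in a single coordinate chart around $p$, freeze the fundamental tensor at the point $p$ and at a fixed timelike direction, and then open up the light cone slightly by a conformal-type adjustment, using a compactness argument on the (compact) set of causal directions.

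First I would pick coordinates $\{x^\mu\}$ on a neighborhood $U$ of $p$ so that at $p$ one chosen future timelike direction is $e_0=\partial_0$. Consider the closed set of unit (with respect to an auxiliary Riemannian metric $h$) causal vectors over $\bar U'$ for $U'\Subset U$ a small coordinate ball; this set, call it $\mathcal{C}$, is compact because the causal cone bundle restricted to a compact base is compact (each component of $J_x\setminus\{0\}$ is a closed cone, the closure of a component of $I_x$, as recalled just before the definition of Finsler spacetime). Define a candidate constant-coefficient Lorentzian metric $g^+ := \lambda\,(-(\dd x^0)^2 + \sum_{i\ge 1}(\dd x^i)^2) =: \lambda\,\eta$, with $\lambda>0$ a constant to be fixed, possibly after first rescaling the spatial coordinates so that $\eta$'s cone already contains a neighborhood of the direction $e_0$ within the cone of $g_{(p,e_0)}$ at the point $p$. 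The content of the lemma is then to choose $U'$ small and the cone of $\eta$ slightly wider than that of $g$ at $p$ so that every $g$-causal vector at every point of $U'$ is $\eta$-timelike.

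The key steps, in order: (i) Show the inclusion holds exactly at $p$ for a suitable flat $\eta$: since $g_{(p,v)}$ for $v$ ranging over $J_p\setminus\{0\}$ is, by the homogeneity (\ref{jui}), determined by $\hat v$ and varies continuously on the compact set of $h$-unit directions, the $g$-causal cone at $p$ is a genuine closed cone strictly contained (away from the vertex) in some open flat cone; take $\eta$ to be that flat Minkowski form (after a linear change of the spatial coordinates) and then shrink it back just slightly so that $g|_p < \eta$ still holds with room to spare. Concretely, for every $h$-unit $g$-causal $v$ at $p$ we have $\eta(v,v)<0$ with a uniform negative upper bound $-2\delta<0$ by compactness. (ii) Promote this to a neighborhood: the function $(x,v)\mapsto \eta_{(x,v)}$ is just $\eta$ (constant), while the set of $h$-unit $g$-causal vectors over $x$ varies continuously in $x$; hence the function $x\mapsto \sup\{\eta(v,v): v\ h\text{-unit},\ g\text{-causal at }x\}$ is upper semicontinuous (a sup over an upper-semicontinuously varying compact family of a continuous function), so it stays below $-\delta$ on some coordinate ball $O=B(p,r)\subset U'$. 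Then rescale by a constant $\lambda$ if one wishes $g^+=\lambda\eta$ to have any prescribed normalization — this does not change the cone. By homogeneity of both $g_{(x,\cdot)}$ and $\eta$ in $v$, the inequality $\eta(v,v)<0$ for all $g$-causal $v$ at every $x\in O$ is exactly the statement $g<g^+$ on $O$.

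The main obstacle I expect is step (ii), namely making the transition from "the cone opens at the single point $p$" to "it opens uniformly on a whole neighborhood," because the family of $g$-causal directions is a moving target as $x$ varies and one must be careful that no new causal directions sneak arbitrarily close to the boundary of the flat cone. This is handled precisely by the compactness of the $h$-unit causal-cone bundle over $\bar O$ together with the continuity of $g_{(x,v)}$ on $TM\setminus 0$ (recalled in Section \ref{hbh}): the distance, in the $h$-unit sphere, from the $g$-causal set at $x$ to the $g$-causal set at $p$ tends to $0$ as $x\to p$ (semicontinuity of the cone field, itself a consequence of $\partial L/\partial v\ne 0$ so that the cones $g_v(v,v)\le 0$ are "regular"), and $\eta(\cdot,\cdot)$ is uniformly continuous there; shrinking $r$ absorbs the resulting error into the slack $\delta$ produced in step (i). A minor point to check is that one genuinely gets a \emph{flat} metric and not merely a constant-coefficient symmetric form of the right signature — but a constant-coefficient nondegenerate symmetric form of Lorentzian signature is flat by definition (its Christoffel symbols vanish), so nothing further is needed.
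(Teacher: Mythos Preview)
Your overall architecture is the same as the paper's: fix a chart, show at the single point $p$ that the $g$-causal cone sits strictly inside some flat Lorentzian cone, then use compactness of the unit causal bundle and continuity to push this to a neighborhood. Your step (ii) is fine and matches the paper's continuity argument.

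The gap is in step (i). You assert that ``the $g$-causal cone at $p$ is a genuine closed cone strictly contained (away from the vertex) in some open flat cone,'' citing only closedness and compactness of $\hat J_p$. That inference is not valid: a closed proper cone need not lie inside any Minkowski cone unless you use that its two components are \emph{convex} and disjoint. Your opening move---choosing $\partial_0$ to be a $g$-timelike direction and then hoping to fix things by ``a linear change of the spatial coordinates''---does not by itself guarantee that the hyperplane $\{v^0=0\}$ misses $J_p\setminus\{0\}$; in the genuinely Finsler case the cone is not the null cone of $g_{(p,e_0)}$, so diagonalizing that single quadratic form is not enough.

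The paper supplies exactly this missing ingredient: it uses that the two components of $\hat J_p$ are disjoint closed convex subsets of the Euclidean unit sphere, applies the Hahn--Banach separation theorem (via stereographic projection) to produce a separating great sphere, and \emph{then} rotates the coordinates so that $\partial_0$ is orthogonal to that hyperplane. With $\partial_0$ chosen this way, every $g$-causal direction has $v^0\neq 0$, and taking $g^+=-N(\dd x^0)^2+\sum_i(\dd x^i)^2$ with $N$ large gives $g<g^+$ at $p$. So the fix is not to start from a timelike $e_0$, but to start from a separating hyperplane and let it dictate $e_0$.
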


\begin{proof}
Let $\{x^\mu\}$ be a coordinate chart in a neighborhood $O$ of $p\in
M$. Let $h=(\dd x^0)^2+(\dd x^1)^2+\cdots+(\dd x^{n-1})^2$ be the
usual Euclidean metric and let us consider the corresponding unit
sphere subbundle of $TM$. Let $\hat{J}_x$, $x\in O$, be the
intersection of $J_x$ with the unit sphere at $x$. Since the
components of $J_x\backslash\{0\}$ are convex in the linear
structure of $T_xM$, $\hat{J}_x$ is made of two closed disjoined
convex sets of the unit sphere at $T_xM$. We can always find a great
circle separating the two convex sets (note that the
sphere has dimension $n-1$, the circle has dimension $n-2$, thus the
terminology used is not accurate for $n\ne 3$) (to prove this use
for instance the stereographic projection from a point not belonging
to the convex sets, use the Hahn-Banach theorem, and then project
back to the sphere) and we can also rotate the coordinate system so
that the hyperplane on $T_pM$ spanned by that great circle is
orthogonal to $\p_0$. By continuity the Lorentzian metric at $p$,
$g^+=-N (\dd x^0)^2+(\dd x^1)^2+\cdots+(\dd x^{n-1})^2$, satisfies
$g<g^+$ for sufficiently large $N$. Still by continuity of $J_x$ on
$x$ (this continuity can be rigourously expressed in the Hausdorff
metric on sets, but the details will not be needed) the relation
$g<g^+$  holds in a neighborhood of $p$ which we can redefine to be
$O$.
\end{proof}

\begin{theorem} \label{bpo}
Let  $(M,g)$ be a Finsler spacetime where $M$ is a
$C^{2,1}$-manifold endowed with a $C^{1,1}$ fundamental field $g$.
Let $I$ be an interval of the real line. Every  future directed causal AC-curve
$x:I\to M$ is a future directed continuous causal curve. Every future directed continuous causal
curve $x:I\to M$ once suitably parametrized (e.g.\ with respect to
the arc-length of a Riemannian metric) becomes a future directed causal locally
Lipschitz curve.
\end{theorem}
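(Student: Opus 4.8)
The plan is to prove the two implications separately: the first is essentially a corollary of Theorem~\ref{pwa}, while the second combines Lemma~\ref{gpl} with the first-order behaviour of $\exp_q$ at the origin.

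For the implication ``future directed causal AC-curve $\Rightarrow$ future directed continuous causal curve'' I would only have to verify the geodesic-connection clause of the Hawking and Ellis definition, continuity of an AC-curve being automatic. So let $C$ be an open convex normal set meeting $x$ and let $t_1<t_2$ with $x([t_1,t_2])\subset C$; put $p:=x(t_1)$ and reparametrise $x|_{[t_1,t_2]}$ affinely to a future directed causal AC-curve $\sigma\colon[0,1]\to C$ starting at $p$ (note that the hypothesis $\dot x$ future causal a.e.\ already prevents $x$, hence $\sigma$, from being constant on a subinterval). Since $C$ is a normal neighbourhood of $p$, the Lorentzian--Finsler part of Theorem~\ref{pwa} gives that $\exp_p^{-1}(x(t_2))=\exp_p^{-1}(\sigma(1))$ is future directed causal; then the radial geodesic $u\mapsto\exp_p(u\,\exp_p^{-1}(x(t_2)))$, $u\in[0,1]$, joins $x(t_1)$ to $x(t_2)$, stays in $C$ by star-shapedness of the normal domain, and keeps its causal type and time-orientation since $g_{\dot\eta}(\dot\eta,\dot\eta)$ is constant along a geodesic. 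This is exactly the required property.

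For the converse I would proceed in three steps. (i) A continuous causal curve is nowhere locally constant: picking (Theorem~\ref{nsx}) a convex normal $C$ around the putative common value $p$, the definition would force a future directed---hence, in the paper's convention, non-stationary---geodesic in $C$ from $p$ to $p$, impossible since $\exp_p$ is injective on its star-shaped domain and $\exp_p^{-1}(p)=0$; in particular $x$ is locally injective. (ii) $x$ is locally rectifiable: near a point use Lemma~\ref{gpl} to get a chart $\{x^\mu\}$ on a neighbourhood $O$ and a flat Lorentzian $g^+=-N(\dd x^0)^2+\sum_{i\ge 1}(\dd x^i)^2$ with $g<g^+$, shrinking $O$ into a convex normal set and orienting $x^0$ so that future $g$-causal vectors have positive $0$-component; for nearby $t_1<t_2$ the future $g$-causal geodesic joining $x(t_1),x(t_2)$ in a convex normal $C\subset O$ has $g^+$-timelike velocity, hence $\dot\gamma^0>0$ and $\Vert\dot\gamma_{\mathrm{sp}}\Vert\le\sqrt N\,\dot\gamma^0$ in coordinates, so integrating over $[0,1]$ gives that $x^0$ is strictly increasing there and $\Vert x_{\mathrm{sp}}(t_2)-x_{\mathrm{sp}}(t_1)\Vert\le\sqrt N\,(x^0(t_2)-x^0(t_1))$, whence every coordinate has bounded variation and $x$ has finite Riemannian length on each compact subinterval. (iii) Reparametrise $x$ by Riemannian arc-length: the arc-length function $\ell$ is continuous and, by (i), strictly increasing, so $y:=x\circ\ell^{-1}$ is well defined, $1$-Lipschitz for the Riemannian distance (hence locally Lipschitz in every chart) and satisfies $\vert\dot y\vert_h=1$ a.e.; moreover $y$ is still a continuous causal curve, the Hawking and Ellis clause being reparametrisation-invariant. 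It remains to see $\dot y$ is future directed $g$-causal a.e.: at an $s$ where $\dot y(s)$ exists (a.e.\ $s$, by Rademacher) put $q:=y(s)$ and take a convex normal $C\ni q$, so that for small $h>0$ the vector $w_h:=\exp_q^{-1}(y(s+h))$ is the future $g$-causal initial velocity of a geodesic in $C$; a Taylor estimate of the geodesic equation---using that $H^\mu$ is homogeneous of degree two in the velocities, vanishes on the zero section and is bounded on the unit sphere bundle (cf.\ Theorem~\ref{jfd})---gives $\exp_q(v)=q+v+O(\vert v\vert^2)$ and hence $\exp_q^{-1}(\xi)=\xi-q+O(\vert\xi-q\vert^2)$ near $q$, which with $\vert y(s+h)-q\vert=O(h)$ yields $w_h=(y(s+h)-y(s))+O(h^2)$ and so $h^{-1}w_h\to\dot y(s)$; since each $h^{-1}w_h$ lies in the future $g$-causal cone $J^+_q$, whose closure is $J^+_q\cup\{0\}$, and $\vert\dot y(s)\vert_h=1\ne0$ a.e., we conclude $\dot y(s)\in J^+_q$ for a.e.\ $s$.

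I expect the obstacle to lie in steps (ii)--(iii). Rectifiability genuinely requires the comparison flat metric of Lemma~\ref{gpl}, since the $g$-cones alone provide no monotone ``time'' function with which to bound total variation. More delicate is that $g<g^+$ is only an \emph{outer} cone bound: it pins down the Lipschitz constant but a priori shows only that $\dot y$ is $g^+$-causal. To recover that $\dot y$ is honestly $g$-causal one must pass to convex normal neighbourhoods shrinking onto $q$ and use that $(d\exp_q)_0$ is the identity, so that the normalised secants $h^{-1}w_h$ of the $\exp_q^{-1}$-lift of the curve remain inside the $g$-causal cone and transmit the causal character to $\dot y(s)$ in the limit; this is the technical heart of the argument.
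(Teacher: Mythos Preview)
Your proof is correct and, for the first implication and the Lipschitz half of the second (your steps (i)--(ii)), follows the paper's own argument: Theorem~\ref{pwa} handles the forward direction, and for the converse both you and the paper invoke Lemma~\ref{gpl} to obtain a flat comparison metric $g^+$, then exploit the future $g$-causal geodesic joining $x(t_1)$ to $x(t_2)$ (supplied by the Hawking--Ellis definition) to extract strict monotonicity of $x^0$ and the spatial Lipschitz bound $\Vert x_{\mathrm{sp}}(t_2)-x_{\mathrm{sp}}(t_1)\Vert\le\sqrt{N}\,(x^0(t_2)-x^0(t_1))$. The paper parametrises first by $x^0$ and then observes that any Riemannian arc-length works by Lipschitz equivalence; you go to arc-length directly, which is a cosmetic difference.

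Your step~(iii), however, goes beyond the paper. The paper's proof stops once local Lipschitzness under arc-length is established and does not explicitly check that $\dot y$ is future $g$-causal almost everywhere---the secant estimates via $g^+$ yield only $g^+$-causality of the tangent. You close this by pulling back through $\exp_q^{-1}$: the vectors $w_h=\exp_q^{-1}(y(s+h))$ are genuinely future $g$-causal by the Hawking--Ellis clause, and the first-order expansion $\exp_q^{-1}(\xi)=\xi-q+o(\vert\xi-q\vert)$ (immediate from the strong differentiability in Theorem~\ref{jfd}; your sharper $O(\vert\xi-q\vert^2)$ is correct but more than needed) together with the Lipschitz bound $\vert y(s+h)-q\vert=O(h)$ gives $h^{-1}w_h\to\dot y(s)$, so closedness of $J_q^+\cup\{0\}$ and $\vert\dot y\vert_h=1$ a.e.\ finish the job. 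Given the paper's own definition of a future directed causal AC-curve (tangent future causal a.e.), this is exactly what the statement asks for, and your argument is the natural one.
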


\begin{remark}
It is not true that  every continuous causal curve is a causal
AC-curve. For instance, consider the timelike geodesic of Minkowski
spacetime which satisfies $\vec{x}=0$ and which is parametrized by
$x^0$. Consider the parametrization $t=f_s^{-1}(x^0)$  where $f_s$
is a singular monotone continuous function \cite[Ex.\
8.20]{rudin70}, so that $\dot f_s=0$ almost everywhere.
\end{remark}

% if and only if it is a causal
%AC-curve. If they are reparametrized with the arc length of a
%Riemannian metric then they are locally Lipschitz.

\begin{proof}
It is sufficient to prove it for $I=[a,b]$. Suppose that $x$ is a future directed
causal AC-curve, let $C$ be a convex normal set intersecting $x$,
and let $t_1<t_2$ be such that $x([t_1,t_2]) \subset C$. The set $C$
is a normal neighborhood for $p:=x(t_1)$ thus by Theorem \ref{pwa}
the geodesic connecting $x(t_1)$ and $x(t_2)$ is (future directed)
causal.

Conversely, suppose that $x:I\to M$ is a future directed continuous causal curve and
let $\bar{t}\in I$. By Lemma \ref{gpl} we can find $C^{2,1}$
coordinates $x^\mu$ in a convex neighborhood $C$ of $p:=x(\bar{t})$
such that for some $N>0$, the Lorentzian metric
 $g^+=-N (\dd x^0)^2+ (\dd x^1)^2+\cdots (\dd x^{n-1})^2$ satisfies  $g<g^+$ throughout $C$.

The function $x^0(t)$ must be increasing in a neighborhood of
$\bar{t}$. Indeed, for  $t_1,t_2$ belonging to a sufficiently small neighborhood of $\bar{t}$, $x(t_1), x(t_2)\in C$. By Theorem \ref{pwa} there is a future directed causal $g$-geodesic connecting $x(t_1)$ with $x(t_2)$, which is in particular a future directed $g^+$-causal $C^1$ curve. But  $x^0$ is increasing over this type of curve since
$x^0$ is the usual time coordinate for the subset $(C,g^+)$ of Minkowski spacetime, which proves the claim.

%
%suppose that for $t_1<t_2$, $x^0(t_2)\le x^0(t_1)$,
%then the causal $g$-geodesic connecting $x(t_1)$ with $x(t_2)$ would
%not be future directed (observe that $x^0$ is increasing over future
%directed $g^+$-causal $C^1$ curves because $(C,g^+)$ is a subset of
%Minkowski spacetime for which $x^0$ is the usual time coordinate,
%and $x$ being a $g$-geodesic is a $g^+$-causal $C^1$ curve), a
%contradiction.

Once parametrized with respect to $x^0$ the curve
becomes Lipschitz because of the condition of $g$-causality which
implies $g^+$-causality which implies $\Vert
\vec{x}(t_2)-\vec{x}(t_1)\Vert \le N \vert x^0(t_2)-x^0(t_1)\vert$.
Clearly, if $l$ is an arc-length parameter induced by the Euclidean
coordinate metric $(\dd x^0)^2+(\dd \vec{x})^2$ then $x^0(l)$ is
1-Lipschitz, and so $x(l)$ is locally Lipschitz. As all Riemannian
metrics are Lipschitz equivalent over compact sets, $x$ is locally
Lipschitz whenever parametrized with respect to Riemannian
arc-length.
\end{proof}
%
%Thus
%\[
%x^0(t)-x^0(\bar{t})=x^0_S(t)+\int_{\bar{t}}^t x'(t) \dd t
%\]
%where $x^0_S(t)$ is a singular function namely a function of bounded
%variation with vanishing derivative almost everywhere.

%As $x(t)$ has $g$-causal derivative almost everywhere it has
%$g'$-causal derivative almost everywhere, thus $ d x^0/d t>0$ almost
%everywhere.
%For the next result it is convenient to recall that the definition
%of a locally Lipschitz curve on a manifold can be given both
%componentwise, or introducing some Riemannian metric and hence
%distance over it. All this approaches give the same notion of
%locally Lipschitz curve.
%
%We are ready to prove the local  maximization property of geodesics.
%
%\begin{theorem}
%Let $x: [a,b]\to M$ be a casual geodesic. The following statements
%are equivalent
%\begin{itemize}
%\item[(i)] There is no $c\in (a,b)$ such that $x(c)$ is conjugate to
%$x(a)$.
%\item[(ii)] There is an open neighborhood $O\supset x([a,b])$ such that any
% Lipschitz causal {curve} $y:[a,b]\to O$ with the same endpoints of $x$ satisfies:
%$L[y] \le L[x]$.
%\end{itemize}
%In this case, if $x(b)$ is not conjugate to $x(a)$ then in (ii) the
%inequality is strict unless $y$ and $x$ have the same image.
%\end{theorem}

Continuous causal curves in Lorentzian geometry enjoy nice
properties under various notions of  limit \cite{minguzzi07c}.  The
proofs presented in reference \cite{minguzzi07c} hold as well under
our present weaker differentiability assumptions and in the Finsler
spacetime case. Analogously, as should be expected from the above
equivalence, the family of absolutely continuous curves  is closed
under uniform convergence, a fact quite well known since the work of
Tonelli on one-dimensional variational principles \cite{buttazzo98}.

The theorems so far proved in the Lorentzian-Finsler case are
sufficient to establish
 the validity of most results of mathematical
relativity and especially of causality theory for $C^{1,1}$ metrics
on even Finsler rather than just Lorentzian spacetimes. In
particular, the notions of chronological and causal relations do not
require modifications from the standard ones \cite{hawking73}. The
chronological relation is still open, the boundaries of the causal
and chronological futures of a point coincide, the achronal
boundaries are still Lipschitz hypersurfaces and so on.

We wish to include a result of this type to show that most proofs
can be extended word by word from the Lorentzian $C^3$ metric case
to the Lorentzian-Finslerian $C^{1,1}$ metric case. Let us recall
that a set is achronal if there is no timelike curve starting and
ending at the set.

\begin{lemma}Let  $(M,g)$ be a Finsler spacetime where $M$ is a
$C^{2,1}$-manifold endowed with a  $C^{1,1}$ fundamental
field $g$. Let $p<q$ then $(p,q)\in I^{+}$ or every continuous
causal curve connecting $p$ to $q$ is an  achronal  future directed lightlike
geodesic (up to parametrizations).
\end{lemma}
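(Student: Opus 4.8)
The plan is to prove the contrapositive of the first alternative: assuming $(p,q)\notin I^{+}$, i.e.\ $p\not\ll q$, I will show that any continuous causal curve $x$ joining $p$ to $q$ is, up to parametrization, a future directed lightlike geodesic whose image is achronal. First I use Theorem \ref{bpo} to reparametrize $x\colon[a,b]\to M$ (with $x(a)=p$, $x(b)=q$) so that it becomes a future directed causal locally Lipschitz, hence absolutely continuous, curve; since the two conclusions depend only on the image of $x$, this reparametrization costs nothing and it lets me apply Theorem \ref{pwa} to sub-arcs of $x$.

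The one genuine input is the following local consequence of Theorem \ref{pwa}: if $C$ is a convex normal set and $\sigma\colon[0,1]\to C$ is a future directed causal AC-curve whose image is not that of a lightlike geodesic segment, then $\sigma(0)\ll_{C}\sigma(1)$. Indeed Theorem \ref{pwa} guarantees that $\exp_{\sigma(0)}^{-1}(\sigma(1))$ is future directed causal, and if it were lightlike the same theorem would force $\sigma$ to coincide up to parametrization with a future directed lightlike geodesic segment, contrary to hypothesis; hence it is future directed timelike and $\sigma(1)\in I^{+}_{C}(\sigma(0))$ by the description of $I^{+}_{C}$ as the exponential image of the future timelike cone (Remark after Theorem \ref{pwa}). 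From this I recover the usual push-up statements $a\le b\ll c\Rightarrow a\ll c$ and $a\ll b\le c\Rightarrow a\ll c$: cover the relevant causal curve by finitely many convex normal sets, prove the push-up inside a single convex normal set by sliding the chronological endpoint back along its timelike geodesic and rounding off the corner at the joining point, using that the sum of a future timelike and a future causal vector at a point is future timelike (convexity of the cones $I_x,J_x$, \cite{beem70,perlick06}) together with the openness of $\ll$ and the joint continuity of $\exp$ near the diagonal (Theorems \ref{jfd} and \ref{nsx}), and then patch. In particular $p\le r_{1}\ll r_{2}\le q$ implies $p\ll q$. This push-up passage, from a chronological relation localized in one convex normal set to a global one, is the main obstacle; everything else is bookkeeping.

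With these tools in hand the argument is short. Cover the compact image $x([a,b])$ by convex normal sets and, using a Lebesgue number, pick $a=t_{0}<\cdots<t_{m}=b$ with $x([t_{i},t_{i+1}])\subset C_{i}$, each $C_{i}$ convex normal. If some piece $x|_{[t_{i},t_{i+1}]}$ has image different from that of a lightlike geodesic segment, the local input gives $x(t_{i})\ll x(t_{i+1})$, and since $x|_{[a,t_{i}]}$ and $x|_{[t_{i+1},b]}$ are future directed causal curves we get $p\le x(t_{i})$ and $x(t_{i+1})\le q$, whence $p\ll q$ by push-up, a contradiction. So every piece is a lightlike geodesic segment. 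If two consecutive such segments had non-parallel velocities at the joining point $x(t_{i})$, then for small $\epsilon>0$ the arc $x|_{[t_{i}-\epsilon,t_{i}+\epsilon]}$ would lie in a convex normal set, be future directed causal, and have image contained in no geodesic (a geodesic through $x(t_{i})$ being determined by its velocity there), so the local input and push-up would again give $p\ll q$, a contradiction. Hence the pieces fit smoothly and $x$ is, up to parametrization, a future directed lightlike geodesic. Finally, if a future directed timelike curve joined two points $x(s_{1}),x(s_{2})$ of its image, then with $s_{\min}=\min(s_1,s_2)$, $s_{\max}=\max(s_1,s_2)$ the causal sub-arc $x|_{[s_{\min},s_{\max}]}$ gives $x(s_{\min})\le x(s_{\max})$, while $p\le x(s_{\min})$ and $x(s_{\max})\le q$; whichever of the two points is chronologically to the past of the other, push-up then yields $p\ll q$ (closing the loop with $x(s_{\min})\le x(s_{\max})$ if the timelike curve runs from $x(s_{\max})$ back to $x(s_{\min})$), contradicting $p\not\ll q$. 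Therefore the image of $x$ is achronal and the proof is complete.
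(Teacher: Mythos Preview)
Your proof is correct and follows essentially the same route as the paper: cover the curve by finitely many convex normal sets, use Theorem~\ref{pwa} to see that each piece is joined by a causal geodesic which either coincides with the curve (lightlike case) or is timelike, and then argue that any timelike piece or non-smooth corner forces $p\ll q$ by sliding endpoints along timelike segments.

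The only organizational difference is that you first extract the push-up principle as a separate tool (with a sketched independent proof) and then invoke it, whereas the paper does the endpoint-sliding argument directly inside the proof of the lemma and derives push-up afterwards as a corollary. Both orderings are fine; your modular version is perhaps more reusable, while the paper's is more self-contained. You also pass through Theorem~\ref{bpo} to reparametrize the continuous causal curve as an AC-curve before applying Theorem~\ref{pwa}, whereas the paper works straight from the Hawking--Ellis definition of continuous causal curve (which already guarantees the needed causal geodesic segments inside each convex normal set); either route is legitimate. Your achronality argument handles the ``backward'' timelike case more explicitly than the paper's, which simply concatenates and reapplies the lemma---both are valid.
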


\begin{proof}
Assume $(p,q)\notin I^{+}$ and let $\gamma: [0,1]\to M$ be a future
directed continuous causal curve such that $\gamma(0)=p$ and
$\gamma(1)=q$. Since the image of $\gamma$ is compact there is a
finite covering with convex normal neighborhoods $U_i$,
$i=1,\ldots,n$. We can assume $U_i\cap U_{i+1}\ne \emptyset$ and
that there are $p_i \in \gamma\cap U_i\cap U_{i+1}$,
$i=1,\ldots,n-1$, $p_0\equiv p\in U_1$ and $p_n\equiv q\in U_n$.
Since $\gamma$ is a continuous causal curve by Theorem \ref{pwa}
$(p_i,p_{i+1}) \in J^{+}_{U_{i+1}}$ thus $p_i$ and $p_{i+1}$ are
joined by a geodesic $\eta_i$ in $U_{i+1}$ and this geodesic
coincides with $\gamma$ between the same points or it is timelike.
%
%Indeed, if they do not coincide we can find a point $r\in \gamma\cap
%U$, $r \in J^{+}_{U_{i+1}}(p_i)$, $p_{i+1}\in J^{+}_{U_{i+1}}(r)$,
%not belonging to $\eta_i$, then $p_i$ is joined to $r$  by a
%geodesic $\alpha$ in $U_{i+1}$ and in turn $r$ is joined to
%$p_{i+1}$ by a geodesic $\beta$ in $U_{i+1}$. These geodesics cannot
%join smoothly at $r$ otherwise $r$ would belong to a geodesic
%connecting $p_i$ to $p_{i+1}$ which being unique must be $\eta_{i}$,
%a contradiction. Thus $\alpha$ and $\beta$ do not join smoothly at
%$r$ and hence $\beta \circ \alpha$ does not maximize the Lorentzian
%length of the causal curves connecting $p_i$ to $p_{i+1}$ contained
%in $U_{i+1}$ (thus the maximum is positive).  As $\eta_i$ maximizes
%the Lorentzian length it must be timelike.

Let us show that the presence of one timelike segment $\eta_i$
implies $(p,q)\in I^{+}$. This is so because from the curve made of
geodesic segments $\eta_i$ one can construct a piecewise curve made
of timelike geodesic segments. Indeed, one starts from $\eta_i$ and
translates slightly the final point of $\eta_{i-1}$ along $\eta_i$ so
that the new connecting $\eta'_{i-1}$ becomes timelike (as the
Lorentzian-Finsler distance between the new endpoints is necessarily
positive). Analogously, one translates slightly the starting point of
$\eta_{i+1}$ along $\eta_i$ so that the new connecting $\eta'_{i+1}$
becomes timelike. Then one continues in this way by taking as
reference the timelike geodesic segments $\eta'_{i-1}$ or
$\eta'_{i+1}$. The corners of the so obtained piecewise timelike
curve can be finally smoothed out.

Also note that if all the segments $\eta_i$ are lightlike but do not
join smoothly then one can, arguing as above, replace one lightlike
segment with one timelike segment by moving slightly the starting
endpoint along the previous segment.

In conclusion if $(p,q)\notin I^{+}$ the continuous causal curve
must be coincident with a lightlike geodesic connecting $p$ to $q$.
This geodesic must be achronal, otherwise there is a timelike curve
$\sigma$ connecting $p',q'\in \gamma$. The continuous causal curve
connecting $p$ to $p'$ following $\gamma$ and $p'$ to $q'$ following
$\sigma$ and $q'$ to $q$ following $\gamma$ is, by the just proved
result, a lightlike geodesic which is impossible since $\sigma$ is
timelike. The contradiction proves that $\gamma$ is achronal.
\end{proof}

Under the assumption of the previous theorem we have:
\begin{corollary}
If $p\ll r$ and $r \le q$ then $p \ll q$. If $p\le r$ and $r\ll q$
then $p\ll q$.
\end{corollary}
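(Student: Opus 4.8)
The plan is to derive both implications from the Lemma stated just above (the one asserting that for $p<q$ with $(p,q)\notin I^+$ every future directed continuous causal curve from $p$ to $q$ is an achronal future directed lightlike geodesic up to parametrization) together with Theorem~\ref{bpo}, which identifies future directed causal AC-curves with future directed continuous causal curves.

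First I would dispose of the trivial cases. In the first statement, if $r=q$ the future directed $C^1$ timelike curve witnessing $p\ll r$ already witnesses $p\ll q$; in the second, if $p=r$ the timelike curve witnessing $r\ll q$ witnesses $p\ll q$. So in the remaining cases we have a genuine future directed $C^1$ timelike curve $\alpha$ and a genuine future directed $C^1$ causal curve $\beta$, and after concatenation and reparametrization we obtain a piecewise-$C^1$, hence absolutely continuous, future directed causal curve $\gamma\colon[0,1]\to M$ from $p$ to $q$ (the tangent is future directed causal everywhere except at the single junction point). By Theorem~\ref{bpo}, $\gamma$ is a future directed continuous causal curve from $p$ to $q$, and its image contains the image of the timelike piece $\alpha$.

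Now suppose, for contradiction, that $(p,q)\notin I^+$. If $p=q$, then $\gamma$ is a closed continuous causal loop through $p$ which contains a timelike segment; running the deformation argument from the proof of the Lemma (slightly displacing the shared endpoints of adjacent geodesic segments along a timelike segment and smoothing the corners) yields a closed timelike curve through $p$, i.e.\ $p\ll p=q$, contradicting $(p,q)\notin I^+$. Hence $p\ne q$, so $p<q$ (the curve $\gamma$ being causal), and the Lemma applies: the curve $\gamma$ must be an achronal future directed lightlike geodesic up to parametrization. But the endpoints of $\alpha$ lie on the image of $\gamma$ and are joined by the timelike curve $\alpha$, so the image of $\gamma$ is not achronal — a contradiction. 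Therefore $(p,q)\in I^+$, i.e.\ $p\ll q$.

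The second statement is proved identically, with the timelike and causal pieces exchanged: concatenate the $C^1$ causal curve from $p$ to $r$ with the $C^1$ timelike curve from $r$ to $q$, pass to a continuous causal curve $\gamma$ via Theorem~\ref{bpo}, and conclude as above that the presence of the timelike segment contradicts the achronality forced by the Lemma when $(p,q)\notin I^+$. The only step requiring slight care is the causal-loop subcase $p=q$, which is not covered by the Lemma's conclusion directly and must instead invoke the corner-smoothing deformation from the Lemma's proof; apart from that, everything reduces to routine appeals to the Lemma and to Theorem~\ref{bpo}, so I do not anticipate a genuine obstacle.
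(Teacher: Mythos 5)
Your proof is correct and follows essentially the same route as the paper: concatenate the timelike and causal pieces into a causal (AC, hence continuous causal) curve from $p$ to $q$ and invoke the preceding Lemma to force a contradiction when $(p,q)\notin I^+$. The paper extracts the contradiction from the curve failing to be a lightlike geodesic (it is timelike somewhere) rather than from non-achronality, and it does not spell out the degenerate cases $r=q$, $p=r$, $p=q$ that you handle explicitly, but these are inessential variations.
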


\begin{proof}
It follows from the fact that the composition of a timelike and a
causal curve, in whatever order, gives a causal curve which is not a
lightlike geodesic as at some points it is timelike.
\end{proof}

The question as to whether causality theory could be mindlessly
generalized to $C^{1,1}$ spacetime metrics was considered by
Chru\'sciel and Grant \cite{chrusciel12} among others. They
developed a generalization of causality theory to continuous metrics
and found that classical results involving the existence of normal
neighborhoods cannot be proved in their framework. They observed:
\begin{quote}
We note that several statements in [causality theory] concerning
geodesics remain true for $C^{1,1}$ metrics; it is conceivable that
all of them remain true, but justifications would be needed.
\end{quote}
We  provided arguments which prove the correctness of this
expectation. Indeed, the existence of convex normal neighborhoods is
the central technical tool which allows one to complete many local
arguments, such as that on the openness of the chronology relation,
over which causality theory is based. Once the existence of convex
normal neighborhoods has been established, and the local maximization
property of causal geodesics has been obtained, most (if not all)
results of causality theory follow without any substantial
alteration to their classical proofs. Of course those results that
can be expressed only through the use of the second derivative of
the metric, for instance because they use the curvature tensor,
would require further discussion (especially in the Finsler case).

Working with continuous metrics as  in \cite{chrusciel12} expands
very much causality theory though there is a price to be paid. Some
desirable results do not hold anymore, for instance lightlike
geodesics are not necessarily locally achronal (for Lipschitz
connections this result is guaranteed by Theorem \ref{pwa}). For
other differences the reader is referred to \cite{chrusciel12}.

\subsection{Distance balls are convex} \label{nof}

If the spray is the Levi-Civita connection of a Riemannian metric
$g$ it is natural to ask whether the convex neighborhood can be
chosen to be a distance ball, that is, if for sufficiently small
$\delta$, $D_p^{-1}([0,\delta))$ is convex normal. Whitehead gave a
positive answer to this problem through a proof which demands quite
strong differentiability properties on $D_p$ and hence on the
metric. He reasons that it is sufficient to introduce normal
coordinates $(y^1,\cdots, y^n)$ on $N$, because in this way the
distance balls become coincident with the coordinate balls (we
proved the existence of  convex normal neighborhood constructing
them as coordinate balls). Unfortunately, we can apply our argument
which shows the convexity of coordinate balls only for charts which
are $C^{2,1}$, thus the exponential map would have to be $C^{2,1}$
and hence the metric would have to be $C^{3,1}$ (and the manifold
$C^{4,1}$). A different approach \cite{sakai96,wang12} demands just
the twice continuous differentiability of $D_p^2$, but under our
assumptions $D_p^2$ turns out to be just $C^{1,1}$. Finally, one
could invoke Morse's Lemma so as to find a coordinate system in
which the level sets of $D_p^2$ are coordinate spheres
\cite{milnor69}. Unfortunately, this lemma applies only if $D_p^2$
is $C^2$.

Nevertheless, we shall prove that $D_p^2$ is strongly convex using a
Picard-Lindel\"of analysis.

Let us recall \cite{fan03} that a real function defined on an open
convex set $C$ of an affine space $A$ is strongly convex with
constant $\lambda$ if there is a $\lambda>0$ such that for every
$x,y \in C$, $\alpha \in [0,1]$,
\[
f(\alpha x+(1-\alpha) y)\le \alpha f(x)+(1-\alpha)
f(y)-\frac{\lambda}{2}\, \alpha (1-\alpha) \Vert x-y\Vert^2.
\]
A $C^1$ real function is strongly convex on $C$ with constant
$\lambda>0$ if and only if its differential $\dd f$ is strongly
monotone with constant $\lambda$, that is, for every $x,y \in C$,
\[
[\dd f (y)-\dd f(x)]\cdot(y-x)\ge \lambda \Vert y-x\Vert^2.
\]
In a Riemannian space we have analogous definitions and results
\cite[Prop.\ 16.2]{villani09}. A real function defined on an open
geodesically convex set $C$ is geodesically strongly convex with
constant $\lambda$ if there is $\lambda>0$ such that for every
geodesic $x: J\to C$, $\alpha \in [0,1]$,
\[
f( x((1-\alpha)a+\alpha b))\le (1-\alpha) f(x(a))+\alpha
f(x(b))-\frac{\lambda}{2} \,\alpha (1-\alpha) D(x(a),x(b))^2.
\]
A $C^1$ real function is geodesically strongly convex on $C$ with
constant $\lambda>0$ if and only if its differential $\dd f$ is
geodesically strongly monotone with constant $\lambda$, that is, for
every arc-length parametrized geodesic $x: J\to C$,
\[
\dd f(\dot{x}(b))- \dd f(\dot{x}(a)) \ge \lambda D(x(a),x(b)).
\]
The proof of the equivalence is obtained applying the result for the
 affine space case to the  composition $f(x(t))$.

We shall prove

\begin{theorem} \label{keg}
Let  $M$ be a $C^{2,1}$-manifold endowed with a $C^{1,1}$ Riemannian
metric $g$ and corresponding locally Lipschitz Levi-Civita
connection. Let $p\in M$ and let $\epsilon>0$. Let $x^\mu:U\to
\mathbb{R}^n$ be a chart in a neighborhood of $p$ such that,
$g_{\alpha \beta}(p)=\delta_{\alpha \beta}$, $\Gamma^\mu_{\alpha
\beta}(p)=0$. Let $C\subset U$ be a coordinate ball around $p$ which
we already know to be convex normal for sufficiently small radius.
We also have that for sufficiently small radius  for every $q,
q_1,q_2\in C$, interpreting the minus sign and scalar product
through the Euclidean structure induced by the coordinate system on
$C$
\begin{equation} \label{coa}
 \vert [d D^2_q(q_2)- d D^2_q(q_1)] (q_2-q_1)-2(q_2-q_1)^2
\vert  \le \epsilon (q_2-q_1)^2,
\end{equation}
and for every $q\in C$ and arc-length parametrized geodesic $x:J\to
C$
\begin{equation} \label{cob}
 \vert \nabla_{\dot{x}(b)} D^2_q(x(b)) - \nabla_{\dot{x}(a)}
D^2_q(x(a)) -2 D(x(a),x(b)) \vert
 \le \epsilon D(x(a),x(b)).
\end{equation}
In particular, $D^2_q:C\to [0,+\infty)$ is strongly convex with
parameter $\lambda=2-\epsilon$ with respect to {\em both} the
Euclidean
\[
D(q, (1-\alpha) q_1+\alpha q_2)^2\le (1-\alpha) D(q,q_1)^2+\alpha
D(q,q_2)^2-(1-\frac{\epsilon}{2}) \,\alpha (1-\alpha) \Vert
q_2-q_1\Vert^2,
\]
and the Riemannian structures of $C$
\begin{align*}
D(q, x((1-\alpha) a+\alpha b))^2\le (1-\alpha) D(q,x(a))^2&+\alpha
D(q,x(b))^2\\&-(1-\frac{\epsilon}{2}) \,\alpha (1-\alpha)
D(x(0),x(1))^2.
\end{align*}
Thus for any sufficiently small $r$ the open balls $D_p^{-1}([0,r))$ are
contained in $C$ and are (strictly) convex normal neighborhoods.
\end{theorem}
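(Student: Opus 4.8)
The plan is to derive the two quantitative second-order estimates~(\ref{coa}) and~(\ref{cob}) directly from Gauss' Lemma and the strong differentiability of the position vector, and then to read off everything else formally: the strong convexity of $D^2_q$ in the two senses is the recalled equivalence with strong monotonicity of the differential, while the convex-normality of small distance balls is the classical sub-level-set argument, with ``strictly convex function'' replaced by its $C^{1,1}$ quantitative version. Throughout I write $\kappa:=\sup_C\Vert\Gamma\Vert$, which tends to $0$ as $C$ shrinks to $\{p\}$ since $\Gamma$ is continuous and $\Gamma(p)=0$, and I shall freely shrink the coordinate ball $C$ so that also $g$ is as close to $\delta$ on $C$ as desired.

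\emph{Step 1: the estimate (\ref{coa}).} By Gauss' Lemma (Theorem~\ref{gux}) the function $D^2_q$ is $C^{1,1}$ on $C$ with $dD^2_q(q')=2\,g_{P(q,q')}(P(q,q'),\cdot)$, which in coordinates (the fundamental tensor being direction-independent here) reads $[dD^2_q(q')]_\nu=2\,g_{\mu\nu}(q')P^\mu(q,q')$, so that
\[
[dD^2_q(q_2)-dD^2_q(q_1)](q_2-q_1)=2\big[g_{\mu\nu}(q_2)P^\mu(q,q_2)-g_{\mu\nu}(q_1)P^\mu(q,q_1)\big](q_2-q_1)^\nu.
\]
The key input is that $(q,q')\mapsto P(q,q')$ is strongly differentiable at $(p,p)$ --- the content of the Remark following Theorem~\ref{gux}, via Proposition~\ref{juy} --- with strong differential $(a,b)\mapsto b-a$; the value in the $q'$-slot is the identity because $\exp_p$ is strongly differentiable at the origin with unit differential (Theorem~\ref{jfd}) and $\dot\gamma_v(1)=v+O(\kappa\Vert v\Vert^2)$ by the geodesic equation, and the $q$-slot is symmetric. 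Strong differentiability at $(p,p)$ (with the first argument held equal to $q$) yields, on a small enough $C$, the uniform bounds $\Vert P(q,q_2)-P(q,q_1)-(q_2-q_1)\Vert\le\epsilon'\Vert q_2-q_1\Vert$ and $\Vert P(q,q_i)\Vert\le(1+\epsilon')\Vert q_i-q\Vert$. Splitting $g_{\mu\nu}(q_2)P^\mu(q,q_2)-g_{\mu\nu}(q_1)P^\mu(q,q_1)$ as $\delta_{\mu\nu}[P^\mu(q,q_2)-P^\mu(q,q_1)]$ plus $(g_{\mu\nu}(q_2)-\delta_{\mu\nu})[P^\mu(q,q_2)-P^\mu(q,q_1)]$ plus $(g_{\mu\nu}(q_2)-g_{\mu\nu}(q_1))P^\mu(q,q_1)$, the first piece produces $(q_2-q_1)_\nu$, the second is $O(\epsilon'\Vert q_2-q_1\Vert)$, and --- crucially using that $g$ is Lipschitz, not merely continuous --- the third is $O((\mathrm{diam}\,C)\Vert q_2-q_1\Vert)$. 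Contracting with $(q_2-q_1)^\nu$, all errors are $O((\epsilon'+\mathrm{diam}\,C)\Vert q_2-q_1\Vert^2)$, which gives~(\ref{coa}) after shrinking $C$. This is the ``gain of one degree'': the Lipschitz continuity of $g$ converts the a priori merely linear error coming from the varying metric into a quadratic one.

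\emph{Step 2: (\ref{cob}) and strong convexity.} For an arc-length geodesic $x\colon J\to C$ put $h(t):=D^2_q(x(t))$, so $h'(t)=dD^2_q(x(t))[\dot x(t)]=\nabla_{\dot x(t)}D^2_q(x(t))$. Integrating the geodesic equation on $C$ gives $\dot x(b)=\dot x(a)+O(\kappa(b-a))$ and $x(b)-x(a)=(b-a)\dot x(b)+O(\kappa(b-a)^2)$, with $\Vert x(b)-x(a)\Vert=D(x(a),x(b))(1+O(\epsilon'+\mathrm{diam}\,C))$. Splitting $h'(b)-h'(a)=[dD^2_q(x(b))-dD^2_q(x(a))][\dot x(b)]+dD^2_q(x(a))[\dot x(b)-\dot x(a)]$, replacing $\dot x(b)$ by $(b-a)^{-1}(x(b)-x(a))$ in the first bracket at the cost of $O(\kappa(b-a)^2)$ (the covector being $O(b-a)$ by the Step~1 bounds), and applying~(\ref{coa}) with $q_1=x(a),q_2=x(b)$, the first term becomes $2(b-a)+O((\epsilon+\mathrm{diam}\,C)(b-a))$; the second term is $O((\mathrm{diam}\,C)\kappa(b-a))$ since $\Vert dD^2_q(x(a))\Vert=O(\mathrm{diam}\,C)$. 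Hence $\vert h'(b)-h'(a)-2(b-a)\vert\le\epsilon D(x(a),x(b))$ after a further shrinking of $C$, i.e.\ (\ref{cob}). The two strong-convexity inequalities with $\lambda=2-\epsilon$ now follow from~(\ref{coa}) and from~(\ref{cob}) applied to $t\mapsto h(t)$, using the stated equivalence between strong convexity and strong monotonicity of the differential.

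\emph{Step 3: distance balls, and the main obstacle.} Choose $\delta'$ with $\bar B(p,\delta')\subset C$; since $D_p>0$ on the compact set $\partial B(p,\delta')$ (Theorem~\ref{pwa}), say $D_p\ge m>0$ there, and $D_p(\gamma_v(s))=s\sqrt{g_v(v,v)}$ is strictly increasing along radial geodesics, for $r\le m$ every $q$ with $D_p(q)<r$ is joined to $p$ by a radial geodesic staying in $D_p^{-1}([0,r))$ and never meeting $\partial B(p,\delta')$; thus $D_p^{-1}([0,r))\subset B(p,\delta')\subset C$ is an open neighbourhood of $p$. Given $q_1,q_2\in D_p^{-1}([0,r))$, the unique $C$-geodesic joining them stays in $C$ and, by Step~2, $t\mapsto D^2_p(x(t))$ is strongly convex, hence $<r^2$ throughout; so $D_p^{-1}([0,r))$ is geodesically convex (and coordinate-convex, by the Euclidean inequality). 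Finally, for each $q\in D_p^{-1}([0,r))$ let $N'_q$ be the star-shaped domain on which $\exp_q$ is a Lipeomorphism onto $C$ (Theorem~\ref{nsx}); then $N_q:=(\exp_q|_{N'_q})^{-1}(D_p^{-1}([0,r)))$ is star-shaped, because $\exp_q v\in D_p^{-1}([0,r))$ forces the whole segment $s\mapsto\exp_q(sv)$, $s\in[0,1]$, into $D_p^{-1}([0,r))$ by geodesic convexity, and $\exp_q\colon N_q\to D_p^{-1}([0,r))$ is a Lipeomorphism; hence $D_p^{-1}([0,r))$ is a normal neighbourhood of each of its points, i.e.\ convex normal, with strictness inherited from $C$. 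The main obstacle is Step~1: $P$ is only a Lipschitz object, and pinning it down near the diagonal finely enough to expose the clean Hessian $2g$ needs both its strong differentiability there (Proposition~\ref{juy}, which itself rests on the Picard-Lindel\"of approximation) and the Lipschitz regularity of $g$; granted these, (\ref{coa}) is a substitution, (\ref{cob}) an elementary reduction through the geodesic equation, and Step~3 is the textbook sub-level-set argument.
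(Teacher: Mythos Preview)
Your proof is correct. Step~1 is essentially the paper's own argument: the same splitting of $g_{q_2}P(q,q_2)-g_{q_1}P(q,q_1)$, fed by the strong differentiability of $P$ on the diagonal (Proposition~\ref{juy}/Theorem~\ref{nsc}) and the smallness of $\Vert g-I\Vert$ and of the local Lipschitz variation of $g$. Step~3 is the standard sub-level-set argument, which the paper only states.

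Step~2 is where you take a genuinely different route. The paper proves (\ref{cob}) by a second direct position-vector computation parallel to that for (\ref{coa}), but now using in addition the quadratic estimate $\Vert P(q_1,q_2)+P(q_2,q_1)\Vert\le\epsilon\Vert q_2-q_1\Vert^2$ (Eq.~(\ref{ksf}), available precisely because $\Gamma(p)=0$): one replaces $P(q_1,q_2)$ by $-P(q_2,q_1)$ at quadratic cost and then applies (\ref{ksc})--(\ref{ksd}) again. You instead reduce (\ref{cob}) to the already-established (\ref{coa}) by writing $\dot x(b)=(b-a)^{-1}(x(b)-x(a))+O(\kappa(b-a))$ from the geodesic equation and controlling every remainder through $\kappa=\sup_C\Vert\Gamma\Vert\to 0$. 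Your approach is a bit more economical (it recycles (\ref{coa}) rather than repeating the position-vector algebra, and never touches (\ref{ksf})); the paper's approach keeps the proofs of (\ref{coa}) and (\ref{cob}) methodologically uniform and isolates (\ref{ksf}) as a reusable ingredient, which pays off because the identical manipulation reappears in the Lorentzian case (Theorem~\ref{igy}). One small bookkeeping point: when you ``apply (\ref{coa})'' inside the derivation of (\ref{cob}) and then ``further shrink $C$'', you are tacitly invoking (\ref{coa}) with a smaller parameter than the target $\epsilon$; this is harmless but worth saying explicitly.
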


As the balls $D_p^{-1}([0,r))$ are convex we can infer a number of
equivalent convexity properties thanks to the equivalences for
metric spaces proved in \cite{foertsch04}, for instance $D_p$ is
itself convex.

We have the following improvement of our previous formulation of
Gauss' Lemma (Theorem \ref{gux}) which shows that the direct product
sum for the metric can be accomplished on a sphere of a chosen
radius $\bar{r}$ and almost everywhere outside it.
\begin{theorem} \label{oii}
With the notations of the previous theorem, the levels sets
$D_p^{-1}(r)$ are $C^{1,1}$ hypersurfaces diffeomorphic to $S^{n-1}$
with an induced Lipschitz metric $h_r$. Locally on $C$ we can always
find  $C^{2,1}$ functions $\theta_i$ $i=1,\cdots, n-1$, such that in
the $C^{1,1}$ chart $(r,\theta_1,\cdots, \theta_{n-1})$ the metric
takes the form
\begin{equation} \label{kiv}
g=d r^2+(h_r)_{ij}( \dd \theta_i-A_i \dd r) ( \dd \theta_j-A_j \dd
r),
\end{equation}
where $h_r=(h_r)_{ij} \dd \theta_i \dd \theta_j$ and  all the
components $(h_r)_{ij}$, $A_i$, are Lipschitz in $(r,\theta)$.

For any chosen sufficiently small radius $\bar{r}$ new  Lipschitz
coordinates $(r,\alpha_1$, $\cdots,\alpha_{n-1})$ can be found such
that $\{\alpha_i\}$ provide a $C^{1,1}$ chart on $D_p^{-1}(\bar{r})$
and  the metric takes the direct sum form
\[
g=\dd r^2+(h_r(\alpha))_{ij}\,\dd \alpha^i \dd \alpha^j,
\]
where the components $(h_r(\alpha))_{ij}$ are defined almost
everywhere and are bounded (this holds also for spherical normal
coordinates) and for $r=\bar{r}$ they are defined everywhere and are
Lipschitz in $\alpha$.
\end{theorem}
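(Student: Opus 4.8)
The plan is to build both coordinate systems by hand, feeding in what is already proven: that $D^2_p$ is $C^{1,1}$ with $\dd D^2_p=2g_{P}(P,\cdot)$ (Theorem \ref{gux}, Eqs.\ (\ref{jeg})--(\ref{jwf})), and that $D^2_p$ is strongly convex in the coordinate affine structure (inequality (\ref{coa}) of Theorem \ref{keg}). First, the level sets. Since $D^2_p$ is $C^{1,1}$ on $C$ and strictly positive on $C\setminus\{p\}$, the function $r:=D_p=\sqrt{D^2_p}$ is $C^{1,1}$ there with $\dd D_p=\tfrac1{2D_p}\dd D^2_p=\tfrac1{D_p}g_P(P,\cdot)\ne 0$; hence each $D_p^{-1}(r)$ is a $C^{1,1}$ hypersurface of the $C^{2,1}$ manifold $M$, inheriting a $C^{1,1}$ structure, and its induced metric $h_r$ — the restriction of the tensor $g$, which is $C^{1,1}$ and so Lipschitz in $x^\mu$, to a $C^{1,1}$ parametrization — has Lipschitz components. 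Setting $q_1=q=p$ in (\ref{coa}) and using $\dd D^2_p(p)=0$ gives $\dd D^2_p(q)\cdot(q-p)\ge(2-\epsilon)\Vert q-p\Vert^2>0$ for $q\ne p$, so $D_p$ strictly increases along each coordinate ray from $p$; therefore the coordinate angular projection $\Pi\colon q\mapsto(q-p)/\Vert q-p\Vert$ restricts to a continuous bijection $D_p^{-1}(r)\to S^{n-1}$, and the same inequality shows the radial direction is never tangent to $D_p^{-1}(r)=\ker\dd D^2_p$, whence $\Pi|_{D_p^{-1}(r)}$ has everywhere invertible differential and, being bijective, is a $C^{1,1}$-diffeomorphism onto $S^{n-1}$.

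Next, the chart $(r,\theta_i)$ and formula (\ref{kiv}). Choose $\theta_1,\dots,\theta_{n-1}$ to be the $\Pi$-pullbacks of smooth local coordinates on $S^{n-1}$: they are $C^\infty$ in the $C^{2,1}$ chart $x^\mu$, hence $C^{2,1}$ functions on $M$, and restrict to $C^{1,1}$ charts on each level set. Then $(r,\theta_1,\dots,\theta_{n-1})$ is $C^{1,1}$ and has everywhere invertible differential ($\dd r\ne 0$, while on each level set $\dd r=0$ and the $\dd\theta_i$ form a coframe), so by the inverse function theorem and the Lipschitzness of its Jacobian it is a $C^{1,1}$ chart on a piece of $C$; since the change from $x^\mu$ is $C^{1,1}$, $g$ acquires Lipschitz components $g_{rr},g_{r\theta_i},g_{\theta_i\theta_j}$. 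Put $(h_r)_{ij}:=g_{\theta_i\theta_j}$ (this is $h_r$, positive definite and Lipschitz) and $A_i:=-(h_r)^{ij}g_{r\theta_j}$; expanding the right-hand side of (\ref{kiv}) reproduces $g$ exactly when $g_{rr}-(h_r)^{ij}g_{r\theta_i}g_{r\theta_j}=1$, i.e.\ (Schur complement) $g^{rr}=\Vert\dd D_p\Vert^2_g=1$. This last identity is where Gauss' Lemma is used: by (\ref{jwf}), $\Vert\dd D_p\Vert^2_g=\tfrac1{D_p^2}\,g(P,P)$, while $g(P,P)=g_{\exp_p^{-1}(q)}(\exp_p^{-1}(q),\exp_p^{-1}(q))=D^2_p(q)$ — the first equality because $g(\gamma',\gamma')$ is constant along the connecting geodesic $\gamma$, the second by (\ref{jeg}); hence $\Vert\dd D_p\Vert_g=1$ and (\ref{kiv}) follows.

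Finally, the Fermi-type coordinates at $D_p^{-1}(\bar r)$. Raising the index in (\ref{jwf}), $\nabla D^2_p=2P$, so $X:=\nabla D_p=P/D_p$ is a unit Lipschitz field on a neighborhood of $D_p^{-1}(\bar r)$ (away from $p$) with $\dd D_p(X)=\Vert\dd D_p\Vert^2_g=1$; its flow $\Psi_\tau$ is given by a Lipschitz ODE, hence (as throughout this work) depends bi-Lipschitzly on $(\tau,\cdot)$, and $D_p(\Psi_\tau(q_0))=D_p(q_0)+\tau$. Fix a $C^{1,1}$ chart $\alpha=(\alpha_i)$ on a piece $\Sigma$ of $D_p^{-1}(\bar r)$ and let $\Phi(q_0,\tau):=\Psi_\tau(q_0)$, declaring the coordinates of $\Phi(q_0,\tau)$ to be $(r,\alpha_i):=(\bar r+\tau,\alpha_i(q_0))$; then $\Phi$ is a Lipeomorphism onto a neighborhood of $\Sigma$ in $C$, its inverse sending $q$ to $\bigl(\Psi_{-(D_p(q)-\bar r)}(q),\,D_p(q)-\bar r\bigr)$, a composition of Lipschitz maps, so $(r,\alpha_i)$ is a Lipschitz chart. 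Here $\partial_r=X$, so $g(\partial_r,\partial_r)=1$ and $g(\partial_r,\partial_{\alpha_i})=\dd D_p(\partial_{\alpha_i})=\dd r(\partial_{\alpha_i})=0$; writing $(h_r(\alpha))_{ij}:=g(\partial_{\alpha_i},\partial_{\alpha_j})$ gives the direct-sum form. The coordinate fields exist a.e.\ (Rademacher applied to $\Phi$) and are bounded, and $g$ is continuous, so the $(h_r(\alpha))_{ij}$ are defined a.e.\ and bounded — likewise if spherical normal coordinates are used instead — while on $\{r=\bar r\}$ the map $\Phi(\cdot,0)$ is the $C^{1,1}$ chart-inverse of $D_p^{-1}(\bar r)$ and $\partial_\tau\Phi|_{\tau=0}=X$ exists everywhere (since $\Psi_\tau(q_0)=q_0+\tau X(q_0)+O(\tau^2)$ uniformly), so $\partial_{\alpha_i}|_{r=\bar r}$ is the Lipschitz coordinate field of that $C^{1,1}$ chart and $(h_{\bar r}(\alpha))_{ij}$ is everywhere defined and Lipschitz in $\alpha$.

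The conceptual input — orthogonality of radial geodesics to the level sets and unit speed of $\nabla D_p$ — is entirely supplied by the already proven Gauss' Lemma, so the main obstacle is regularity bookkeeping: verifying that $(r,\theta)$ is genuinely a $C^{1,1}$ chart and no worse, that the Fermi map $\Phi$ is a Lipeomorphism (which rests, as elsewhere in the paper, on the bi-Lipschitz dependence on initial data of solutions of a merely Lipschitz ODE), and cleanly separating the ``almost everywhere, bounded'' statement valid on all of $C$ from the ``everywhere, Lipschitz'' statement valid on the single level set $\{r=\bar r\}$, where the $C^{1,1}$ structure of that hypersurface is at our disposal although the ambient Fermi coordinates are only Lipschitz transversally.
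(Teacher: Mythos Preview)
Your proof is correct and follows essentially the same route as the paper: angular coordinates pulled back from $S^{n-1}$ give the $C^{2,1}$ functions $\theta_i$, the identity $g^{rr}=1$ from Gauss' Lemma yields (\ref{kiv}), and flowing along $\nabla D_p$ from the level set $D_p^{-1}(\bar r)$ produces the Fermi-type coordinates. The only cosmetic differences are that the paper derives (\ref{kiv}) by first writing $g^{-1}$ and then inverting (rather than your Schur-complement computation), and phrases the Fermi construction as solving the coordinate ODE $\partial\theta_i/\partial r=A_i(r,\theta)$ with data $\theta_i(\bar r,\alpha)=\alpha_i$ (rather than as the flow of $X=\nabla D_p$); since $\nabla r=\partial_r+A_i\partial_{\theta_i}$ in the $(r,\theta)$ chart, these are the same flow.
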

%
%
%An Alexandrov space $(X,D)$ is a complete metric space whose
%sectional curvature is bounded from below by a constant $k\in
%\mathbb{R}$ where this condition is made rigorous through the
%comparison of the geodesic triangles of the space with those of a
%reference space with constant sectional curvature $k$. It turns out
%that this condition implies that $X$ is a geodesic space, namely a
%length space which realizes the distance between any pair of points
%through a curve which is called geodesic.
%
%
%The sectional curvature of $(X,D)$ is bounded from below by $k=0$ if
%for every $q\in X$ and for every geodesic $x: [0,1] \to X$
%\[
%D(q,x(t))^2\ge (1-t)  D(q,x(0))^2+tD(q,x(1))^2-t(1-t)
%D(x(0),x(1))^2.
%\]
%For $k<0$ it reads
%\[
%\cosh(\vert k\vert^{1/2}\,D(q,x(t)))\ge \frac{\sinh (1-t) \vert
%k\vert^{1/2}}{}
%\]

\subsection{Convexity on Lorentzian manifolds}
In this section we study the problem of the existence of
convex/concave functions and sets in Lorentzian manifolds. The
Lorentzian case is more involved than the Riemannian but is
essential for the understanding of Lorentzian manifolds under low
differentiability conditions.

 In the next theorem $\eta_{\alpha \beta}$ is the usual
Minkowski metric in diagonal form ($\eta_{00}=-1$, $\eta_{ii}=+1$
for $i\ge 1$).
\begin{theorem} \label{igy}
Let  $M$ be a $C^{2,1}$-manifold endowed with a $C^{1,1}$ Lorentzian
metric $g$ and corresponding Lipschitz Levi-Civita  connection. Let
$p\in M$ and let $\epsilon>0$. Let $x^\mu:U\to \mathbb{R}^n$ be a
chart in a neighborhood of $p$ such that, $g_{\alpha
\beta}(p)=\eta_{\alpha \beta}$, $\Gamma^\mu_{\alpha \beta}(p)=0$.
Let $C\subset U$ be a coordinate ball around $p$ which we already
know to be convex normal for sufficiently small radius. We also have
that for sufficiently small radius, for every geodesic $x: [0,1]\to
C$,  and for every $q\in C$
\begin{equation} \label{pat}
\vert \frac{\dd }{\dd t}D^2_q(x(t))\vert_{t=1}-\frac{\dd }{\dd t}
D^2_q(x(t))\vert_{t=0}-2D^2(x(0),x(1)) \vert\le \epsilon \Vert
x(1)-x(0)\Vert^2,
\end{equation}
where the Euclidean and affine structure of the coordinate chart is
used just on the right-hand side.
\end{theorem}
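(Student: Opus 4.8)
The plan is to rerun in the Lorentzian setting the Picard--Lindel\"of estimate behind inequality (\ref{coa}) of Theorem \ref{keg}, exploiting that in the chart of the statement $\Gamma^\mu_{\alpha\beta}(p)=0$. By cyclic permutation of indices this forces $\partial_\gamma g_{\alpha\beta}(p)=0$, so since $g$ is $C^{1,1}$ one has on the coordinate ball $C=B(p,\delta)$ the uniform estimates $g_{\alpha\beta}=\eta_{\alpha\beta}+O(\delta^2)$, $\Gamma^\mu_{\alpha\beta}=O(\delta)$ and $H^\mu(x,v)=O(\delta\,\Vert v\Vert^2)$, the implicit constants being fixed by the Lipschitz constants on a compact neighborhood of $p$. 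Fix a geodesic $x\colon[0,1]\to C$ and set $\xi:=x(1)-x(0)$, $\ell:=\Vert\xi\Vert\le 2\delta$. Since $C$ is convex normal and $\exp$ a Lipeomorphism there (Theorems \ref{nsx}, \ref{jfd}), $\dot x(0)=\exp_{x(0)}^{-1}(x(1))$ with $\Vert\dot x(0)\Vert\le K\ell$ for a uniform $K$; integrating $\ddot x=H(x,\dot x)$ twice then gives, on $[0,1]$,
\begin{equation*}
\dot x(t)=\xi+O(\delta\ell^2),\qquad x(t)=x(0)+t\,\dot x(0)+O(\delta\ell^2).
\end{equation*}

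Next I would invoke Gauss' Lemma in the form of Theorem \ref{gux}: $D^2_q$ is $C^{1,1}$ and $dD^2_q(q')=2g(q')(P(q,q'),\cdot)$, with $P(q,q')=\gamma'_{\exp_q^{-1}q'}(1)$ the position vector; composing with $x$, the function $f(t):=D^2_q(x(t))$ is $C^{1,1}$ with $f'(t)=2g(x(t))(P(q,x(t)),\dot x(t))$, so the left side of (\ref{pat}) is the well-defined quantity $f'(1)-f'(0)-2D^2_{x(0)}(x(1))$. The Picard iteration for $\gamma_v$ gives $\exp_q v=q+v+O(\delta\Vert v\Vert^2)$, hence for $q'=\exp_q v\in C$ that $P(q,q')=(q'-q)+O(\delta\Vert q'-q\Vert^2)$; fed into definition (\ref{jeg}) this yields $D^2_{x(0)}(x(1))=\eta(\xi,\xi)+O(\delta^2\ell^2)$. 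The delicate input is the increment of the position vector along the geodesic: here I would use that $P$ is strongly differentiable on the diagonal of $M\times M$, with strong differential $(v,w)\mapsto w-v$ at $(p,p)$ (the remark following Theorem \ref{gux}, resting on Proposition \ref{juy}). As $q,x(0),x(1)$ all lie within $\delta$ of $p$, this furnishes
\begin{equation*}
\Vert P(q,x(1))-P(q,x(0))-\xi\Vert\le\epsilon'(\delta)\,\ell,\qquad \epsilon'(\delta)\to 0\ \ (\delta\to 0),
\end{equation*}
an estimate unavailable from mere Lipschitzness, and this is the one place where the strong derivative is genuinely needed.

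Finally I would assemble the bound by writing $\tfrac12 f'(t)=\eta(x(t)-q,\xi)+R(t)$, where $R(t)$ gathers all terms containing at least one of the deviations $g-\eta=O(\delta^2)$, $P(q,\cdot)-(\cdot-q)=O(\delta\,\Vert\cdot-q\Vert^2)$, or $\dot x-\xi=O(\delta\ell^2)$. The leading term gives $\eta(x(1)-q,\xi)-\eta(x(0)-q,\xi)=\eta(\xi,\xi)$, matching $D^2_{x(0)}(x(1))$ to within $O(\delta^2\ell^2)$; and in $R(1)-R(0)$, whenever a deviation factor does not already carry a power of $\ell$, one extracts one from the difference through the Lipschitz dependence of $g$ and of $x\mapsto P(q,x)$ along the geodesic --- for the latter exactly the $O(\epsilon'(\delta)\ell)$ increment above, for the former an $O(\delta\ell)$ increment --- rather than by a crude triangle inequality, so that every term is $O\big((\delta+\epsilon'(\delta))\,\ell^2\big)$. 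Hence $|f'(1)-f'(0)-2D^2_{x(0)}(x(1))|\le C(\delta+\epsilon'(\delta))\,\ell^2$, and choosing $\delta$ so small that $C(\delta+\epsilon'(\delta))\le\epsilon$ gives (\ref{pat}). The bookkeeping mirrors that of (\ref{coa}) in Theorem \ref{keg}; the only genuine obstacle is the uniform $o(\ell)$ control of the position-vector increment, which is precisely why one needs strong (not merely Lipschitz) differentiability of $\exp$ and of $P$.
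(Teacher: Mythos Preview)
Your proposal is correct and follows essentially the same strategy as the paper: both arguments compute $\frac{\dd}{\dd t}D^2_q(x(t))$ via Gauss' Lemma (Theorem~\ref{gux}) and then control all the error terms through the strong differentiability of the position vector on the diagonal (Theorem~\ref{nsc} / Proposition~\ref{juy}), the assumption $\Gamma^\mu_{\alpha\beta}(p)=0$ being used to gain the extra power of $\Vert q_2-q_1\Vert$.

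The organization differs slightly. The paper never expands $\dot x(t)$ as $\xi+O(\delta\ell^2)$; instead it uses the exact identities $\dot x(1)=P(x(0),x(1))$ and $\dot x(0)=-P(x(1),x(0))$, so that everything is written in terms of position vectors from the outset, and then it reduces the whole expression to $|g_{q_1}(V,P(q_2,q_1))|$ with $V:=P(q,q_1)-P(q,q_2)-P(q_2,q_1)$, plus lower-order terms. To bound this indefinite inner product by Euclidean norms the paper introduces the auxiliary Riemannian metric $r(v,w)=g(v,w)+2(\partial_0\!\cdot\!v)(\partial_0\!\cdot\!w)$; you sidestep that device by comparing $g$ directly to $\eta$ via $g=\eta+O(\delta^2)$, which is legitimate since $\partial g(p)=0$ and $g$ is $C^{1,1}$. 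Your route requires keeping track of a few more cross terms, while the paper's requires the extra gadget $r$; neither approach needs anything the other does not already implicitly contain.
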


Observe that in the next theorem there is no mention to the Euclidean
or affine structures induced by a coordinate chart (we stress once again that $D^2_p$ can be negative).

\begin{theorem} \label{jse}
Let  $M$ be a $C^{2,1}$-manifold endowed with a $C^{1,1}$ Lorentzian
metric $g$ and corresponding locally Lipschitz Levi-Civita
connection. Let $p\in M$ and let $\epsilon>0$.  Let $\gamma: I\to
M$, $t\mapsto \gamma(t)$, be a  timelike geodesic such that
$p=\gamma(0)$. The convex normal set $C\ni p$ of Theorem \ref{nsx} can be chosen so small
that once $I$ is redefined to be the connected component of
$\gamma^{-1}(C)$ containing $0$, the following property holds.

For every $q,r\in \gamma(I)$, $q\ne r$, there is a strictly convex
normal  set $O\ni r$, $\bar{O}\subset C$, such that  all the points
of $\bar{O}$ are either in the chronological future or in the
chronological past of $q$; every geodesic $x: J\to O$, connecting
two points on the same level set $(D_q^2)^{-1}(c)$, $c<0$, of the
function $D^2_q\colon C\times C \to \mathbb{R}$,  satisfies, once
reparametrized with respect to $g$-arc length ($x$ is necessarily
spacelike by Theorem \ref{pwa}, thus $D(x(a),x(b))\ge 0$), for every $a,b\in J$,
\begin{equation} \label{lor}
  \vert \nabla_{\dot{x}(a)} D^2_q(x(a)) - \nabla_{\dot{x}(b)}
D^2_q(x(b))-2 D(x(a),x(b))\vert \le
 \epsilon D(x(a),x(b)).
\end{equation}
In particular, $D^2_q(x(t))$ is strongly convex with parameter
$2-\epsilon$
\[
D_q^2(x((1-\alpha) a +\alpha b))\le (1-\alpha) D_q^2(x(a))+\alpha
D_q^2(x(b))-(1-\frac{\epsilon}{2}) \,\alpha (1-\alpha)
D(x(a),x(b))^2,
\]
and the sets of the form $(D^2_q)^{-1}((-\infty,c))\cap O$ for $c<0$
are strictly geodesically convex.
\end{theorem}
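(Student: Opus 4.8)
The plan is to reduce the Lorentzian statement to the structure already established in the Riemannian setting (Theorem \ref{keg}), exploiting the fact that along a timelike geodesic $\gamma$ the function $D^2_q$ restricted to the level sets of interest behaves, up to sign, like a Riemannian distance squared on a spacelike slice. First I would fix the timelike geodesic $\gamma$ and, for a point $q=\gamma(s_0)$, observe that by Theorem \ref{gux} the level set $(D^2_q)^{-1}(c)$ with $c<0$ is, near $\gamma$, a $C^{1,1}$ hypersurface on which every tangent vector is $g$-spacelike (the geodesics from $q$ reaching this level set are timelike, so by Gauss' Lemma the orthogonal complement inside the level set is spacelike). This is where Theorem \ref{pwa} enters: any geodesic $x\colon J\to O$ joining two points of the same level set $(D^2_q)^{-1}(c)$ must be spacelike, since a causal such geodesic would contradict the local length-maximization property. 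Hence $D(x(a),x(b))\ge 0$ makes sense and the reparametrization by $g$-arc length is legitimate.

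Next I would set up the Picard--Lindel\"of comparison exactly as in the proof of Theorem \ref{keg}: work in the $C^{2,1}$ chart with $g_{\alpha\beta}(p)=\eta_{\alpha\beta}$ and $\Gamma^\mu_{\alpha\beta}(p)=0$, and estimate $d D^2_q$ and its variation along geodesics using the integral representation of the geodesic flow from Theorem \ref{flo} together with the strong differentiability of $\exp$ on the zero section (Theorem \ref{jfd}). The key quantitative input is the estimate \eqref{pat} of Theorem \ref{igy}, which controls $\frac{\dd}{\dd t}D^2_q(x(t))$ at the endpoints of an affinely parametrized geodesic in terms of the coordinate displacement; from this, by the same interpolation argument used to pass from \eqref{coa} to the strong convexity statement in Theorem \ref{keg}, one obtains \eqref{lor} after switching to $g$-arc length parametrization on the spacelike geodesic $x$. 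The passage from the affine-parameter estimate \eqref{pat} to the arc-length estimate \eqref{lor} uses that on a small convex normal set the ratio between affine parameter length and $g$-arc length of a spacelike geodesic is uniformly close to $1$ (because $g$ is close to $\eta$ and the geodesic stays close to a straight segment), so the error terms can be absorbed into $\epsilon$ after shrinking $C$.

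The conclusion that $D^2_q(x(t))$ is strongly convex with parameter $2-\epsilon$ then follows by the standard equivalence, recalled in Section \ref{nof}, between strong monotonicity of the differential along geodesics and geodesic strong convexity, applied to the one-variable function $t\mapsto D^2_q(x(t))$ (which is $C^{1,1}$ by Theorem \ref{gux}, hence the equivalence applies). Strict geodesic convexity of the sublevel sets $(D^2_q)^{-1}((-\infty,c))\cap O$ is immediate from strong convexity: a geodesic segment with both endpoints in the open sublevel set has its values strictly below the convex combination of the endpoint values, hence strictly below $c$, so it stays in the set, and strictness of the inequality at interior points together with the star-shapedness of $O$ gives the "strictly" refinement. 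The remaining assertions — that $\bar O$ lies entirely in $I^+(q)\cup I^-(q)$ and that $O$ can be taken strictly convex normal with $\bar O\subset C$ — follow by continuity and compactness: since $q\ne r$ and both lie on the timelike geodesic $\gamma$, one is in the timelike future or past of the other by Theorem \ref{pwa}, and by shrinking the strictly convex normal neighborhood of $r$ supplied by Theorem \ref{nsx} we keep its closure inside the chronological cone of $q$ and inside $C$.

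The main obstacle I anticipate is the arc-length reparametrization step: the estimate \eqref{pat} is naturally phrased for affinely parametrized geodesics with a coordinate-Euclidean right-hand side, whereas \eqref{lor} is intrinsic and uses $g$-arc length, and one must check that the nonlinear change of parameter (whose regularity is only $C^{1,1}$, since $D^2_q$ and the geodesic data are only that regular) does not destroy the uniformity of the estimate. Handling this requires care in tracking how the Lipschitz constants in Theorem \ref{flo} propagate through the reparametrization, and in verifying that all the smallness can be made uniform in $q,r\in\gamma(I)$ after the single choice of a sufficiently small $C$; this last uniformity is what forces the statement to fix $C$ first and only then restrict $I$ to the component of $\gamma^{-1}(C)$ through $0$.
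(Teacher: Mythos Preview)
Your overall architecture is right and matches the paper's route through Theorem~\ref{igy} (estimate \eqref{pat}), but there is a genuine gap at exactly the point you flag as ``the main obstacle'', and your proposed fix is incorrect.

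You claim the passage from \eqref{pat} to \eqref{lor} works because ``the ratio between affine parameter length and $g$-arc length of a spacelike geodesic is uniformly close to $1$ (because $g$ is close to $\eta$)''. This is false in Lorentzian signature: even for $g=\eta$ exactly, a spacelike vector $v$ close to the null cone has $g(v,v)$ arbitrarily small while the Euclidean norm $\Vert v\Vert$ stays of order one, so $\Vert v\Vert^2/g(v,v)$ is unbounded over all spacelike vectors. Closeness of $g$ to $\eta$ does not help. What you actually need is that the \emph{particular} position vectors $P(x(a),x(b))$ arising from geodesics with endpoints on a common level set $(D^2_q)^{-1}(c)$ stay uniformly away from the null cone. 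The paper secures this by introducing an auxiliary flat metric $\eta^+$ with strictly wider cones than $g$, proving the pointwise inequality $v\cdot v\le \epsilon^{-1/2}\,g_q(v,v)$ for $\eta^+$-spacelike $v$, and then running an angle-chasing argument: $q_2-q_1$ is nearly tangent to the level set (by strong differentiability of $D^2_q$ at $r$), the level set is nearly orthogonal to $\dot\gamma(t_r)$, hence nearly horizontal in the chart, and the curvature bound on geodesics in $O$ keeps $P(x(a),x(b))$ within a fixed small angle of $q_2-q_1$. Only after establishing that $P(x(a),x(b))$ is $\eta^+$-spacelike can one bound $\Vert P(x(a),x(b))\Vert^2$ by $\epsilon^{-1/2}\,D(x(a),x(b))^2$ and absorb the $\epsilon^{1/2}$ loss by redefining $\epsilon$. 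Without this step \eqref{lor} does not follow from \eqref{pat}.

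A smaller issue: your argument for strict convexity of the sublevel sets $(D^2_q)^{-1}((-\infty,c))\cap O$ applies strong convexity of $D^2_q\circ x$ to an arbitrary geodesic with endpoints in the sublevel set, but the theorem only establishes strong convexity for geodesics whose endpoints lie on the \emph{same} level set. The paper closes this by a maximum-point argument: if $D^2_q\circ x$ attained a value $\ge c$, pick the maximum point $t_{\max}$, find nearby $t_1<t_{\max}<t_2$ with $D^2_q(x(t_1))=D^2_q(x(t_2))$, and apply the level-set inequality to the sub-segment to derive a contradiction.
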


\begin{remark}
Mimicking the proof of \cite[Prop.\ 3.1]{andersson98} it is easy to
show that the Lorentzian distance on $O$ from $q$,
$D^L_q=\sqrt{-D^2_q}$ is semiconvex and hence almost everywhere
first and twice differentiable. However, in order to prove the
geodesic convexity of $(D^2_q)^{-1}((-\infty,c))\cap O$ we need a
result on the convexity of $D^2_q$ rather than on the convexity of
$D^L_q$. Furthermore, we stress that the notion of semiconvexity, due to Rockafellar, is a rather weak notion. For
instance the concave function $-x^2$ on $\mathbb{R}$ is semiconvex
(thus the terminology {\em semiconvexity} can be misleading).
\end{remark}

We recall that a subset $A \subset M$ is {\em causally convex} in $B\subset M$,
with $A\subset B$, if every $C^1$ causal curve contained in $B$ and
joining two points of $A$ is necessarily contained in $A$. An open subset $B$ is {\em strongly causal}
if every point $p\in B$ admits arbitrarily small open neighborhoods  which are causally convex in $B$.
An open subset $S$ of $M$ is called {\em causally
simple} if it is strongly causal and $J^+_S\subset S\times S$ is closed in the product topology \cite{hawking73,minguzzi06c}. A causally simple
subset $S$ is {\em globally hyperbolic} if for every $p,q\in S$,
$J_S^{+}(p)\cap J^{-}_S(p)$ is compact.

The following claim was  known under stronger differentiability assumptions. It can be found in a footnote of
\cite{minguzzi06c}.
\begin{theorem} \label{bik}
Convex normal subsets  are causally simple.
\end{theorem}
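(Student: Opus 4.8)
The plan is to verify the two defining properties of causal simplicity for a convex normal set $C$: strong causality and closedness of $J^+_C$ in $C\times C$. For strong causality I would argue that the basic neighborhoods provided by Theorem \ref{nsx} — coordinate balls $B(p,\delta)$ that are reversible strictly convex normal — are causally convex in $C$ for $\delta$ small enough, so that $C$ is strongly causal. To see that a small convex normal $O=B(p,\delta)\subset C$ is causally convex in $C$, I would use Lemma \ref{gpl} to pick coordinates on a slightly larger convex normal set $C'$, $\bar O\subset C'\subset C$, in which a flat Lorentzian metric $g^+=-N(\dd x^0)^2+(\dd x^1)^2+\cdots+(\dd x^{n-1})^2$ satisfies $g<g^+$; then any $g$-causal curve (in particular any $C^1$ causal curve) in $C$ joining two points of $O$ is $g^+$-causal, hence $x^0$ is monotone along it, and the argument of the proof of Theorem \ref{bpo} shows the curve stays within an $x^0$-slab and, after shrinking $\delta$, within the coordinate ball $O$. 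Choosing such balls as a neighborhood base at each point of $C$ establishes strong causality of $C$.

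For the closedness of the causal relation, the key input is the Lipeomorphism property of the \emph{non-pointed} exponential map from Theorem \ref{jfd}: $\exp$ is a Lipeomorphism between a star-shaped neighborhood of the zero section of $TC$ and $C\times C$. Thus for $(x,y)\in C\times C$ there is a unique $v=v(x,y)\in T_xC$ with $\exp(v)=(x,\gamma_v(1))=(x,y)$, and $v$ depends continuously (indeed locally Lipschitz-ly) on $(x,y)$. By Theorem \ref{pwa} (the Lorentzian-Finsler part), for $x,y\in C$ one has $x\le_C y$ if and only if $y=x$ or the connecting geodesic $\gamma_v$ is future directed causal, which (since $g_v(v,v)$ is constant along a geodesic) is equivalent to $v\in J_x\setminus\{0\}$ being future directed, i.e. to $v(x,y)$ lying in the closed future causal cone $\bar J^+_x$ at $x$. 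Hence $J^+_C=\{(x,y)\in C\times C: v(x,y)\in \bar J^+_x\}\cup \Delta_C$, where $\Delta_C$ is the diagonal.

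It then remains to observe that this set is closed. The set of future-directed causal vectors is a closed subset of $TC$ (it is the closure of the open set $I^+$ of future timelike vectors together with $\{0\}$, cut out by the continuous condition $g_v(v,v)\le 0$ plus the continuous time-orientation condition from the global timelike field of the Finsler spacetime structure — here I use that each component of $J_x\setminus\{0\}$ is convex, as recalled before the definition of Finsler spacetime), and $(x,y)\mapsto v(x,y)$ is continuous on $C\times C$ by the Lipeomorphism property; so $J^+_C$ is the preimage of a closed set under a continuous map, union the closed diagonal, hence closed in $C\times C$. The main obstacle, and the point that genuinely uses the low-regularity machinery of the paper, is precisely the continuity of $(x,y)\mapsto v(x,y)$ together with the characterization of $\le_C$ via the connecting geodesic: both rest on Theorems \ref{jfd} and \ref{pwa}, which in turn require the Picard–Lindel\"of analysis and the Gauss-Lemma-type arguments developed for merely Lipschitz sprays; with those in hand, the closedness argument is the essentially formal step sketched above.
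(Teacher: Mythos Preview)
Your argument for the closedness of $J^+_C$ is correct and essentially coincides with the paper's: the paper defines $f_1(p,q)=g(\exp_p^{-1}q,\exp_p^{-1}q)$ and $f_2(p,q)=g(\exp_p^{-1}q,T(p))$ and writes $J^+_C$ as a sublevel set of these continuous functions, using precisely the continuity of $(p,q)\mapsto \exp_p^{-1}q$ that you invoke via Theorem~\ref{jfd} and the characterization of $\le_C$ via Theorem~\ref{pwa}.

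Your argument for strong causality, however, has a genuine gap. You want a small ball $O$ to be causally convex in $C$, and for this you work in a chart on a set $C'$ with $\bar O\subset C'\subset C$ where the flat comparison metric $g^+>g$ from Lemma~\ref{gpl} is available. But a $C^1$ causal curve in $C$ joining two points of $O$ is only assumed to lie in $C$, not in $C'$; the moment it leaves $C'$ the comparison with $g^+$ and the monotonicity of $x^0$ are unavailable, and nothing in your argument prevents this excursion. You cannot simply take $C'=C$, since Theorem~\ref{bik} applies to an \emph{arbitrary} convex normal set, which need not be covered by a single chart of the type produced by Lemma~\ref{gpl}. So the construction of causally convex neighborhoods, as written, is circular: it presupposes that the curve remains in a region where you already control causality.

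The paper avoids this entirely by using a different, intrinsic route to strong causality. Having established $\overline{J^+_C}=J^+_C$, it invokes the characterization (from \cite{minguzzi07b}) that $C$ is strongly causal iff $(p,q)\in J^+_C$ and $(q,p)\in\overline{J^+_C}$ imply $p=q$. Assuming $p\le_C q$ and $q\le_C p$ with $p\ne q$, one gets a future directed causal geodesic $\gamma_1$ from $p$ to $q$ and another $\gamma_2$ from $q$ to $p$; their images must differ (else some tangent vector would be simultaneously future and past directed), yielding two distinct geodesics in $C$ with the same endpoints, contradicting the uniqueness clause in the definition of convex normal set. This argument needs no local comparison metric and works for any convex normal $C$.
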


\begin{proof}
Let $T$ be the global timelike vector field which provides the time
orientation. Let $C$ be a convex normal subset, and let
$f_1,f_2\colon C\times C \to \mathbb{R}$ be the functions
$f_1(p,q):=g(\exp_p^{-1}q, \exp_p^{-1}q)$,
$f_2(p,q):=g(\exp_p^{-1}q, T(p))$ since $\exp^{-1}$ and $g$ are
continuous $f_1$ and $f_2$ are continuous, and hence
$J^{+}_C=f_1^{-1}((-\infty,0])\cap f_2^{-1}((-\infty,0))$ is closed.

A spacetime $C$ is strongly causal if and only if for every $p,q \in C$, $(p,q)\in J_C^+$ and $(q,p)\in \overline{J_C^+}$ imply $p=q$ (see \cite{minguzzi07b}).
Suppose that there are $p,q\in C$, $p\ne q$, such that $p\le_C q$
and $q\le_C p$ (we just proved $\overline{J_C^+}=J^+_C$). Let $\gamma_1$ be the future directed causal geodesic
connecting $p$ to $q$ and let $\gamma_2$ be the future directed
causal geodesic connecting $q$ to $p$. Then the images of $\gamma_1$
and $\gamma_2$ differ (otherwise there would be a geodesic which is
both future and past directed), and hence there are two
geodesics connecting $p$ to $q$, a contradiction to the uniqueness
of the connecting geodesic in convex normal sets (Theor.\ \ref{nsx}).
\end{proof}

\begin{corollary}\label{cop}
Let  $M$ be a $C^{2,1}$-manifold endowed with a $C^{1,1}$ Lorentzian
metric $g$ and corresponding locally Lipschitz Levi-Civita
connection. Let $p\in M$  then there is a local base $\{C_i\}$ for
the topology at $p$ such that for every $i$, $C_i$ is strictly
convex normal, globally hyperbolic, $\bar{C}_{i+1}\subset C_i$, and
$C_{i+1}$ is causally convex in $C_{i}$ (and hence $C_1$).
\end{corollary}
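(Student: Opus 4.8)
The plan is to build the local base by iterating Theorem \ref{nsx} together with Theorem \ref{bik}, while at each step shrinking enough to secure global hyperbolicity and causal convexity relative to the previous set. First I would fix any chart $\{x^\mu\}$ around $p$ with $g_{\alpha\beta}(p)=\eta_{\alpha\beta}$, $\Gamma^\mu_{\alpha\beta}(p)=0$, so that coordinate balls $B(p,\delta)$ are, for small $\delta$, strictly convex normal (Theorem \ref{nsx}) and, being convex normal, causally simple by Theorem \ref{bik}. Global hyperbolicity of these small balls is obtained as follows: for $q,r\in B(p,\delta)$ the sets $J^\pm_{B(p,\delta)}$ are closed (causal simplicity), and $J^+_{B}(q)\cap J^-_{B}(r)$ is a closed subset of the compact closure $\bar B(p,\delta)$; it suffices to check it does not touch the boundary, which one arranges using the flat comparison metrics $g^\pm$ of Lemma \ref{gpl} (squeezing the light cones of $g$ between two flat cones) so that any causal curve between interior points stays in a compact coordinate sub-box. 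Hence each sufficiently small coordinate ball is strictly convex normal and globally hyperbolic.

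The inductive construction then goes: having produced $C_i$ (a small strictly convex normal, globally hyperbolic coordinate ball around $p$), I choose $C_{i+1}=B(p,\delta_{i+1})$ with $\delta_{i+1}$ small enough that (1) $\bar C_{i+1}\subset C_i$, (2) $C_{i+1}$ is again strictly convex normal and globally hyperbolic by the above, and (3) $C_{i+1}$ is causally convex in $C_i$. For (3), the key point is again the flat squeezing: pick the chart so that on $\bar C_1$ one has $g^-<g<g^+$ with $g^\pm=-N^{\mp}(\dd x^0)^2+(\dd\vec x)^2$ flat Minkowski metrics. A $g$-causal curve in $C_i$ is $g^+$-causal, hence monotone in $x^0$ with $\Vert\vec x(t_2)-\vec x(t_1)\Vert\le N^+|x^0(t_2)-x^0(t_1)|$ (exactly as in the proof of Theorem \ref{bpo}); so a $g$-causal curve in $C_i$ joining two points of the ball $B(p,\delta_{i+1})$ cannot leave a slightly larger coordinate cylinder, and by taking $\delta_{i+1}$ small relative to $\delta_i$ one forces it to remain inside $B(p,\delta_{i+1})$ itself. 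This gives causal convexity of $C_{i+1}$ in $C_i$, and transitively (since causal curves in $C_1$ joining points of $C_{i+1}\subset C_i$ lie in $C_i$ by causal convexity of $C_i$ in $C_1$, then in $C_{i+1}$) causal convexity in $C_1$. Taking $\delta_i\to 0$ makes $\{C_i\}$ a local base at $p$.

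The main obstacle I expect is step (3), the causal convexity: it is not automatic from convex normality (a convex normal set need not be causally convex inside a larger one), and one genuinely needs the quantitative light-cone control. The clean way around it is precisely the flat-comparison argument above — using Lemma \ref{gpl} to trap the $g$-cones between two $v$-independent Minkowski cones converts the problem into the elementary Minkowski-space fact that causal curves are Lipschitz graphs over the time axis, which immediately bounds their spatial excursion. Global hyperbolicity is then a comparatively minor point: compactness of $J^+(q)\cap J^-(r)$ follows from closedness (causal simplicity, Theorem \ref{bik}) plus the containment in a compact coordinate box furnished by the same Lipschitz bound. Everything else — strict convex normality and the nesting $\bar C_{i+1}\subset C_i$ — is a direct application of Theorem \ref{nsx}.
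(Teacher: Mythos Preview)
Your approach has a genuine gap: round coordinate balls $B(p,\delta)$ are \emph{not} in general globally hyperbolic, nor is a smaller such ball causally convex in a larger one, and this fails already in exact Minkowski space. Take $B$ the open unit Euclidean ball in $(\mathbb{R}^{1+2},\eta)$ and $r=(-\tfrac12,\tfrac{7}{10},0)$, $q=(\tfrac12,\tfrac{7}{10},0)\in B$; for every small $\epsilon>0$ the point $(0,1-\epsilon,0)$ lies in $J^+_B(r)\cap J^-_B(q)$ (the straight timelike segments from $r$ and to $q$ remain in $B$), so this diamond accumulates on $\partial B$ and is not compact. Scaling the same configuration by $\delta_{i+1}$ produces a piecewise timelike curve in $B(p,\delta_i)$ joining two points of $B(p,\delta_{i+1})$ that exits $B(p,\delta_{i+1})$ (through $(0,1.15\,\delta_{i+1},0)$, say). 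Your Lipschitz estimate $\Vert\vec x(t_2)-\vec x(t_1)\Vert\le N^+\vert x^0(t_2)-x^0(t_1)\vert$ only confines the curve to a cylinder of spatial radius about $(1+2N^+)\delta_{i+1}>\delta_{i+1}$; shrinking $\delta_{i+1}$ relative to $\delta_i$ does not help because the obstruction scales homogeneously with $\delta_{i+1}$.

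The paper's proof takes a different route precisely to avoid this. Via Theorem~\ref{jse} and Lemma~\ref{kji} it builds the $C_i$ as sets $S(c_1',c_2')=(D^2_{q_1})^{-1}((-\infty,c_1'))\cap(D^2_{q_2})^{-1}((-\infty,c_2'))\cap O$ for a pair $q_1\ll p\ll q_2$ on a timelike geodesic through $p$, with the constants shrinking to $D^2_{q_1}(p),D^2_{q_2}(p)$. Geodesic convexity of these sets is the content of Theorem~\ref{jse}, while global hyperbolicity and causal convexity in one another follow immediately from the monotonicity of $D^2_{q_1}$ and $D^2_{q_2}$ along future-directed causal curves (Theorem~\ref{pwa}): no causal curve can leave and re-enter a sublevel set of either function. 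That monotonicity, not a flat-comparison bound, is the mechanism that makes the construction work.
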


Part of the previous result was already known
\cite{minguzzi06c}. What was open was the result on the convexity of
$C_i$. Observe that if $C_i$ is globally hyperbolic then any two
causally related events are connected by a causal geodesic contained
in $C_i$ (by the Avez-Seifert theorem \cite{hawking73}). Thus what
was really missing was the convexity with respect to spacelike
geodesics. These globally hyperbolic convex normal sets behave rather well, indeed this property is left invariant under finite intersections.  As a consequence, by Lebesgue's covering lemma, for any compact  set $K$ on spacetime, for any metric $\rho: M\times M \to \mathbb{R}$ inducing the manifold topology, and finite covering of $K$ by globally hyperbolic convex normal sets, there is an $\epsilon>0$  such that for any pair of points $p,q$ such that   $\rho(p,q)<\epsilon$, points $p$ and $q$ are contained in one element of the covering and hence connected by some geodesic.

\begin{remark}
A spacetime is strongly causal if every point admits an arbitrarily
small causally convex set (in $M$). In a strongly causal spacetime,
we can find a causally convex (in $M$) open neighborhood $Y$ of $p$
contained in $C_1$, and for sufficiently large $i$, $C_i\subset Y$.
As a consequence, the sets $C_i$ for sufficiently large $i$ are also
causally convex in $M$. Thus for strongly causal spacetimes we can
include in the previous Corollary the causal convexity of $C_i$ with
respect to $M$ among the properties of these sets.
\end{remark}

%
%\begin{theorem}
%Under the assumptions of Theorem \ref{pwa}, let $C$ be a convex
%neighborhood of $p$. Suppose that $g$ isLorentzian
%\begin{itemize}
%\item Let $d_p(q)=0$ if $p$ is not connected to $q\in C$ by a future
%directed causal geodesic, and $d_p(q):=[-F(q)]^{1/2}$ otherwise (the
%function $d_p\colon C\to [0,+\infty)$ is the Lorentzian distance
%from $p$). The function $d_p$ is strictly convex and for
%sufficiently small $\delta>0$,  the set $\{q: d_p(q)\ge
%\delta\}\subset C$ is strictly convex.
%\end{itemize}
%
%\end{theorem}
%

%A version of Theorem \ref{oii} for Lorentzian manifolds could be
%developed following the

\subsection{Two variations on the main theme}
Our results admit a number of variations obtained  replacing the
base point $p$ of the pointed exponential map with an embedded
manifold. For instance, consider a pseudo-Riemannian manifold
endowed with a $C^{1,1}$ metric and a Lipschitz  metric compatible
connection.
 Let $\phi:S\to M$, $k\ge 1$, be a
$C^{1,1}$ $k$-dimensional embedding, such that the induced metric is
pseudo-Riemannian. Its non-degeneracy implies that at each point
$p\in \phi(S)$ the tangent space $T_pM$ is the direct sum of the
tangent space to $\phi(S)$ and it normal space. Let $\nu(S)$ be the
corresponding $n$-dimensional normal bundle with base $\phi(S)$.

%\footnote{Start with a
%(Lipschitz) base on $\phi(S)$ obtained pushing forward a base on
%$S$, and apply the Gram-Schmidt procedure to a base on $M$ to get a
%Lipschitz normal base.}

We shall prove the following theorem analogous to
 to Theorem \ref{jfd}:

\begin{theorem} \label{jfe} $\empty$
%Let  $M$ be a $C^{2,1}$-manifold endowed with a Lipschitz
%connection.
\begin{itemize}
\item[($\exp_{\nu(S)}$)]
%Let $\phi:S\to M$, $k\ge 1$ be a $C^{1,1}$ $k$-dimensional
%immersion, and let $\nu(S)$ be the corresponding Lipschitz normal
%bundle.
The vector bundle $\nu(S)$ is Lipschitz. Moreover, the map $\exp_{\nu(S)}$ is strongly differentiable on the image of
the zero section of $\pi\vert _{\nu(S)}: \nu(S)\to \phi(S)$, and establishes a
Lipeomorphism between a neighborhood of the image of the zero section and a
neighborhood of $\phi(S)$.
\end{itemize}
\end{theorem}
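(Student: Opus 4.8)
The plan is to reduce Theorem \ref{jfe} to the already-established Theorem \ref{jfd} (the pointed case), or rather to a "collar" version of the Picard--Lindel\"of analysis carried out there. The key observation is that the exponential map of the normal bundle, $\exp_{\nu(S)}\colon \nu(S)\to M$, $(p,w)\mapsto \gamma_w(1)$ with $w\in \nu_p(S)$, is nothing but the restriction of the full geodesic flow to the Lipschitz submanifold $N:=\nu(S)\subset TM$; thus the regularity statement ``$\varphi$ restricted to a Lipschitz $m$-dimensional submanifold $N$ of $TM$'' in Theorem \ref{flo} applies verbatim, giving the local Lipschitz character of $\exp_{\nu(S)}$ and the almost-everywhere differentiability with the mixed-derivative identity. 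So the work is concentrated in two places: (i) showing $\nu(S)$ is a Lipschitz vector bundle, and (ii) upgrading the almost-everywhere behaviour at the zero section to genuine \emph{strong} differentiability there, from which Leach's inversion theorem delivers the Lipeomorphism.

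For step (i), I would argue as follows. Since $g$ is $C^{1,1}$ and $\phi\colon S\to M$ is a $C^{1,1}$ embedding with non-degenerate induced metric, at each $p\in\phi(S)$ the orthogonal complement $T_p\phi(S)^{\perp}$ with respect to $g_p$ is well defined, and it varies in a $C^{0,1}$ (Lipschitz) manner because both $g$ and the tangent distribution $T\phi(S)$ (spanned by $C^{0,1}$ vector fields $\partial_i\phi$, since $\phi\in C^{1,1}$) are Lipschitz, and orthogonal projection onto a Lipschitzly varying non-degenerate subspace is a Lipschitz operation on the relevant (relatively compact) sets. Choosing locally a Lipschitz frame for $\nu(S)$ by applying a Gram--Schmidt-type procedure adapted to the indefinite inner product (legitimate by non-degeneracy) gives Lipschitz local trivializations with Lipschitz transition functions, i.e.\ $\nu(S)$ is a Lipschitz vector bundle.

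For step (ii), I would mimic the proof of Theorem \ref{jfd}($\exp_p$)/Theorem \ref{jfe}($\exp_{\nu(S)}$) sketched earlier in the paper for the pointed case: run the Picard--Lindel\"of iteration for the geodesic equations \eqref{fhs}--\eqref{fha} in a local chart adapted to $\phi(S)$, with initial data $(x_0,w)$ where $x_0$ ranges over a Lipschitz coordinate patch of $\phi(S)$ and $w$ over $\nu_{x_0}(S)$. As in Section 2.3, one shows that as the base point and initial velocity tend to a point of the zero section the solution and its first variation converge to the linear ``position vector'' map, with the error estimate \eqref{nis} holding uniformly; concretely, writing the solution as $x(1)= x_0 + w + o(\|(x_0-\bar x_0, w)\|)$ and differencing two nearby data pairs near the zero section yields the strong-differential inequality with strong differential equal to the obvious block-linear map (identity along $\phi(S)$, inclusion along $\nu$). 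Because this differential is invertible, Leach's inversion theorem applies and gives that $\exp_{\nu(S)}$ is a Lipeomorphism between an open neighborhood of the zero-section image and an open neighborhood of $\phi(S)$, with the inverse strongly differentiable there; a standard partition-of-unity/compact-exhaustion argument along $\phi(S)$ then globalizes the local statement to a full tubular neighborhood.

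The main obstacle I anticipate is the uniformity in step (ii): the Picard--Lindel\"of estimates must be shown to hold \emph{locally uniformly as the base point varies over the Lipschitz submanifold $\phi(S)$}, not merely for a fixed base point, and the submanifold itself is only Lipschitz, so one cannot straighten it by a $C^1$ change of coordinates. The remedy is the one already exploited in Theorem \ref{flo}: keep the ambient $C^{2,1}$ chart fixed, regard $\nu(S)$ as a Lipschitz submanifold $N\subset TM$ parametrized by Lipschitz coordinates, and track all constants through the iteration in terms of the ambient Lipschitz bounds on $H^\mu$ and on the parametrization of $N$; since the zero section of $\nu(S)$ sits inside the zero section of $TM$, where $H^\mu$ and its relevant combinations vanish and are Lipschitz, the requisite uniform estimates survive. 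Once this uniform strong differentiability on the zero section is in hand, everything else is an application of the tools already assembled.
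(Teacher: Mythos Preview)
Your proposal is correct in outline, and step (i) matches the paper almost exactly (Gram--Schmidt on a Lipschitz frame to get Lipschitz trivializations of $\nu(S)$). The difference lies in step (ii). You propose to re-run the Picard--Lindel\"of iteration with the base point varying over the Lipschitz submanifold $\phi(S)$, and you correctly identify the resulting uniformity issue as the main obstacle. The paper sidesteps this entirely: since Theorem~\ref{jfd} already gives strong differentiability of the \emph{full} exponential map $\exp\colon TM\to M\times M$ on the zero section of $TM$, one only needs to show that the Lipschitz chart map $(s,y)\mapsto (x_0(s),\dot x_0(s,y))=(x_0(s),\,y^j e_j(s))$ from $\nu(S)$ into $TM$ is strongly differentiable at $y=0$. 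This is an elementary direct verification from the Lipschitzness of the frame $e_j$ (the naive Jacobian has an ill-defined block $y^j\partial_i e_j^\mu$, but at $y=0$ that block vanishes and the strong-differential estimate holds). One then composes with $\exp$ and with the projection $\pi_2$ using the chain rule for strong differentials, reads off the invertible Jacobian $(\partial_k x_0^\mu\,,\,e_j^\mu)$, and invokes Leach. So your approach works but is heavier; the paper's factorization through the ambient $\exp$ buys you exactly the uniformity you were worried about, for free. The globalization along $\phi(S)$ is then done by a locally finite covering and nested-neighborhood argument, in the same spirit as your compact-exhaustion suggestion.
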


By the Lipschitzness of $\nu(S)$ we can apply to
$\exp_{\nu(S)}:=\exp\vert_{\nu(S)}$ the results of Theorem
\ref{flo}. This theorem can also be used to construct tubular
neighborhoods. The pointed exponential map can be regarded as a
special case of this type of construction for $k=0$.

We can also consider a function $H^\mu$ dependent on time provided
it is Lipschitz. Let $\gamma_{(t_0,v)}(t)$ be any solution of
\begin{align}
\frac{\dd x^{\mu}}{ \dd t}&=v^\mu,  \\
\frac{\dd v^\mu}{\dd t}&=H^\mu(t,x,v)  ,
\end{align}
with initial condition $v\in TM$ at time $t_0$.
\begin{theorem}
The solution $\gamma_{(t_0,v)}(t)$ is locally Lipschitz in
$(t,t_0,v)$ and $\gamma_{(t_0,v)}(\cdot)$ is $C^{2,1}_{loc}$. Let
$(\hat{t},\hat{p})\in \mathbb{R}\times M$, then for every
$\epsilon>0$ there is an open neighborhood $C$ of $\hat{p}$ such
that for every $p_1,p_2\in C$, $t_1,t_2\in
(t_0-\epsilon,t_0+\epsilon)$, $t_1<t_2$, there is one and only one
solution starting from $p_1$ at time $t_1$ and reaching $p_2$ at
time $t_2$  entirely contained in $C$.
\end{theorem}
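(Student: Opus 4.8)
The plan is to reduce the time-dependent statement to the autonomous case already handled by Theorems~\ref{flo}, \ref{jfd}, and~\ref{nsx} by means of the standard trick of adjoining the time variable as an extra coordinate. First I would set $\bar M:=\mathbb{R}\times M$ with the coordinate $x^{-1}:=t$ adjoined to the chart $\{x^\mu\}$ on $M$, so that $\bar M$ is a $C^{2,1}$ manifold of dimension $n+1$. On $T\bar M$ with fiber coordinates $(v^{-1},v^\mu)$ I would define the spray $\bar H$ by $\bar H^{-1}(x,v):=0$ and $\bar H^\mu(x,v):=(v^{-1})^2\,H^\mu(x^{-1}/v^{-1}\cdot\text{(rescaled)},\ldots)$---more precisely, since $H^\mu(t,x,v)$ need only be Lipschitz and need not be homogeneous of degree two in the original $v$'s, I would instead verify directly that the reparametrization-invariant content is captured by the \emph{non-autonomous} flow and apply the cited ODE theory in the enlarged space: the first-order system on $T\bar M$ is $\dot x^{-1}=v^{-1}$, $\dot v^{-1}=0$, $\dot x^\mu=v^\mu$, $\dot v^\mu=(v^{-1})^{2}\,H^\mu(x^{-1},x,v/v^{-1})$ restricted to the slice $v^{-1}=1$, which is invariant and on which the system reproduces the given equations. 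The right-hand side is locally Lipschitz on the region $v^{-1}>0$ because $H^\mu$ is locally Lipschitz in all its arguments; this is the step that requires a little care, since one must check that composition with the smooth map $(x^{-1},x,v,v^{-1})\mapsto(x^{-1},x,v/v^{-1})$ preserves the local Lipschitz property on compact subsets of the region $v^{-1}>0$, which it does.

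With this reformulation, the first assertion---that $\gamma_{(t_0,v)}(t)$ is locally Lipschitz in $(t,t_0,v)$ and that $t\mapsto\gamma_{(t_0,v)}(t)$ is $C^{2,1}_{loc}$---follows from the classical Picard--Lindel\"of theory with Lipschitz dependence on initial conditions and parameters (the references cited before Theorem~\ref{flo}), applied to the autonomous system on $T\bar M$: the flow is locally Lipschitz jointly in time and initial data, $t_0$ enters merely as the value of the extra coordinate $x^{-1}$ at the initial instant, and $C^{2,1}_{loc}$ regularity of $t\mapsto\gamma_{(t_0,v)}(t)$ is immediate because $\dot x^\mu=v^\mu$ is $C^{0,1}$ in $t$ and $\dot v^\mu=H^\mu(t,x,v)$ is $C^{0,1}$ in $t$ along the (already Lipschitz) solution, so $x^\mu$ is $C^{1,1}$ in the second derivative sense and hence $C^{2,1}$. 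Alternatively one invokes Theorem~\ref{flo} directly for $\bar H$ to obtain the same conclusions.

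For the convex-neighborhood statement, I would apply Theorem~\ref{nsx} to the spray $\bar H$ on $\bar M$ at the point $\hat p:=(\hat t,\hat p)$: there is a reversible strictly convex normal neighborhood $\bar C$ of $(\hat t,\hat p)$ in $\bar M$, and shrinking it we may take $\bar C=B((\hat t,\hat p),\delta)$ in the enlarged chart. Any two points of $\bar C$ are joined by a unique $\bar H$-geodesic lying in $\bar C$; projecting, such a geodesic has $v^{-1}\equiv\text{const}>0$ (provided $\delta$ is small enough that all the geodesics stay in the region $v^{-1}>0$, which holds by continuity since $v^{-1}=1$ on the diagonal), so after the affine reparametrization bringing $v^{-1}$ to $1$ it is exactly a solution of the given non-autonomous system. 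Taking $C$ to be the $M$-slice of $\bar C$ through the relevant time and $\epsilon$ so small that $(t_0-\epsilon,t_0+\epsilon)\times C\subset\bar C$ (here $t_0$ is understood to range near $\hat t$), the existence and uniqueness of the connecting solution from $p_1$ at time $t_1$ to $p_2$ at time $t_2$, contained in $C$, is precisely the existence and uniqueness of the connecting $\bar H$-geodesic between $(t_1,p_1)$ and $(t_2,p_2)$ in $\bar C$; the requirement $t_1<t_2$ reflects the sign of $v^{-1}$ along that geodesic.

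The main obstacle is the verification that the enlarged spray $\bar H$ is genuinely a locally Lipschitz spray---i.e.\ that $\bar H^\mu(x,v)$ is positively homogeneous of degree two in $(v^{-1},v^\mu)$ and locally Lipschitz including across the relevant part of the zero section. Homogeneity is built into the formula $(v^{-1})^2 H^\mu(x^{-1},x,v/v^{-1})$, but Lipschitzness near the locus where $v^{-1}\to 0$ is problematic if $H^\mu$ is merely Lipschitz rather than quadratic in $v$; I would circumvent this by \emph{not} demanding that $\bar H$ extend to a spray on all of $T\bar M$, but only working on the open invariant cone $\{v^{-1}>0\}$, where all the cited Picard--Lindel\"of and convexity arguments (which are local and never actually use the zero section once one is on an open set bounded away from it) go through verbatim; since the diagonal of $\bar M\times\bar M$ corresponds to $v^{-1}=1$, a neighborhood of it lies safely inside this cone, and that is all that Theorems~\ref{flo} and~\ref{nsx} need for the conclusion.
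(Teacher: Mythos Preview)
Your overall strategy---adjoining time as an extra coordinate to obtain an autonomous system on $\bar M=\mathbb{R}\times M$ and then invoking Theorem~\ref{nsx}---is exactly the paper's approach. The first part of your argument (Lipschitz dependence and $C^{2,1}_{loc}$ regularity) is fine and matches the paper.

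However, your ``main obstacle'' and its proposed workaround contain a genuine error. The paper (consistently with the spray context throughout) takes $H^\mu(t,x,v)$ to be positively homogeneous of degree two in $v$; this is precisely what makes the identity $(t')^2\,H^\mu(t,x,x'/t')=H^\mu(t,x,x')$ hold. With that identity the enlarged spray on $T\bar M$ has components $\bar H^{-1}=0$ and $\bar H^\mu(t,x,t',x')=H^\mu(t,x,x')$, which is manifestly locally Lipschitz \emph{everywhere}, including at the zero section. Theorem~\ref{nsx} then applies directly on $(\mathbb{R}^{n+1},\max(\Vert\,\Vert,\vert\,\vert))$ with no further work.

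Your attempt to avoid the zero section by restricting to the cone $\{v^{-1}>0\}$ does not succeed: the entire machinery of Sections~2.3--2.4 (strong differentiability of $\exp$, Leach's inverse function theorem, the construction of convex normal balls) is carried out \emph{at} the zero section. Under $\exp^{-1}$ the diagonal of $\bar M\times\bar M$ corresponds to the zero section of $T\bar M$ (where in particular $v^{-1}=0$), not to the slice $v^{-1}=1$; you have confused the invariant slice on which the flow reproduces the original ODE with the locus relevant to the inverse-function argument. So the claim that ``a neighborhood of the diagonal lies safely inside this cone'' is false, and without the homogeneity simplification your argument would not close. Once you use homogeneity as the paper does, the obstacle disappears and your proof coincides with the paper's.
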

The idea is to rewrite the system of first order ODE as follows
\begin{align*}
\frac{\dd t}{\dd s}&=t', \\
\frac{\dd x^{\mu}}{ \dd s}&={x'}^\mu,  \\
\frac{\dd t'}{\dd s}&=0, \\
\frac{\dd {x'}^\mu}{\dd s}&=(t')^2\,H^\mu(t,x,x'/t')=H^\mu(t,x,x') ,
\end{align*}
where $x'= v t'$, so as to reduce the problem to the ($s$-)time
independent case. The previous theorem is then a corollary of
Theorem \ref{nsx} whenever in the proof provided for that theorem in
place of the Banach space $(\mathbb{R}^n, \Vert\, \Vert)$ we
consider the Banach space $(\mathbb{R}^{n+1}, \max(\Vert\, \Vert,
\vert \, \vert))$.

%
%The just given generalization covers the case of extremals of
%functionals of the form
%\[
%\int_{s_0}^{s_1} L(s,x,x') \dd s ,
%\]
%where $F$ is a time dependent pseudo-Finsler metric ($F$ is positive
%homogeneous of first degree in the velocities and the Hessian of
%$L$ with respect to the velocities is non-singular)
%\cite{beem76b,matsumoto86,bao00}. Let $F$ be independent of $s$. The
%notions of exponential map, normal, and convex normal neighborhoods
%admit natural generalizations to Finsler geometry.  For instance, an
%open set $C$ is convex normal if for every $p\in C$ the exponential
%map establishes a Lipeomorphism bewtween a star-shaped open set
%$\Omega_p \subset T_pM$ and $C$.
%
%In particular, by Whitehead's theorem \cite{whitehead33} convex
%normal neighborhoods exists in smooth Finsler spaces. Theorem
%\ref{nsx} admits the following generalization according to which
%convex normal neighborhoods exist already in manifolds endowed with
%$C^{1,1}$ Finsler metrics.
%
%
%\begin{theorem}
%Every pseudo-Finsler space $(M,F)$ in which $F$ is $C^{1,1}$
%admits arbitrarily small convex normal neighborhoods. %In the positive definite case in which every geodesic segments ma.
%\end{theorem}

\subsection{Some technical preliminary results} \label{jtt}

%Let us recall the following equivalence

%\begin{theorem}
%If a Lipschitz function $f: O\to \mathbb{R}$, $O\subset
%\mathbb{R}^n$, is G\^ateaux differentiable at $p\in O$ then it is
%also differentiable at $p$.
%\end{theorem}

Let us first recall that in finite dimensions and for Lipschitz
functions the notions of G\^ateaux differential and Frechet
differential coincide \cite[p.\ 158]{dieudonne69}.

\begin{proposition} \label{gat}
Any Lipschitz function $f\colon O\to \mathbb{R}$, $O\subset
\mathbb{R}^n$, which is G\^ateaux differentiable at $p$ is
differentiable at $p$.
\end{proposition}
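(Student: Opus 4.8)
The statement is essentially the classical fact that, for Lipschitz functions on finite-dimensional spaces, G\^ateaux and Fr\'echet differentiability coincide; the role of the Lipschitz hypothesis is to convert the directionwise (hence pointwise in the unit sphere) differentiability into a uniform estimate over directions, using compactness of the sphere $S^{n-1}$. Let $L\colon \mathbb{R}^n\to\mathbb{R}$ be the G\^ateaux differential of $f$ at $p$, i.e.\ $L(v)=\lim_{t\to 0^+} [f(p+tv)-f(p)]/t$ for each $v$, a linear functional by hypothesis. Since $f$ is $K$-Lipschitz in a ball $B(p,r)$, taking $t\to 0$ in $|f(p+tv)-f(p)|\le K t\Vert v\Vert$ gives $|L(v)|\le K\Vert v\Vert$, so $\Vert L\Vert\le K$.

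Introduce the remainder $g(h):=f(p+h)-f(p)-L(h)$, defined on $B(0,r)$. It satisfies $g(0)=0$, it is $2K$-Lipschitz (as a sum of a $K$-Lipschitz map and a linear map of norm $\le K$), and by construction its G\^ateaux differential at $0$ vanishes: for every unit vector $v$, $g(tv)/t\to 0$ as $t\to 0^+$. The goal is to show $g(h)/\Vert h\Vert\to 0$ as $h\to 0$, which is precisely Fr\'echet differentiability of $f$ at $p$ with differential $L$.

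Fix $\epsilon>0$. For each $v\in S^{n-1}$ pick $\delta_v\in(0,r)$ with $|g(tv)|\le \epsilon\, t$ for $0<t\le\delta_v$. If $w\in S^{n-1}$ with $\Vert w-v\Vert\le \epsilon/(2K)$, the Lipschitz bound gives, for $0<t\le\delta_v$,
\[
|g(tw)|\le |g(tv)|+2K\,t\Vert w-v\Vert\le \epsilon\, t+\epsilon\, t=2\epsilon\, t.
\]
By compactness, cover $S^{n-1}$ by finitely many balls $B(v_i,\epsilon/(2K))$, $i=1,\dots,N$, and set $\delta:=\min_i\delta_{v_i}>0$. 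For any $h$ with $0<\Vert h\Vert\le\delta$ write $h=\Vert h\Vert\, w$ with $w\in S^{n-1}$; then $w\in B(v_i,\epsilon/(2K))$ for some $i$, whence $|g(h)|=|g(\Vert h\Vert\, w)|\le 2\epsilon\,\Vert h\Vert$. Thus $|g(h)|/\Vert h\Vert\le 2\epsilon$ for all $0<\Vert h\Vert\le\delta$, and since $\epsilon$ was arbitrary, $g(h)/\Vert h\Vert\to 0$. Therefore $f$ is (Fr\'echet) differentiable at $p$, with differential $L$.

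\textbf{Main obstacle.} The only delicate point is the passage from ``for each direction there is a threshold $\delta_v$'' to ``a single threshold $\delta$ works for all directions''; this is exactly where both the compactness of $S^{n-1}$ (false in infinite dimensions) and the uniform Lipschitz control on $g$ are indispensable, the latter being what lets one transfer the smallness estimate from $v$ to nearby $w$ with an error linear in $t$. Everything else is bookkeeping.
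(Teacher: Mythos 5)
Your proof is correct, and it rests on exactly the same two ingredients as the paper's: compactness of $S^{n-1}$ and the use of the Lipschitz constant to transfer the directional (G\^ateaux) estimate from a ray to nearby rays with an error linear in the radial parameter. The only difference is packaging — you run a direct $\epsilon$-net covering argument, while the paper argues by contradiction, extracting a convergent subsequence of directions $e_n\to e$ and decomposing the increment into components parallel and perpendicular to the limit direction — so the two proofs are essentially the same.
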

\begin{proof}
Let $G$ be the G\^ateaux differential at $p$. By contradiction,
suppose that  $G$ is not the differential of  $f$  at $p$, then
there is $\epsilon>0$ and a sequence $v_n\in \mathbb{R}^n$, $v_n \ne
0$, $v_n\to 0$, such that
\begin{equation} \label{ydf}
\Vert f(p+v_n)-f(p)-G(v_n)\Vert> \epsilon \Vert v_n\Vert.
\end{equation}
Let $e_n=v_n/\Vert v_n\Vert$, by the compactness of $S^{n-1}$ we can
assume without loss of generality that $e_n \to e$, with $\Vert
e\Vert=1$. Let us decompose $v_n$ in  components parallel and
perpendicular to $e$
\[
v_n=a_n e+b_n
\]
where $a_n$ is a scalar and $b_n$ is a vector. We have
\begin{align*}
\Vert f(p+v_n)-f(p)-G(v_n)\Vert&\le \Vert f(p+v_n)-f(p+a_n e)\Vert
\\&
\quad +\Vert f(p+a_n e)-f(p)-G(a_n e)\Vert
\\&
\quad +\Vert
G(a_n e)-G(v_n)\Vert\\
&= K \Vert b_n\Vert+o_{e}(a_n)+\vert G\vert \, \Vert b_n\Vert ,
\end{align*}
where $K$ is the Lipschitz constant. The condition $e_n\to e$ reads
$a_n/\Vert v_n\Vert \to 1$, $\Vert b_n\Vert / \Vert v_n\Vert\to 0$.
For sufficiently large $n$ the previous inequality contradicts Eq.\
(\ref{ydf}).

\end{proof}

In order to prove Theorem \ref{flo} we shall need the following
result on differentiation under the integral sign which, we believe,
is interesting in its own right.

\begin{theorem} \label{hua}
Let $f\colon [a,b]\times \mathbb{R}^k\to \mathbb{R}$, $(x,y) \mapsto
f(x,y)$, be a continuous function which is locally Lipschitz in $y$
uniformly in $x$. Then for almost every $y$, the differential $d_2
f$ exists at $(u,{y})$  for almost every $u\in [a,b]$, it is
summable in $[a,b]$, and for every $x\in [a,b]$:
\[
(\dd_2 \int_a^x f(u,y)\, \dd u)=  \int_a^x \dd_2 f(u,{y})\,
\dd u.
\]
\end{theorem}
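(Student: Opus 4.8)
The plan is to obtain the formula by a Fubini argument on the set where $\dd_2 f$ exists, followed by a single dominated-convergence estimate. First I would localize: since $[a,b]$ is compact and $f$ is locally Lipschitz in $y$ uniformly in $x$, for each ball $B=B(0,N)\subset\mathbb{R}^k$ there is a constant $K=K_N$ with $\vert f(x,y)-f(x,y')\vert\le K\vert y-y'\vert$ for all $x\in[a,b]$ and $y,y'\in B$. It suffices to prove the statement with ``for a.e.\ $y$'' replaced by ``for a.e.\ $y\in B$'' and then take the union over $N\in\mathbb{N}$; so from now on I work on $[a,b]\times B$ with a fixed uniform Lipschitz constant $K$.

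The second, and main, step is a measurability/Rademacher argument. For each fixed increment $h\ne 0$ the map $(x,y)\mapsto(f(x,y+h)-f(x,y))/\vert h\vert$ is continuous, hence Borel, and moreover it is continuous in $h$ for $h\ne0$. Taking limits along rational increments (which is legitimate precisely because of this continuity in $h$), one checks that the set $P\subset[a,b]\times B$ on which all partials $\p f/\p y^i$ exist is Borel, that the candidate Jacobian $J(x,y):=(\p f/\p y^1,\dots,\p f/\p y^k)(x,y)$ is Borel on $P$, and that the set $D\subset P$ on which $f(x,\cdot)$ is Fréchet differentiable at $y$ with differential $J(x,y)$ — obtained by testing the remainder condition along rational $h$ — is Borel as well; on $D$ one has $\dd_2 f(x,y)=J(x,y)$ with $\vert\dd_2 f(x,y)\vert\le K$. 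Now for each fixed $x$, Rademacher's theorem applied to the Lipschitz map $y\mapsto f(x,y)$ shows that the $x$-slice of the complement of $D$ is Lebesgue-null (at a differentiability point all partials exist and equal the components of $J$, so the point lies in $D$). By Fubini, $([a,b]\times B)\setminus D$ is $(k{+}1)$-dimensionally null; by Fubini once more, for a.e.\ $y\in B$ the slice $\{u\in[a,b]:(u,y)\notin D\}$ is null. For such a ``good'' $y$ the map $u\mapsto\dd_2 f(u,y)$ is defined a.e., measurable, and bounded by $K$ on $[a,b]$, hence summable; set $\Lambda(x):=\int_a^x\dd_2 f(u,y)\,\dd u$.

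It remains, for a fixed good $y$, to show that $F(x,\cdot):=\int_a^x f(u,\cdot)\,\dd u$ is differentiable at $y$ with differential $\Lambda(x)$, for every $x\in[a,b]$. Writing $R(u,h):=f(u,y+h)-f(u,y)-\dd_2 f(u,y)(h)$ we have
\[
F(x,y+h)-F(x,y)-\Lambda(x)(h)=\int_a^x R(u,h)\,\dd u ,
\]
so it suffices to prove $\int_a^b\vert R(u,h)\vert\,\dd u=o(\vert h\vert)$ as $h\to0$. For a.e.\ $u$ (those with $(u,y)\in D$) we have $R(u,h)/\vert h\vert\to0$, and for all small $h$ we have $\vert R(u,h)\vert\le 2K\vert h\vert$. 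Hence along any sequence $h_n\to0$ the functions $\vert R(\cdot,h_n)\vert/\vert h_n\vert$ tend to $0$ a.e.\ and are dominated by $2K$, so dominated convergence gives $\int_a^b\vert R(u,h_n)\vert/\vert h_n\vert\,\dd u\to0$; since the sequence was arbitrary, $\int_a^b\vert R(u,h)\vert\,\dd u=o(\vert h\vert)$. This yields $\dd_2 F(x,y)=\Lambda(x)$, which is the theorem.

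The step I expect to be the main obstacle is the bookkeeping in the second paragraph: making sure that the set $D$ on which the \emph{total} differential $\dd_2 f$ exists (not merely the sets where individual partials exist) is genuinely Borel, so that Fubini applies to it. This is exactly where the Lipschitz hypothesis does the work — it forces Fréchet differentiability from Gâteaux differentiability (cf.\ Proposition \ref{gat}) and lets one detect differentiability by testing the remainder on a countable dense set of increments, which is what makes $D$ Borel. The two applications of Fubini and the dominated-convergence estimate are then routine.
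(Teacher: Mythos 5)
Your proof is correct. It shares the paper's skeleton --- localize to obtain a uniform Lipschitz constant $K$, combine Rademacher's theorem with Fubini to produce the full-measure set of good $y$, and finish with dominated convergence --- but the final step is genuinely different. The paper applies dominated convergence only to the one-dimensional difference quotients $\frac{1}{\epsilon_n}[f(x,y+v\epsilon_n)-f(x,y)]$, which establishes that $F(x,\cdot):=\int_a^x f(u,\cdot)\,\dd u$ is G\^ateaux differentiable at $y$ with differential $v\mapsto\int_a^x\p_v f(u,y)\,\dd u$, and then invokes Proposition \ref{gat} (a Lipschitz function that is G\^ateaux differentiable is Fr\'echet differentiable) to upgrade. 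You instead run dominated convergence on the full Fr\'echet remainder $\vert R(\cdot,h_n)\vert/\vert h_n\vert$, which tends to zero a.e.\ and is dominated by $2K$, obtaining Fr\'echet differentiability of $F(x,\cdot)$ at $y$ in one stroke, so Proposition \ref{gat} is not needed. What your route costs is the explicit verification that the set $D$ on which the total differential $\dd_2 f$ exists is Borel, so that Fubini may legitimately be applied to it; you handle this via rational increments, and in fact you are more careful on this point than the paper, which applies Fubini to the characteristic function of the analogous set $A$ without commenting on its measurability. Both arguments are sound; yours is marginally more self-contained, while the paper's reuses a lemma it needs elsewhere anyway.
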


\begin{proof}
Let $\bar{y}\in \mathbb{R}^n$, there is a relatively
compact neighborhood $O\ni \bar{y}$ which is a product of open
intervals.
The function $f(x,\cdot)\vert_O$ is $K$-Lipschitz in $y$ where $K$ does not depend
on $x$. In particular, for any $x\in [a,b]$ the function
$f(x,\cdot)$ is  differentiable almost everywhere (Rademacher's
theorem). Let $E(x)\subset O$ be the subset where $f(x,\cdot)\vert_O$ is
differentiable.  Fubini's theorem applied to the characteristic function
of
\[
A=\cup_{x\in [a,b]} [\{x\}\times E(x)]\subset [a,b]\times
O,
\]
states that for almost every $y \in O$, the differential $d_2 f$
exists at $(x,{y})$  for almost every $x\in [a,b]$. From now on let
$y$ be one of these special values.

As a consequence, for almost every $y \in O$, and for every
vector $v\in \mathbb{R}^k$, the partial derivative $\p_v f(x,y)$ exists for
almost every $x\in [a,b]$. Because of the Lipschitz condition we
have $\vert \p_v f(x,y)\vert\le K\Vert v\Vert $. Let $\epsilon_n\to 0$ then
\[
\p_v f(x,y)=\lim_{n\to \infty} f_n^y(x) ,
\]
where
\[
f_n^y(x)=\frac{1}{\epsilon_n} [f(x,y+v \epsilon_n)-f(x,y)],\qquad
\vert f^y_n(x) \vert \le K\Vert v\Vert  .
\]
By the dominated convergence theorem $\p_v f(x,y)$ is
summable and
%(see e.g.\ \cite[Theor.\ 10.39]{apostol74} for similar
%calculations)
\[
\p_v \int_a^x f(u,y) \dd u= \int_a^x \p_v f(u,y) \dd u.
\]
This equation proves that the linear  operator $G(v)$ on the
right-hand side is the G\^ateaux differential at $y$ of
$F(x,v):=\int_a^x f(u,v) \dd u$. Moreover, this function is
Lipschitz in $v$ thus $F(x,\cdot)$ is differentiable and  $G$
coincides with its differential (Prop.\ \ref{gat}).
\end{proof}

%Unfortunately, this equation proves only that the G\^ateaux
%derivative is linear in $v$ and given by our candidate expression
%for the Fr\'echet differential (continuity in $(x,y)$ would imply
%the equivalence between G\^ateaux and Fr\'echet differentials
%\cite[Prop.\ 3.2.15]{drabek07}, but we shall follow a different
%strategy which takes advantage of the just given proof that $\p_v
%f(x,y)$ is summable).
%
%Observe that if the thesis is false then there is $\epsilon>0$ and a
%sequence $\{v_n\}$, $v_n\ne 0$, $v_n\to 0$, such that for every $n$
%and $x\in [a,b]$
%\[
%\Vert \int_a^x f(u,y+v_n)\, \dd u- \int_a^x f(u,y)\, \dd u-  \int_a^x \dd_2 f(u,{y})(v_n)\,
%\dd u\Vert>\epsilon \Vert v_n\Vert.
%\]
%However, consider the functions
%\begin{align*}
%h_n^y(x)&=\frac{1}{\Vert v_n\Vert} [f(x,y+v_n)-f(x,y)-d_2 f(x,y)(v_n)]\quad for \ x \ such \ that \ d_2f(x,y) \ exists,\\
%h_n^{y}&=0 \quad elsewhere \ on \ [a,b]
%\end{align*}
%then $h_n^y(x)\to 0$ everywhere on $[a,b]$; each  $h_n^y(x)$ is
%summable in $[a,b]$
%
%[DA COMPLETARE]

Alternatively, we could use a theorem \cite{minguzzi13b} which
improves Schwarz's theorem on the equality of mixed partial
derivatives. The reader is referred to \cite{minguzzi13b} for
further details. A result similar to the next one was proved by  Federer \cite[Lemma 4.7]{federer59}.

% This theorem, built on previous results by Tolstov
%\cite{tolstov49b}, has been obtained by the author in
%cite{minguzzi13b}.
%
%\begin{theorem} \label{jun}
%Let $f\colon [a,b]\times [c,d] \to \mathbb{R}$,  be such that
%$f(x,\cdot):[c,d]\to \mathbb{R}$ and $f(\cdot,y): [a,b]\to
%\mathbb{R}$ are absolutely continuous for every $x\in [a,b]$ and
%$y\in [c,d]$, respectively. The following properties are equivalent:
%\begin{itemize}
%\item[(i)] There is  $e_1\subset [a,b]$, $\vert e_1\vert=b-a$, such that for $x\in e_1$, $\p_1
%f(x,\cdot)$ exists for every $y$, and moreover it is Lipschitz over
%$[c,d]$ uniformly for $x\in e_1$.
%\item[(ii)]  There is  $e_2\subset [c,d]$, $\vert e_2\vert=d-c$, such that for
%$y\in e_2$, $\p_2 f(\cdot, y)$ exists for every
%$x$,  and moreover it is  Lipschitz over $[a,b]$ uniformly for $y\in
%e_2$.
%\end{itemize}
%Suppose they hold true then there is a product subset $E\subset
%O\cap (e_1\times e_2)$, $\vert E\vert=(b-a)(d-c)$, such that on $E$
%the function $f$ is differentiable, $\p_2 \p_1 f(x,y)$, $\p_1 \p_2
%f(x,y)$ exist and are bounded  and
%\[\p_2 \p_1 f=\p_1 \p_2 f.\]
%\end{theorem}
%
%
%\begin{remark}
%As it is well known, in a distributional sense the equality of mixed
%derivatives is always true. Nevertheless, the distributional
%approach demands to prove that both  $\p_2\p_1 f$ and $\p_1\p_2 f$
%are regular distributions, namely representable as the integral of
%the test function with a function in $L^1([a,b]\times [c,d])$,while
%in some applications, like our own, one is able, to start with, to
%place conditions on just one of them.
%\end{remark}

\begin{theorem} \label{pok}
Let $f\colon O\to \mathbb{R}$ be a Lipschitz function defined on an
open subset of $\mathbb{R}^n$. Let $J\colon O\to \mathbb{R}^n$ be a
continuous function. If the differential of $f$ exists and coincides
with $J$ almost everywhere then $f$ is $C^{1}$ and its differential
is $J$.
\end{theorem}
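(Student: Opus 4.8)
The plan is to reduce the statement to a single integral representation formula and then extract differentiability from it. Concretely, I would first prove that for every segment $[p,q]\subset O$ one has
\[
f(q)-f(p)=\int_0^1 J\bigl(p+t(q-p)\bigr)\cdot(q-p)\,\dd t .
\]
Granting this, fix $p\in O$ and a ball $B(p,r)\subset O$. For $w$ with $\Vert w\Vert<r$ write
\[
f(p+w)-f(p)-J(p)\cdot w=\int_0^1\bigl[J(p+tw)-J(p)\bigr]\cdot w\,\dd t,
\]
whose modulus is bounded by $\Vert w\Vert\,\sup_{t\in[0,1]}\Vert J(p+tw)-J(p)\Vert = o(\Vert w\Vert)$ as $w\to 0$, by continuity of $J$ at $p$. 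Hence $f$ is (Fréchet) differentiable at $p$ with $\dd f(p)=J(p)$; since $p$ was arbitrary and $J$ is continuous, $f\in C^1(O)$ with $\dd f=J$.

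The real work is the integral formula. By Rademacher's theorem $f$ is differentiable almost everywhere, and by hypothesis $\dd f=J$ on a full-measure set $G\subset O$. Fix a direction $v$ and a segment $[p,q]$ with $q-p=v$; by Fubini applied to the null set $O\setminus G$, for almost every translate $p'$ (transverse to $v$) the point $p'+tv$ lies in $G$ for almost every $t$, so along that translated segment $f$ is absolutely continuous (it is Lipschitz) with derivative $t\mapsto J(p'+tv)\cdot v$ almost everywhere, and the fundamental theorem of calculus gives the formula for almost every such translate. To pass to \emph{every} segment contained in a convex subset of $O$, I approximate $[p,q]$ by a sequence of ``good'' parallel segments $[p_k,q_k]\to[p,q]$: the right-hand sides converge because $J$ is continuous (uniform continuity on a compact neighbourhood), and the left-hand sides converge because $f$ is continuous. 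This yields the formula on all segments in any convex subset of $O$, which is all we need since differentiability is a local property. (Alternatively one can run the standard mollification argument: with $f_\epsilon=f*\rho_\epsilon$ one has $\nabla f_\epsilon=J*\rho_\epsilon\to J$ locally uniformly while $f_\epsilon\to f$ locally uniformly, and a uniform limit of smooth functions with uniformly convergent gradients is $C^1$ with the limit gradient; this relies on the fact that the distributional gradient of a Lipschitz function coincides with its a.e.\ pointwise gradient, hence with $J$.)

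I expect the approximation step — upgrading the line–integral identity from almost every segment in a given direction to every segment — to be the only genuinely delicate point, because it must be done uniformly enough that the continuity of $J$ and of $f$ close the estimates; everything else is a routine application of Rademacher's theorem, Fubini, the ACL characterization of Lipschitz functions, and the elementary $o(\Vert w\Vert)$ bound above. This is essentially the content of the lemma of Federer cited before the statement, adapted to the present setting.
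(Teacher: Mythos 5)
Your proof is correct and follows essentially the same route as the paper's: both arguments rest on Rademacher plus Fubini to obtain the fundamental-theorem-of-calculus identity along almost every line in a fixed direction, and then use the continuity of $f$ and of $J$ to upgrade that identity to \emph{every} line (the paper phrases this as $f=F$ with $F(t,y)=g(y)+\int_0^t J$, you phrase it as the segment formula $f(q)-f(p)=\int_0^1 J(p+t(q-p))\cdot(q-p)\,\dd t$, but these are the same step). The one genuine, if minor, difference is at the end: because your integral formula gives an error bound $\vert f(p+w)-f(p)-J(p)\cdot w\vert\le \Vert w\Vert\,\sup_t\Vert J(p+tw)-J(p)\Vert$ that is uniform over all directions $w$ at once, you obtain Fr\'echet differentiability directly and do not need the paper's Proposition \ref{gat} (Lipschitz $+$ G\^ateaux differentiable $\Rightarrow$ differentiable), whereas the paper only establishes each directional derivative $\p_v f=J(v)$ separately and must invoke that proposition to conclude. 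Your parenthetical mollification argument is also a valid alternative, provided one justifies that the distributional gradient of a Lipschitz function equals its a.e.\ pointwise gradient.
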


\begin{proof}
For $n=1$ this statement follows from inspection of the function
\[
f(x)-f(a)-\int^x_a J(u) \dd u,
\]
$a\in O$. This function is
Lipschitz, thus absolutely continuous, and hence has a derivative which vanishes almost everywhere,
thus it is a constant. Taking $x=a$ we  find that this constant is
zero, thus $f(x)=f(a)+\int^x_a J(u) \dd u$ which implies that $f$ is
$C^1$ with derivative $J$.

We can assume without loss of generality,  $O=\mathbb{R}^n$.
%We proceed by induction on $n$.
Let $v\in \mathbb{R}^n$, we want to show that at every $x\in O$,
$\p_v f=J(v)$. This fact would imply that $f$ is G\^ateaux
differentiable at each point with G\^ateaux differential $J$, thus
the desired conclusion would follow from Prop.\ \ref{gat}.

By Fubini's and Rademacher's theorem for almost every hyperplane perpendicular to
$v$, the function $f$ is almost everywhere differentiable at the
points belonging to the hyperplane.

%In particular, its restriction to the hyperplane is almost
%everywhere differentiable and hence, by the induction hypothesis,
%$C^{1,1}$.

Let us introduce coordinates $\{y^0,\cdots, y^n\}$ such that one
such hyperplane has equation $y^0=0$ and $v=\p_{y^0}$. Let
$g(y_1,\cdot, y_n):=f(0,y_1,\cdots,y_n)$ be the Lipschitz
restriction of $f$ to the hyperplane.

The function $J(t,y)$ is continuous and hence uniformly continuous
over compact subsets. As a consequence,  the
function
\[
F(t,y)=g(y)+\int_0^t J(t,y;e_0) \, \dd t
\]
is continuous in $(t,y)$ and $C^{1}$ in $t$.

%
%Let us consider the first order ODE dependent on exterior parameters
%$y=(y_1,\cdot, y_n)$
%\[
%\frac{\dd }{\dd t} F(t,y)=J(t,y;e_0), \qquad F(0,y)=g(y),
%\]
%where $e_0=\p_{y^0}$. The Picard-Lindel\"of theorem establishes the
%existence of a unique  solution on an open set of the hyperplane
%$y_0=0$. As $J$ The solutions must actually extend all over
%$\mathbb{R}^n$ thanks to \cite[Theor\ 3.1]{hartman64}. Furthermore,
%by \cite[Prop. 1.10.1]{cartan71} it is Lipschitz in $(t,g(y))$
%(actually $C^{1,1}$ in $t$) and hence Lipschitz in $(t,y)$.

%The solutions is actually Lipschitz in $(t,y)$ and  $C^1$ in $t$.
%Since $J$ is locally Lipschitz in $t$, it is almost linear over
%compact subsets, thus the solution can be found complete and hence
%defined for every $(t,y)\in\mathbb{R}^n$.

Let $(t,y)\in \mathbb{R}^n$ and let $U\ni (t,y)$ be an open
neighborhood.  The function $f$ is almost everywhere differentiable
with differential $J$. By Fubini's theorem for almost every line
parallel to the $y^0$-axis ( the line is determined by its
intersection with $y^0=0$ and  ``a.e.\ $\!\!\!$'' in this statement is meant
in the Lebesgue ($n-1$)-dimensional measure of this hyperplane), $f$
is almost everywhere differentiable with differential $J$. Thus
there is some $(t',y')\in U$ passing through one such line. The
function $f-F$ over such a line is Lipschitz and differentiable almost
everywhere with zero derivative, thus it is a constant. But $f-F=0$
at $y^0=0$ thus the constant vanishes and hence
$f(t',y')=F(t',y')$. As both functions are continuous and $U$ is
arbitrary, $f(t,y)=F(t,y)$ that is $f=F$. We conclude that $\p_{v}f=
\p_v F=\p_{y^0} F= J(e_0)$, which is what we wanted to prove.
\end{proof}

\section{Proofs I: A Picard-Lindel\"of analysis}

This section is devoted to the proofs of the results stated in the
previous section and, in particular, to the proof of Theorem
\ref{nsx}. In order to prove the strong differentiability of $\exp$
we shall make a Picard-Lindel\"of analysis of the geodesic equation
in the small. We shall also pass through the proof of existence and Lipschitz dependence on initial conditions. Though
they are already known they  will be useful to fix  the notation and
introduce the bounds used in the last step of the proof.

Let $x^\mu\colon U\to \mathbb{R}^n$ be a coordinate chart in a neighborhood $U$
of $p\in M$. Without loss of generality let us assume that
$x^\mu(p)=0$ and let $r$ be such that the closed ball
$\bar{B}(p,r)=\{q: \Vert q-p\Vert\le r\}$ is contained in $U$, where
$\Vert \, \Vert$ is the Euclidean coordinate norm. On the tangent
bundle we introduce the local coordinate system $\{x^\mu,\dot
x^\mu\}$.  We shall regard the coordinate chart image $x^\mu(U)$  as an open subset of
the normed space $(\mathbb{R}^n, \Vert \,\Vert)$.

Let us consider the system (\ref{fhs})-(\ref{fha}) where $H^\mu$ is
homogeneous of second degree in $v$ and Lipschitz
\[
\Vert H(x_2,v_2)-H(x_1,v_1)\Vert\le \alpha \Vert x_2-x_1\Vert+\beta
\Vert v_2-v_1\Vert
\]
in the domain $\bar{B}(p,r)\times \{v: \Vert v\Vert\le 1\}$.

Suppose that $v_1,v_2$ are not bounded as stated. Let $V$ be any
constant such that
\begin{equation} \label{uhu}
V>\max(\Vert v_1\Vert, \Vert v_2\Vert).
\end{equation}
The Lipschitz conditions is then better rewritten in the following
form:
\begin{align*}
\Vert H(x_2,v_2)-H(x_1,v_1)\Vert&=V^2 \Vert
H(x_2,\frac{v_2}{V})-H(x_1,\frac{v_1}{V})\Vert\\
&\le V^2\{\alpha \Vert x_2-x_1\Vert+\beta \Vert
\frac{v_2}{V}-\frac{v_1}{V}\Vert \}\\
&\le \alpha V^2\Vert x_2-x_1\Vert +\beta V\Vert v_2-v_1\Vert.
\end{align*}
Let
\begin{equation} \label{emm}
M=\sup_{x\in \bar{B}(p,r)}  \sup_{ \Vert v\Vert=1} \Vert H(x,
v)\Vert.
\end{equation}
%We shall need the following estimate which makes use of the
%homogeneity of $H$.
%
%In the next expression let $V=\max(\Vert v_1\Vert, \Vert v_2\Vert)$
%%the identity $H(x,v)=H(x,e) \Vert v\Vert^2$ makes sense
%\begin{align*}
%\Vert H(x_2,v_2)-H(x_1,v_1)\Vert&= \Vert H(x_2,e_2) \Vert
%v_2\Vert^2-H(x_1, e_1)\Vert v_1\Vert^2\Vert \\
%&\!\!\!\!\!\!\!\!\!\!\!\!\!\!\!\!\!\!\!\!\!\!\!\!\!\le \Vert
%H(x_2,e_2)-H(x_1,e_2)\Vert\, \Vert v_2\Vert^2+\Vert
%H(x_1,e_2)-H(x_1,e_1)\Vert\, \Vert v_2\Vert^2\\&+\Vert
%H(x_1,e_1)\Vert\, \, \left\vert \,\Vert v_2\Vert^2-\Vert v_1
%\Vert^2\, \right\vert\\
%&\le K V^2\Vert x_2-x_1\Vert+2M V^2+2VM \left\vert \,\Vert
%v_2\Vert-\Vert v_1 \Vert \, \right\vert\\
%&\le K V^2\Vert x_2-x_1\Vert+2M V^2+2VM \Vert v_2-v_1\Vert,
%\end{align*}
%where in the last step we used the triangle inequality. It can be
%checked that these calculations make sense even if for some $i$,
%$v_i=0$, although $e_i$ is not well defined; just set
%$H(x_i,e_i)=0$.

We rewrite  (\ref{fhs})-(\ref{fha}) in integral form
\begin{align}
x^\mu(t, x_0,\dot{x}_0)&=x^\mu_0+\int_0^t \dot{x}^\mu(t,x_0,\dot{x}_0)\, \dd t , \label{g1}\\
\dot{x}^\mu(t, x_0,\dot{x}_0)&= \dot{x}^\mu_0+\int_0^t H^\mu(x(t,
x_0,\dot{x}_0), \dot{x}(t, x_0,\dot{x}_0)) \, \dd t , \label{g2}
\end{align}
where $(x_0,\dot{x}_0)$ is the initial condition at time $t=0$. We
have included in the above expression the dependence of
$(x,\dot{x})$ on the initial conditions. Unless  needed we shall
remove it from the expressions below.

%From here we shall avoid the componentwise notation in favor of a
%vector notation.
Let $(x_0,\dot{x}_0)$
%\[
%(x^\mu_0,\dot{x}^\mu_0)=(x^\mu(0),\dot{x}^\mu(0))
%\]
belong to the domain
\begin{align}
\max\{ \Vert x_0\Vert,\Vert \dot x_0\Vert\} &< \delta, \label{d1}
% \Vert \dot x^\mu_0\Vert&\le \epsilon,   \label{d2}
\end{align}
where  $\delta$ is a positive constant such that (the expression
makes sense for $M=0$)
\begin{equation} \label{jux}
  \delta < \frac{1}{M}(1-e^{-M r/2})\le \frac{r}{2} ,
\end{equation}
and is sufficiently small that
\begin{equation} \label{juz}
\frac{\delta}{1-\delta M}\le 1,\qquad \frac{\beta \delta}{2(1-\delta\, M)} \, \big(1+\sqrt{1+4 \alpha/\beta^2}\, \big)\le 1.
\end{equation}

Starting from $k=-1$ with
\[
x_{-1}(t)=x_0, \quad \dot{x}_{-1}(t)=0,
\]
we define inductively the next two functions defined over $[0,1]$ which, in the induction
hypothesis are  well defined  and $C^1$  and are such that  $(x_k,\dot x_k)(t)\in \pi^{-1}(\bar{B}(p,r))$ for every $t\in [0,1]$
\begin{align}
x^\mu_{k+1}(t)&=x^\mu_0+\int_0^t \dot{x}_k^\mu(t)\, \dd t , \label{p1}\\
\dot{x}^\mu_{k+1}(t)&= \dot x^\mu_0+\int_0^t
H^\mu(x_k(t),\dot{x}_k(t)) \, \dd t . \label{p2}
\end{align}
In particular observe that they imply for $k=0$
\[
x_{0}(t)=x_0, \quad \dot{x}_{0}(t)=\dot{x}_0,
\]
while for $k\ge 1$ they will generically depend on $t$. From Eqs.\ (\ref{p1})-(\ref{p2}) we obtain  (we shall repeatedly use the inequality $\Vert \int v^\mu(t)
\dd t\Vert\le \int \Vert v^\mu(t)\Vert \dd t$ which is well known
once interpreted as the fact that the length of a $C^1$ curve on
$\mathbb{R}^n$  is greater than the distance between its endpoints)
\begin{align*}
\Vert x_{k+1}(t)\Vert &\le \Vert x_0\Vert +\int_0^t \Vert \dot{x}_k(t)\Vert\, \dd t ,\\
\Vert \dot{x}_{k+1}(t)\Vert &\le  \Vert \dot x_0\Vert +\int_0^t M
\,\Vert \dot{x}_k(t)\Vert^2\, \dd t .
\end{align*}
The second inequality implies inductively the following bound
\[
\Vert \dot{x}_{k+1}(t)\Vert \le \frac{\Vert \dot x_0\Vert}{1-\Vert
\dot x_0\Vert M t} ,
\]
which is clearly satisfied for $k=-1$ and which, replaced into the first inequality gives
\[
\Vert x_{k+1}(t)\Vert\le \Vert x_0\Vert -\frac{1}{M}\ln (1-\Vert
\dot x_0\Vert M t )\le \delta- \frac{1}{M}\ln (1-\delta \, M ).
\]
By (\ref{jux}) we have for every $k\ge 0$,
\begin{align*}
 \Vert x_{k}(t)\Vert& < r,  \\
\Vert \dot x_{k}(t)\Vert& < \frac{\delta}{1-\delta\, M}
\end{align*}
thus these functions define indeed points belonging to  $\pi^{-1}(\bar{B}(p,r))$.
We observe that for any instants
$t_1,t_2\in [0,1]$ and any pair $v_1:=\dot{x}_i(t_1), v_2:=\dot{x}_j(t_2)$
\begin{equation} \label{pki}
V:=\frac{\delta}{1-\delta\, M}
\end{equation}
satisfies the condition of Eq.\ (\ref{uhu}). From now on $V$ will be
given by this equation.

\subsection{Existence of geodesics} \label{oyt}

We have:
\begin{align*}
\Vert x_{k+1}(t)- x_{k}(t)\Vert &\le \int_0^t \Vert \dot{x}_k(t)-\dot{x}_{k-1}(t) \Vert \, \dd t ,\\
\Vert \dot{x}_{k+1}(t)-\dot{x}_{k}(t) \Vert & \! \le \!\! \int_0^t
\!\! \Vert H(x_k(t),\dot{x}_k(t)) -H(x_{k-1}(t),\dot{x}_{k-1}(t))
\Vert
\, \dd t , \\
&\le \int_0^t\{A \Vert x_k(t)-x_{k-1}(t)\Vert + B\Vert
\dot{x}_{k}(t)-\dot{x}_{k-1}(t)\Vert \}\, \dd t ,
\end{align*}
where $A=[\frac{\delta}{1-\delta\, M}]^2 \alpha$ and $B= \frac{\beta
\delta }{1-\delta\, M}$. There is a positive constant $D$ such that
\begin{equation} \label{kiu}
\frac{A}{D}+B=D,
\end{equation}
 namely
\begin{equation} \label{fun}
D=\frac{1}{2}\,\big( B+\sqrt{B^2+4 A}\,\big)=\frac{\beta \delta}{2(1-\delta\,
M)} \, \big( 1+\sqrt{1+4 \alpha/\beta^2}\, \big).
\end{equation}
By Eq.\ (\ref{juz})
\[
D\le 1.
\]
By induction we have the bounds
\begin{align*}
\Vert {x}_k(t)-{x}_{k-1}(t) \Vert &\le  \frac{1}{D}\frac{(Dt)^k}{k!} , \\
\Vert \dot{x}_k(t)-\dot{x}_{k-1}(t) \Vert &\le \frac{(Dt)^k}{k!} .
\end{align*}
They imply that the series
\begin{align*}
x^\mu_k(t)&= x^\mu_0+ \sum_{k=0}^n [x^\mu_{k+1}(t)-x^\mu_k(t)], \\
\dot{x}^\mu_k(t)&=\dot x^\mu_0+\sum_{k=0}^n [\dot
x^\mu_{k+1}(t)-\dot x^\mu_k(t)],
\end{align*}
define a succession of continuous functions which converge uniformly
to (continuous)  functions $x^\mu(t)$ and $\dot x^\mu(t)$ over
$[0,1]$. By uniform convergence we can pass to the limit in Eqs.\
(\ref{p1})-(\ref{p2}). Indeed, observe that $H^\mu(x,\dot{x})$ is
continuous over the compact set $\bar{B}(p,r)\times \{v: \Vert
v\Vert \le V\}$ and hence uniformly continuous. Thus these limits
indeed solve the  system (\ref{g1})-(\ref{g2}). In particular, Eq.\
(\ref{g1}) proves that indeed $\dot x^\mu(t)=\frac{\dd }{\dd t}\,
x^\mu(t)$.

For the proof of the uniqueness of geodesics the reader can consult \cite[Theor.\ 1.1, Chap.\ 2]{hartman64}.

%
%\subsection{Uniqueness of geodesics}
%
%Suppose that there are other continuous functions $(z, \dot z)(t)$
%over $[0,1]$ which satisfy (\ref{g1})-(\ref{g2}) and the initial
%conditions $(x_0,\dot{x}_0)$. By continuity for some time interval
%$[0,a)\subset [0,1]$ this curve is contained in $B(x_0,r)\times
%\{v:\Vert v\Vert \le V\}$, and it suffices to prove that the
%coincidence with $(x, \dot x)(t)$ holds as long as it remains in
%this domain since the latter solution belongs to this domain for the
%whole interval $[0,1]$.
%
%
%Subtracting from (\ref{p1})-(\ref{p2}) the equations
%(\ref{g1})-(\ref{g2}) for the pair  $(z, \dot z)(t)$,
%\begin{align*}
%\Vert x_{k+1}(t)- z(t)\Vert &\le \int_0^t \Vert \dot{x}_k(t)-\dot{z}(t) \Vert \, \dd t ,\\
%\Vert \dot{x}_{k+1}(t)-\dot{z}(t) \Vert &\le  \int_0^t \Vert
%H(x_k(t), \dot{x}_k(t)) - H(z(t),\dot{z}(t))  \Vert
%\, \dd t , \\
%&\le \int_0^t\{A \Vert x_k(t)-z(t)\Vert + B\Vert
%\dot{x}_{k}(t)-\dot{z}(t)\Vert \}\, \dd t ,
%\end{align*}
%where $A$ and $B$ are as before.
%
%% is as above and
%%\[
%%B'=\Gamma Z+\frac{\delta \Gamma}{1-\delta \Gamma}
%%\]
%%Defining $D'$ in a way analogous to the above, we have
%
%Thus inductively we obtain the analogous bound
%\[
%\Vert x_{k}(t)- z(t)\Vert\le \frac{1}{D} \frac{(Dt)^k}{k!}
%\]
%Finally, taking the limit $k\to \infty$ we get $x^\mu(t)=z^\mu(t)$.
%
%%By the continuity of $z(t)$ this equation proves that $z^\mu(t)$
%%remains in $\bar{B}(p,r)$ for the whole interval $[0,1]$ as so does
%%$x^\mu(t)$. This observation finishes the proof.

\subsection{Lipschitz dependence on initial conditions {\normalsize(Theorem \ref{flo})}} \label{dep}

The Lipschitz dependence on the initial condition can be proved
using the Gronwall's inequality.  It is rather easy to obtain it
directly using the Picard-Lindel\"of approximation.

Let us consider two solutions $x^\mu(t)$ and $y^\mu(t)$ of the
geodesic equation with initial conditions $(x^\mu_0,\dot x^\mu_0)$,
$(y^\mu_0,\dot y^\mu_0)$,  which belong to the domain given by Eq.\
(\ref{d1}). Starting from these initial conditions we define
inductively functions $x_k^\mu(t)$, $y_k^\mu(t)$ as above, and
subtract the corresponding Eqs.\ (\ref{p1})-(\ref{p2}).
\begin{align*}
\Vert x_{k+1}(t)- y_{k+1}(t)\Vert &\le \Vert x_0-y_0\Vert+\int_0^t
\Vert
\dot{x}_k(t)-\dot{y}_{k}(t) \Vert \, \dd t ,\\
\Vert \dot{x}_{k+1}(t)-\dot{y}_{k+1}(t) \Vert &\le \Vert \dot
x_0-\dot y_0\Vert+\!\!\int_0^t\!\{A \Vert x_k(t)-y_{k}(t)\Vert +
B\Vert \dot{x}_{k}(t)-\dot{y}_{k}(t)\Vert \}\, \dd t .
\end{align*}
We now regard the chart-trivialized tangent bundle of coordinates
$\{x^\mu, \dot x^\mu\}$ as the direct sum of vector spaces
$\mathbb{R}^n\oplus \mathbb{R}^n$ endowed with the norm
\[
\Vert (x, \dot x) \Vert:= \max \{\Vert x \Vert, \Vert \dot x \Vert
\}.
\]
By induction we obtain
\begin{align*}
\Vert x_{k}(t)- y_{k}(t)\Vert &\le  \max \{ D \Vert
x_0-y_0\Vert,\Vert \dot x_0-\dot y_0\Vert\}\,
\frac{e^{Dt}}{D}  , \\
\Vert \dot{x}_{k}(t)-\dot{y}_{k}(t) \Vert & \le \max \{D\Vert
x_0-y_0\Vert,\Vert \dot x_0-\dot y_0\Vert\} \, e^{Dt}.
\end{align*}
Clearly, the induction hypothesis is  satisfied for $k=0$. Taking the limit $k\to \infty$
\begin{align*}
\Vert x(t)- y(t)\Vert &\le  \max \{ D \Vert x_0-y_0\Vert, \Vert \dot
x_0-\dot y_0\Vert\}\,
\frac{e^{Dt}}{D}  , \\
\Vert \dot{x}(t)-\dot{y}(t) \Vert & \le \max \{D\Vert x_0-y_0\Vert,
\Vert \dot x_0-\dot y_0\Vert\} \, e^{Dt}.
\end{align*}
Recalling that $D \le 1$ these inequalities imply
\[
 \Vert (  x, \dot{x})(t) - (y,\dot{y})(t)\Vert  \le \Vert
(x_0,\dot x_0) -(y_0, \dot y_0)\Vert \, \frac{e^{Dt} }{D} ,
\]
which proves the Lipschitz dependence on the initial
conditions (the exponential map is obtained for $t=1$).

The joint dependence on $t$ and $(x_0,\dot{x}_0)$ is also locally
Lipschitz, indeed since the time dependence of $( x, \dot{x})(t)$ is
$C^{1,1}$ it is locally Lipschitz, thus for $t,t'\in [a,b]$
\[
\Vert (  x, \dot{x})(t') - (y,\dot{y})(t)\Vert \le  \Vert (  x,
\dot{x})(t')- (  x, \dot{x})(t)\Vert + \Vert (  x, \dot{x})(t) -
(y,\dot{y})(t)\Vert ,
\]
from which we infer the local Lipschitzness  of the dependence on
$(t,x_0,\dot{x}_0)$.

As a consequence, the geodesic flow over $TM$ is locally Lipschitz
because  every geodesic segment can be covered with a finite number
of coordinate patches. By continuity the domain $W$ where it is
defined is open and satisfies the conditions of Theorem \ref{flo}.
Analogously, $\Omega$ is open.

 Let us rewrite the system
(\ref{g1})-(\ref{g2}) reintroducing the dependence on the initial
conditions
\begin{equation} \label{bhq}
(x,\dot x)(t,x_0,\dot x_0)=(x_0,\dot x_0)+\int_0^t (f_1, f_2)
(t,x_0,\dot x_0) \dd t ,
\end{equation}
where
\begin{align*}
f_1^\mu(t,x_0,\dot x_0)&=\dot x^\mu(t,x_0,\dot x_0), \\
f_2^\mu(t,x_0,\dot x_0)&=H^\mu(x(t,x_0,\dot x_0), \dot{x}(t,x_0,\dot
x_0)).
\end{align*}
Since the dependence $(x,\dot x)(t,x_0,\dot x_0)$ is Lipschitz we
can apply Theorem \ref{hua} with the replacements $x\to t$, $y\to
(x_0,\dot x_0)$.

We conclude that for almost every $(x_0,\dot{x}_0)$
the differential $\dd_{(x_0,\dot{x}_0)} (f_1, f_2) (u)$ exists for almost every $u$, it is summable and for every $t\in [0,1]$
%We conclude that for almost every $(x_0,\dot{x}_0)$
%the differential of $(x,\dot x)$ in the variables $(x_0,\dot{x}_0)$
%exists for almost  every $t\in [0,1]$, it is summable in $[0,1]$ and
\begin{equation} \label{dow}
\dd_{(x_0,\dot{x}_0)}(x,\dot x)=\dd_{(x_0,\dot{x}_0)}(x_0,\dot
x_0)+\int_0^t \dd_{(x_0,\dot{x}_0)} (f_1, f_2)(u,x_0,\dot x_0) \dd
u.
\end{equation}
In particular, for almost every $(x_0,\dot{x}_0)$ satisfying Eq.\
(\ref{jux}) the differential $\dd_{(x_0,\dot{x}_0)}(x,\dot x)$
exists for every $t\in [0,1]$ and is given by this equation. Let us
call $U$ this special subset of the initial conditions.

%Let us  call  $\tilde{\Omega}$ the set of initial condition for
%which this equation is valid (again, in case the geodesic intersects
%several coordinate patches we have a finite number of equations of
%this type).

Since the functions $f_1$ and $f_2$ are locally Lipschitz the
differential on the right-hand side is bounded and so, for every
$(x_0,\dot{x}_0)\in U$, the quantity $\dd_{(x_0,\dot{x}_0)}(x,\dot
x)$ has a Lipschitz dependence on $t$ where the Lipschitz constant
does not vary in a small relatively compact neighborhood of
$(x_0,\dot{x}_0)$. In other words, $\dd_{(x_0,\dot{x}_0)}(x,\dot x)$
is a function dependent on $(t,x_0,\dot{x}_0)$ which is Lipschitz in
$t$ uniformly in those $(x_0,\dot{x}_0)$ belonging to $U$.

Differentiating Eq.\ (\ref{dow}) with  $(x_0,\dot{x}_0)\in U$ we get
that for almost every $t$
\[
\frac{\dd }{\dd t}\, \dd_{(x_0,\dot{x}_0)}(x,\dot
x)=\dd_{(x_0,\dot{x}_0)} (f_1, f_2)= \dd_{(x_0,\dot{x}_0)} \frac{\dd
}{\dd t}\, (x,\dot x).
\]
Though we performed just a local analysis, the conclusion does not
change in the setting of Theorem \ref{flo} where $U$ is replaced by
$\tilde{\Omega}$ since, as observed above,
  every geodesic segment can be covered with a finite number
of coordinate patches. The fact that $U$ and $\tilde\Omega$ are
star-shaped is a consequence of Eq.\ (\ref{onu}).

In order to prove the last statement of Theorem \ref{flo} it
suffices to  observe that we can restart the last argument beginning
with Eq.\ (\ref{bhq}) by introducing the Lipschitz dependence
$(x_0,\dot{x}_0)(z)$ where $z$ are local coordinates on $N$. Since
$(x,\dot{x})(t,z)$ is locally Lipschitz the whole argument stills
works where it is understood that $\p_z \varphi$ exists almost
everywhere in the Lebesgue $m$-dimensional measure of $N$ (This
measure can be equivalently defined either as done here using a
chart of $N$ or in a more intrinsic way regarding $N$ as a subset of
$TM$, see \cite[Sect.\ 3.3.3]{evans98}). Theorem \ref{flo} is
proved.

\begin{remark} \label{djh}
We pause for a moment to outline how to generalize this proof for
$C^{1,1}$ sprays over a $C^{3,1}$ manifold, the further generalization
to the $C^{k,1}$, $k\ge 0$, spray case being then analogous.

The idea is to introduce variables ${x}^\mu_{k,\beta}$,
${x}^\mu_{k,\dot{\beta}}$,  $\dot{x}^\mu_{k,\beta}$,
$\dot{x}^\mu_{k,\dot{\beta}}$  and add to the system
(\ref{p1})-(\ref{p2}) the equations obtained (formally)
differentiating the right-hand side (\ref{p1})-(\ref{p2}) with
respect to the initial conditions
\begin{align*}
x^\mu_{k+1,\beta}(t)&=\delta^\mu_\beta+\int_0^t \dot{x}_{k,\beta}^\mu(t)\, \dd t , \\
x^\mu_{k+1,\dot \beta}(t)&=\int_0^t \dot{x}_{k,\dot \beta}^\mu(t)\, \dd t , \\
\dot{x}^\mu_{k+1,\beta}(t)&= \int_0^t (\p_{x^\alpha} H^\mu)
x^\alpha_{k,\beta}+ (\p_{\dot{x}^\alpha} H^\mu) \dot
x^{\alpha}_{k,\beta} \,
\dd t,\\
\dot{x}^\mu_{k+1,\dot \beta}(t)&= \delta^\mu_\beta+ \int_0^t
(\p_{x^\alpha} H^\mu) x^\alpha_{k,\dot \beta}+ (\p_{\dot{x}^\alpha}
H^\mu) \dot x^{\alpha}_{k,\dot \beta} \, \dd t.
\end{align*}
The proof of the convergence for $k\to \infty$ proceeds as above.
From here it follows that ${x}^\mu_{,\beta}$,
${x}^\mu_{,\dot{\beta}}$,  $\dot{x}^\mu_{,\beta}$,
$\dot{x}^\mu_{,\dot{\beta}}$  are Lipschitz as they solve a first
order Lipschitz ODE. Since the solution is unique ${x}^\mu_{,\beta}$
coincides with $\p_{x_0^\beta} {x}^\mu$ and so on.
\end{remark}

\subsection{Strong differentiability of exp {\normalsize (Theorems \ref{jfd} and \ref{jfe})}}
Let us consider two solutions $x^\mu(t)$ and $y^\mu(t)$ of the
geodesic equation with initial conditions $(x^\mu_0,\dot x^\mu_0)$,
$(y^\mu_0,\dot y^\mu_0)$,  which belong to the domain given by Eqs.\
(\ref{d1}). Starting from these initial conditions we define
inductively functions $x_k^\mu(t)$, $y_k^\mu(t)$ as above, and
subtract the corresponding Eqs.\ (\ref{p1})-(\ref{p2}), rearranging
them as follows
\begin{align*}
x_{k+1}(t)- y_{k+1}(t)-(x_0-y_0)-(\dot{x}_0 -\dot{y}_0) t &=\int_0^t
[\dot{x}_k(t)-\dot{y}_{k}(t)-(\dot{x}_0-\dot{y}_0)
]\, \dd t ,\\
\dot{x}_{k+1}(t)-\dot{y}_{k+1}(t) -(\dot{x}_0-\dot{y}_0)
&=\!\int_0^t\!\![H(x_k(t),\dot{x}_k(t))-H(y_k(t),\dot{y}_k(t))]\,
\dd t .
\end{align*}
Thus
\begin{align*}
\Vert x_{k+1}(t)- y_{k+1}(t)-(x_0-y_0)-(\dot{x}_0 -\dot{y}_0) t
\Vert &\le \int_0^t  \Vert
\dot{x}_k(t)-\dot{y}_{k}(t)-(\dot{x}_0-\dot{y}_0) \Vert \, \dd t ,
\end{align*}
 and
\begin{align*}
\Vert \dot{x}_{k+1}(t)&-\dot{y}_{k+1}(t) -(\dot{x}_0-\dot{y}_0)
\Vert
\le \int_0^t \Vert H(x_k(t),\dot{x}_k(t))-H(y_k(t),\dot{y}_k(t))\Vert \, \dd t \\
&\le \int_0^t  \{A \Vert x_k -y_k\Vert +B \Vert \dot
x_k-\dot y_k \Vert \}\, \dd t \\
&\le \int_0^t  \{A [\Vert x_k -y_k-(x_0-y_0)-(\dot{x}_0 -\dot{y}_0)
t \Vert +\Vert x_0-y_0 \Vert+\Vert\dot{x}_0 -\dot{y}_0 \Vert
t]\\
&\qquad \ +B [\Vert \dot x_k-\dot y_k -(\dot{x}_0 -\dot{y}_0)
\Vert+\Vert\dot{x}_0 -\dot{y}_0 \Vert] \}\, \dd t.
\end{align*}
By induction it follows that
\begin{align*}
\Vert x_{k}(t)\!- \!y_{k}(t)\!-\!(x_0-y_0)\!-\!(\dot{x}_0
-\dot{y}_0) t \Vert & \le \max \{D \Vert x_0-y_0 \Vert, \Vert
\dot{x}_0-\dot{y}_0\Vert \} (\frac{e^{Dt}-1}{D}-t) ,\\
\Vert \dot{x}_{k}(t)-\dot{y}_{k}(t) -(\dot{x}_0-\dot{y}_0) \Vert
&\le \max \{D \Vert x_0-y_0 \Vert ,\Vert \dot{x}_0-\dot{y}_0\Vert \}
(e^{Dt}-1) .
\end{align*}
The induction hypothesis for the former equation and $k=1$ is satisfied because of Eq.\ (\ref{p1}).
The induction hypothesis for the latter equation and $k=1$ is satisfied because using Eq.\ (\ref{kiu})
\begin{align*}
\Vert \dot{x}_{1}(t)-\dot{y}_{1}(t) -(\dot{x}_0-\dot{y}_0) \Vert &
\le \int_0^t A \Vert x_0-y_0\Vert+B\Vert \dot x_0-\dot y_0\Vert \dd t\\
& \le \max \{D \Vert x_0-y_0 \Vert ,\Vert \dot{x}_0-\dot{y}_0\Vert \} Dt.
\end{align*}
Taking the limit $k\to\infty$ we obtain
\begin{align}
\Vert x(t)\!-\!y(t)\!-\!(x_0-y_0)\!-\!(\dot{x}_0 -\dot{y}_0) t \Vert
& \le \max \{ D \Vert x_0-y_0 \Vert , \Vert \dot{x}_0-\dot{y}_0\Vert
\} (\frac{e^{Dt}-1}{D}-t)
, \label{nik}\\
\Vert \dot{x}(t)-\dot{y}(t) -(\dot{x}_0-\dot{y}_0) \Vert &\le \max
\{D \Vert x_0-y_0 \Vert ,\Vert \dot{x}_0-\dot{y}_0\Vert \}
(e^{Dt}-1) , \label{nil}
\end{align}
Let us disregard for the moment the last inequality. We have the
trivial inequality
\begin{equation}
\Vert x_0- y_0-(x_0-y_0) \Vert  \le \max \{ D \Vert x_0-y_0 \Vert ,
\Vert \dot{x}_0-\dot{y}_0\Vert \} ({e^{Dt}-1}-Dt)
. \label{nic}\\
\end{equation}
On  $X=\mathbb{R}^{2n}$ let us consider a function $f: X\to X$
defined as follows
\[
f(x_0, \dot{x}_0)=(x_0, x(1)).
\]
Clearly, $f$ is the coordinate expression of the exponential map.
Let $L: X \to X$ be the linear map given by the matrix
\begin{equation} \label{nsd}
L=\begin{pmatrix} I &  0\\ I & I \end{pmatrix}
\end{equation}
with $I$ the $n\times n$ identity matrix. Recalling that $D\le 1$
the inequalities (\ref{nic})  and (\ref{nik})  can be rewritten for
$t=1$
\[
\Vert f(x_0,\dot x_0)-f(y_0,\dot y_0)-L((x_0,\dot x_0)-(y_0,\dot
y_0)) \Vert \le \Vert (x_0,\dot x_0)- (y_0,\dot y_0) \Vert
(\frac{e^{D}-1}{D}-1)
\]
for every $(x_0,\dot x_0)$ and $(y_0,\dot y_0)$ such that $\Vert
(x_0,\dot x_0)\Vert< \delta$ and $\Vert (y_0,\dot y_0)\Vert<
\delta$.

But Eq.\ (\ref{fun}) shows that $D$ as a function of $\delta$
satisfies $\lim_{\delta \to 0} D(\delta)=0$, thus $f$ is strongly
differentiable at $(0,0)$ with strong differential $L$. Since any
point $p\in M$ corresponds to zero coordinates for some chart
compatible with the atlas, we have that $\exp$ is differentiable at
any point in the image of $p\mapsto 0_p$.

%As $L$ is invertible by Leach's inverse function theorem the map
%$\exp$ is a local Lipeomorphism a

Let us observe that if a strongly differentiable function $f(x,y)$
is strongly differentiable then keeping $x$ fixed we obtain a
function $f(x,\cdot)$ which is strongly differentiable. This fact
follows easily from the definition of strong differentiability.

Furthermore, the composition of strongly differentiable functions is
strongly differentiable. As a consequence,  the pointed exponential
map $\exp_p:=\pi_2(f(0, \cdot))$ is strongly differentiable at the
origin. From Eq.\ (\ref{nsd}) we read that the Jacobian is given by
the $n\times n$ identity matrix. Since it is invertible $\exp_p$
establishes a local Lipeomorphism.

\subsubsection{Normal vector bundle case {\normalsize (Theorem \ref{jfe})}}

Let us prove Theorem \ref{jfe}. Let $p\in \phi(S)$, let $\{s^k;
k=1,\cdots, l\}$ be coordinates on $S$ at $\phi^{-1}(p)$ and let
$x_0^\mu(s):=\phi^\mu(s)$. The tangent vectors $(\p_k
x_0^\mu)\p_\mu$, $k=1,\cdots, l$, provide a (Lipschitz) base for the
tangent space at any point in a neighborhood of $p$. Applying the
Gram-Schmidt procedure to
\[
 (\p_1 x_0^\mu)\p_\mu, \cdots ,  (\p_l
x_0^\mu)\p_\mu, \p_1, \cdots, \p_n,
\]
 discarding the last found
$l$ null vectors, and keeping the last $n-l$ non-trivial vectors,
we are left with a Lipschitz base of the normal space. Call this base
$e^\mu_k \p_\mu$, $k=1,\cdots, n-l$. By construction it is
Lipschitz. Thanks to this base we can introduce a chart of
coordinates $(s,y) \in \mathbb{R}^l\times \mathbb{R}^{n-l}$ over
$\nu(S)$, so as to represent each $v\in  \nu(S)$, $v=(x_0,\dot
x_0)$, as follows
\begin{align}
x_0^\mu(s,y)&=x^\mu_0(s^i), \\
 \dot{x}_0^\mu(s,y)&=  y^j e^\mu_j(s^i). \label{lrd}
\end{align}
This system of equations gives the map between $\nu(S)$ and $TM$
expressed through the respective coordinate charts.
 A naive calculation would
suggest that the Jacobian of this transformation is given by the
$(n+n) \times (l+(n-l))$ matrix
\[
J=
\begin{pmatrix}
\p_k x^\mu_0& 0\\
y^j \p_i e^\mu_j & e^\mu_j
\end{pmatrix}.
\]
However, since $e^\mu_j$ is just Lipschitz the $n\times l$ block
matrix $y^j \p_i e^\mu_j$ is not well-defined. Nevertheless, this
expression suggests that $J$ should be as given for $y=0$, namely on
$\phi(S)$, for in this case the ill defined block matrix vanishes.

Let us prove that for $y=0$, $J$ is the strong differential of the
map $(s,y) \to (x_0,\dot x_0)$. Let $\epsilon>0$. We need only to
show that for $y_1,y_2$ sufficiently close to zero and for $s_1,s_2$
sufficiently close to $s$
\begin{align*}
\Vert x_0(y_2,s_2)- x_0(y_1,s_1)-(\p_i x_0) (s_2^i-s_1^i)\Vert\ \le
\epsilon (\Vert
s_2-s_1\Vert+\Vert y_2-y_1\Vert  ) \\
\Vert \dot x_0(y_2,s_2)- \dot x_0(y_1,s_1)-e_j(s)
(y_2^j-y_1^j)\Vert\ \le \epsilon (\Vert s_2-s_1\Vert+\Vert
y_2-y_1\Vert  )
\end{align*}
The former inequality is a consequence of the fact that $x_0(s)$ is
$C^1$ hence strongly differentiable. The latter inequality can be
rewritten using Eq.\ (\ref{lrd})
\[
\Vert  y_2^j e_j(s_2)- y_1^j e_j(s_1)  -e_j(s) (y_2^j-y_1^j)\Vert\
\le \epsilon (\Vert s_2-s_1\Vert+\Vert y_2-y_1\Vert  )
\]
which is a consequence of the Lipschitzness of $e_j^\mu$.

The map
\[
(s,y) \to  (x_0,\dot x_0) \to \exp (x_0,\dot{x}_0) \to \pi_2(\exp
(x_0,\dot{x}_0) )
\]
being the composition of (1) a strongly differentiable map at $y=0$,
(2) a strongly differentiable map at $\dot{x}_0=0$, (3) the strongly
differentiable projection map $(x,y)\to y$, and being such that
 $\dot{x}_0(s,0)=0$,  is strongly
differentiable at $y=0$, with strong differential given by the $n \times n$ matrix $J_{\pi_2}
LJ=(\p_k x_0^\mu \, , \, e_j^\mu)(s)$ which is invertible because its
columns are linearly independent vectors. Thus the hypothesis of
Leach's inverse function theorem are satisfied.

\subsection{Convex neighborhoods {\normalsize (Theorem \ref{nsx})}} \label{vko} Let us prove that for sufficiently small
$\delta$, $\bar{B}(p,\delta)$ is reversible strictly convex normal.

By the strong differentiability of $\exp$ there
is an open neighborhood $O\ni (p,p)$, $O\subset M\times M$, which is
Lipeomorphic to an open neighborhood $U$ of $0_p\in TM$. Let
$\delta>0$ be sufficiently small that $\bar{B}(p,\delta)^2\subset O$ and
$\delta$ satisfies Eqs.\ (\ref{jux})-(\ref{juz}). Let $q\in
B(p,\delta)$, and let
\[
N_q=(\exp^{-1} {B}(p,\delta)^2 ) \cap \pi^{-1}(q)=[\exp^{-1}
(\{q\}\times {B}(p,\delta)) ] \cap \pi^{-1}(q)=\exp_q^{-1}
{B}(p,\delta) .
\]
The first defining equality shows that this set is
 open in the topology of $\pi^{-1}(q)=T_qM$.
Furthermore, $\exp_q\vert_{N_q}\colon N_q\to {B}(p,\delta)$ is
injective because $\{q\}\times N_q\subset U$ and so is
$\exp\vert_{U}$. It is surjective, for if $r\in {B}(p,\delta)$ then
$(q,r)\in  {B}(p,\delta)^2\subset O$, thus there is some $v\in
T_qM$, $v\in U$, such that $\exp_q v=r$. Finally,
$\exp_q\vert_{N_q}$ is Lipschitz because so is $\exp\vert_U$, and
$\exp^{-1}_q\vert_{{B}(p,\delta)}$ is Lipschitz because so is
$\exp^{-1}\vert_{{B}(p,\delta)^2}$. Thus we have proved that for
each $q\in B(p,\delta)$, there is an open set $N_q$ such that
$\exp_q: N_q\to B(p,\delta)$ is a Lipeomorphism. We stress that we
have not yet shown that $N_q$ is star-shaped.

Analogously, for sufficiently small $\delta$, for each $q\in
B(p,\delta)$, there is an open set $\tilde{N}_q(=\tilde{\exp}_q^{-1}
{B}(p,\delta))$ such that $\tilde{\exp}_q: \tilde{N}_q\to
B(p,\delta)$ is a Lipeomorphism.

%As $L$ is invertible, $f$ restricted to a suitable open neighborhood
%of $(0,0)$ is a local Lipeomorphism to its image.

Our choice of $\delta$ allows us to prove the strict convexity of
$\bar{B}(p,\delta)$ for both the spray and the reverse spray, and
hence that each $N_q$ and $\tilde{N}_q$ are star-shaped. The key
observation is that the continuous function defined on
$\pi^{-1}(\bar{B}(p,\delta))$ (``$\cdot$'' is the Euclidean scalar
product in $\mathbb{R}^n$ induced by the chart)
\[
z^\pm(x,v):=\Vert v\Vert^2+x \cdot H(x,\pm v)
\]
is positive if restricted to the unit tangent bundle
$\bar{B}(p,\delta)\times S^{n-1}$. Indeed,
\[
z^\pm(x,e):=1+x \cdot H(x,\pm e)\ge 1-\delta M >0
\]
where the last inequality is a consequence of Eq.\  (\ref{jux}). Let
us consider a geodesic segment contained in $\bar{B}(p,\delta)$. Any
geodesic $x(t)$ is $C^{2,1}$ thus $\Vert x \Vert^2(t)$ is $C^{2,1}$
and
\[
\frac{\dd^2 \Vert x \Vert^2}{ \dd t^2}= 2  (\Vert \dot x\Vert^2+ x
\cdot \frac{\dd^2 x }{ \dd t^2} ) =2 (\Vert \dot x\Vert^2+ x \cdot
H(x,\dot{x}) ) =2z^+(x,\frac{\dot x}{\Vert \dot x \Vert})\Vert \dot
x\Vert^2>0 ,
\]
where we used the fact that by definition a geodesic is regular,
i.e. $\dot{x}\ne 0$.

Analogously, if we consider a geodesic for the reverse spray
\[
\frac{\dd^2 \Vert x \Vert^2}{ \dd t^2}= 2  (\Vert \dot x\Vert^2+ x
\cdot \frac{\dd^2 x }{ \dd t^2} ) =2 (\Vert \dot x\Vert^2+ x \cdot
H(x,-\dot{x}) ) =2z^-(x,\frac{\dot x}{\Vert \dot x \Vert})\Vert \dot
x\Vert^2>0 .
\]
 As a consequence $\Vert x \Vert^2(t)$ takes its maximum value
at the boundary of its interval of definition, that is, in
correspondence of the endpoints of the geodesic segment which,
therefore, must be contained in   $\bar{B}(p,\delta)$. Furthermore,
if its endpoints are at the boundary of the ball then its interior
points stay in ${B}(p,\delta)$ because the inequality is strict.
Thus $C:=\bar{B}(p,\delta)$ is reversible strictly convex normal.
The fact that $\exp$ establishes a Lipeomorphism between an open
subset of $TC$ and $B(p,\delta)^2$ is immediate from the inclusion
$B(p,\delta)^2\subset O$. Analogously, the same result holds for
$\tilde{\exp}$. The proof of Theorem \ref{nsx} is complete.

\subsection{Role of the coordinate affine structure and position
vector} \label{aff}

We shall need the following result on the behavior of the position
vector on a convex neighborhood.
\begin{theorem} \label{nsc}
Under the assumptions of Theorem \ref{nsx}, for every $\epsilon>0$ we have for any sufficiently small $\delta$ that $C=B(p,\delta)$ not only satisfies the conclusions of Theorem \ref{nsx} but also that for every $q_1,q_2,q_1',q_2',q\in C$ we have,
interpreting the minus sign as that given by the affine structure
induced by the coordinate chart
\begin{align}
&\Vert [P(q'_1,q'_2)-(q'_2-q'_1)]-[P(q_1,q_2)-(q_2-q_1)]\Vert\le
\epsilon \max\{\Vert q_1'-q_1\Vert, \Vert q_2'-q_2\Vert\}, \nonumber \\
&\Vert P(q,q_2)-P(q,q_1)-(q_2-q_1)\Vert\le \epsilon \Vert
q_2-q_1\Vert,  \label{ksc} \\
&\Vert P(q_1,q_2)-(q_2-q_1)\Vert\le \epsilon \Vert q_2-q_1\Vert, \label{ksd}  \\
& \Vert P(q_1,q_2)\Vert \le \epsilon. \label{kse}
\end{align}
In the connection case, if the coordinate chart is chosen so as to
make $\Gamma^\mu_{\alpha \beta}(p)$ vanish, then $\delta$ can be
also chosen such that
\begin{equation}
\Vert P(q_1,q_2)+P(q_2,q_1)\Vert\le \epsilon \Vert q_2-q_1\Vert^2.
\label{ksf}
\end{equation}
\end{theorem}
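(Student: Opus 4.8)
The plan is to recognise the position vector $P$ as a composition of two maps, each strongly differentiable at the relevant point, to compute its strong differential on the diagonal of $M\times M$, and then to read off the first four estimates as nothing more than the definition of strong differentiability; the connection-case estimate (\ref{ksf}) will need a separate elementary integral bound exploiting the reversibility of connections.

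First I would fix the chart so that $x^\mu(p)=0$ and use the max norm on $\mathbb{R}^n\oplus\mathbb{R}^n$ (and correspondingly on the $M\times M$ coordinates), as in Section~\ref{dep}. As established in the proof of Theorem~\ref{jfd}, the coordinate representative of $\exp$ is strongly differentiable at $0_p$ with invertible strong differential $L$ of Eq.~(\ref{nsd}), so by Leach's theorem $\exp^{-1}$ is strongly differentiable at $(p,p)$ with strong differential $L^{-1}$; writing $\exp^{-1}(q_1,q_2)=(q_1,v(q_1,q_2))$ with $v(q_1,q_2):=\exp_{q_1}^{-1}(q_2)$, this says $v$ is strongly differentiable at $(p,p)$ with strong differential $(\delta q_1,\delta q_2)\mapsto \delta q_2-\delta q_1$. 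Next I would introduce $\psi(x_0,\dot x_0):=\dot x(1,x_0,\dot x_0)$, the velocity at time $1$ of the geodesic with datum $(x_0,\dot x_0)$. Specialising inequality (\ref{nil}) to $t=1$ and using $D\le 1$ gives
\[
\Vert \psi(x_0,\dot x_0)-\psi(y_0,\dot y_0)-(\dot x_0-\dot y_0)\Vert\le (e^{D}-1)\,\Vert (x_0,\dot x_0)-(y_0,\dot y_0)\Vert ,
\]
and since $D=D(\delta)\to 0$ as $\delta\to 0$ by Eq.~(\ref{fun}), $\psi$ is strongly differentiable at $0_p$ with strong differential the projection $(\delta x_0,\delta\dot x_0)\mapsto \delta\dot x_0$. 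Because $P(q_1,q_2)=\psi\big(\exp^{-1}(q_1,q_2)\big)$, property~(v) (composition of strongly differentiable maps), together with the fact that strong differentials coincide with Fréchet differentials and hence satisfy the chain rule, shows that $P$ is strongly differentiable at $(p,p)$ with strong differential $\Lambda\colon(\delta q_1,\delta q_2)\mapsto \delta q_2-\delta q_1$.

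Spelling out what strong differentiability of $P$ at $(p,p)$ means, with the max norm, gives: for the prescribed $\epsilon>0$ there is $\delta_1>0$ such that for all $q_1,q_2,q_1',q_2'\in B(p,\delta_1)$ one has $\Vert P(q_1',q_2')-P(q_1,q_2)-[(q_2'-q_1')-(q_2-q_1)]\Vert\le \epsilon\max\{\Vert q_1'-q_1\Vert,\Vert q_2'-q_2\Vert\}$, which is precisely the first displayed inequality of the theorem. Applying it to the pairs $(q,q_2)$ and $(q,q_1)$ yields (\ref{ksc}); taking $q_1=q$ there and using $P(q,q)=0$ (the constant geodesic is the connecting one) yields (\ref{ksd}); and (\ref{kse}) follows from (\ref{ksd}), since $\Vert P(q_1,q_2)\Vert\le(1+\epsilon)\Vert q_2-q_1\Vert<2(1+\epsilon)\delta$, so it is enough to also require $2(1+\epsilon)\delta\le\epsilon$. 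As the conclusions of Theorem~\ref{nsx} also hold for all sufficiently small $\delta$, everything so far is valid at once once $\delta$ is below a common threshold.

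For the connection case the key observation is that $H^\mu(x,v)=-\Gamma^\mu_{\alpha\beta}(x)v^\alpha v^\beta$ is a reversible spray, so if $x\colon[0,1]\to C$ is the unique geodesic from $q_1$ to $q_2$ then $t\mapsto x(1-t)$ is the unique geodesic from $q_2$ to $q_1$, whence $P(q_2,q_1)=-\dot x(0)$. Hence
\[
P(q_1,q_2)+P(q_2,q_1)=\dot x(1)-\dot x(0)=\int_0^1 H^\mu(x(t),\dot x(t))\,\dd t=-\int_0^1 \Gamma^\mu_{\alpha\beta}(x(t))\,\dot x^\alpha(t)\dot x^\beta(t)\,\dd t .
\]
Since $\Gamma$ is Lipschitz with $\Gamma(p)=0$ and, by convexity of $C=B(p,\delta)$, $x(t)\in B(p,\delta)$ for all $t$, one has $\Vert\Gamma(x(t))\Vert\le K\delta$ for a fixed $K$; and the a priori bound $\Vert\dot x(t)\Vert\le\Vert\dot x(0)\Vert/(1-\Vert\dot x(0)\Vert Mt)$ of Section~2 (valid on $[0,1]$ once $\Vert\dot x(0)\Vert M$ is small, which it is for $\delta$ small since $\exp^{-1}$ is continuous), combined with $\Vert\dot x(0)\Vert=\Vert v(q_1,q_2)\Vert\le(1+\epsilon)\Vert q_2-q_1\Vert$ from (\ref{ksd}), gives $\Vert\dot x(t)\Vert\le 2(1+\epsilon)\Vert q_2-q_1\Vert$. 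Therefore $\Vert P(q_1,q_2)+P(q_2,q_1)\Vert\le 4K(1+\epsilon)^2\,\delta\,\Vert q_2-q_1\Vert^2$, and shrinking $\delta$ once more to make $4K(1+\epsilon)^2\delta\le\epsilon$ yields (\ref{ksf}); taking $\delta_0$ to be the minimum of all the thresholds that appeared completes the argument. I expect the only genuinely delicate point to be securing that last bound $\Vert\dot x(t)\Vert\le C\Vert q_2-q_1\Vert$ — linear in $\Vert q_2-q_1\Vert$ rather than merely in $\delta$ — since without this scaling (which the homogeneity-flavoured Picard estimate supplies) one would only reach (\ref{ksf}) with a factor $\delta^{3}$ on the right, which is too weak when $q_2$ lies much closer to $q_1$ than to $\partial C$.
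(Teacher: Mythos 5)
Your argument is correct and follows essentially the paper's own route: strong differentiability of $\exp$ on the zero section plus Leach's theorem, composed with the time-$1$ velocity map controlled by (\ref{nil}), gives zero strong differential for $(q_1,q_2)\mapsto P(q_1,q_2)-(q_2-q_1)$ at $(p,p)$, from which the four-point inequality and (\ref{ksc})--(\ref{kse}) follow by specialization, and the connection estimate comes from $P(q_1,q_2)+P(q_2,q_1)=\dot x(1)-\dot x(0)=\int_0^1 H(x,\dot x)\,\dd t$ with $H$ small near $p$ and $\Vert \dot x\Vert=O(\Vert q_2-q_1\Vert)$. The only differences are cosmetic: the paper gets $\Vert\dot x(t)\Vert\le(1+\epsilon)\Vert q_2-q_1\Vert$ directly from the four-point inequality (using $P(x(0),x(t))=t\dot x(t)$, $P(x(1),x(t))=-(1-t)\dot x(t)$) and bounds $H$ on $C$ by continuity rather than by the Lipschitz bound $K\delta$, and your appeal to (\ref{ksd}) for $\Vert\dot x(0)\Vert$ should strictly be to (\ref{ksd}) for the swapped pair, since $\dot x(0)=-P(q_2,q_1)$ whereas $P(q_1,q_2)=\dot x(1)$ (or, equivalently, to the strong differentiability of $\exp^{-1}$ you had already established).
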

This section will be devoted to the proof of this result.

%We collect here some facts on the exponential map which will be
%useful in Sect.\ \ref{tud}.

%In this section we make some observations that will be useful in
%Sect.\ \ref{tud}.
Our coordinate system $\{x^\mu\}$ in a neighborhood $U$ of $p$
induces an affine structure which allows us to compare tangent
vectors at different points of $U$. Let us consider the geodesic $x(t)$ such that its initial condition $(x_0,\dot x_0)$ satisfy Eq.\ (\ref{d1}). We ask if, keeping $x_0$ fixed, the map $\dot{x}_0\mapsto
\dot{x}(1)$ is injective (observe that $\dot{x}(1)=P(x_0,x(1))=P(x_0,\exp_{x_0} \dot x_0$).

\begin{lemma} \label{hot}
Let $\Vert(x_0,\dot{x}_0)\Vert \le \delta$.  For fixed  base point
$x_0$, the map $\dot{x}_0\mapsto \dot{x}(1)$  is strongly differentiable at the origin,
the strong differential being the $n\times n$ identity matrix $I$.
The map $(x_0, \dot{x}_0)\mapsto (x_0,\dot{x}(1))$ is also strongly
differentiable wherever $\dot{x}_0=0$, the strong differential being
the matrix
\[
\begin{pmatrix}
I & 0 \\ 0& I
\end{pmatrix}.
\]
Thus for sufficiently small $\delta$ both maps are injective (and
bi-Lipschitz) with inverse strongly differentiable wherever
$\dot{x}(1)=0$.
\end{lemma}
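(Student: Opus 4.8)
The whole statement is read off inequality (\ref{nil}) evaluated at $t=1$. Write $F$ for the map $(x_0,\dot x_0)\mapsto(x_0,\dot x(1))$ expressed in the chart coordinates, put $L:=\begin{pmatrix}I&0\\0&I\end{pmatrix}$ for the block identity on $\mathbb R^{2n}$, equip $\mathbb R^{2n}$ with the norm $\Vert(a,b)\Vert=\max\{\Vert a\Vert,\Vert b\Vert\}$ of Section \ref{dep}, and set $\kappa:=e^{D}-1$. Since the first component of $F$ is the linear map $(x_0,\dot x_0)\mapsto x_0$, the bound (\ref{nil}) at $t=1$, together with $D\le 1$ (which turns the maximum on its right-hand side into $\Vert(x_0,\dot x_0)-(y_0,\dot y_0)\Vert$), gives for all initial data in the $\delta$-ball
\[
\bigl\Vert F(x_0,\dot x_0)-F(y_0,\dot y_0)-L\bigl((x_0,\dot x_0)-(y_0,\dot y_0)\bigr)\bigr\Vert=\Vert\dot x(1)-\dot y(1)-(\dot x_0-\dot y_0)\Vert\le\kappa\,\Vert(x_0,\dot x_0)-(y_0,\dot y_0)\Vert .
\]
By Eq.\ (\ref{fun}), $D=D(\delta)\to 0$ as $\delta\to0$, hence $\kappa=\kappa(\delta)\to0$ as well.

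To obtain strong differentiability of $F$ at a zero-section point $(x_0^{\ast},0)$ with strong differential $L$, note that the entire Picard--Lindel\"of setup of Section \ref{oyt}, and hence the displayed estimate, is available in \emph{any} chart around the base point; since every point of $M$ has zero coordinates in some atlas chart, and since strong differentiability and the value of the strong differential are preserved under the $C^{1}$ changes of coordinates that $C^{2,1}$ changes of chart on $M$ induce on $TM$ and on the coordinate expression of $F$, we may assume $x_0^{\ast}$ is the chart origin and, given $\epsilon>0$, choose $\delta$ so small that $\kappa(\delta)\le\epsilon$; then the displayed inequality is precisely the strong-differentiability estimate (\ref{nis}) with linear map $L$. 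The first assertion of the lemma follows by restriction: freezing $x_0$ in a strongly differentiable map leaves a strongly differentiable map whose strong differential is the corresponding diagonal block, here the $n\times n$ identity $I$ (equivalently, set $y_0=x_0$ in the displayed bound).

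Now fix $\delta$ small enough that $\kappa(\delta)<1$. Applying the triangle inequality to the displayed bound both ways, using that $L$ is the identity (so $\Vert L(a-b)\Vert=\Vert a-b\Vert$), gives $(1-\kappa)\Vert a-b\Vert\le\Vert F(a)-F(b)\Vert\le(1+\kappa)\Vert a-b\Vert$ for all $a,b$ in the $\delta$-ball, so $F$ is a bi-Lipschitz homeomorphism of the $\delta$-ball onto its image, in particular injective, and, restricting to a fixed first coordinate, so is $\dot x_0\mapsto\dot x(1)$. Finally, by uniqueness of the geodesic flow a geodesic with $\dot x(1)=0$ is stationary, whence $\dot x_0=0$; thus the points of $F(\text{domain})$ with vanishing second coordinate are exactly the fixed points $(x_0,0)=F(x_0,0)$, and at each of them $L$ is invertible, so Leach's inversion theorem makes the inverse of $F$ (which, by the bi-Lipschitz bound, is globally defined on the image) strongly differentiable there with strong differential $L$; the same holds for $\dot x_0\mapsto\dot x(1)$, whose strong differential $I$ is invertible. (One may dispense with Leach: for $p=F(a)$, $q=F(b)$ near such a fixed point, $\Vert F^{-1}(p)-F^{-1}(q)-L^{-1}(p-q)\Vert=\Vert F(a)-F(b)-L(a-b)\Vert\le\kappa\Vert a-b\Vert\le\tfrac{\kappa}{1-\kappa}\Vert p-q\Vert\to0$ as $\delta\to0$.)

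The estimate (\ref{nil}) does essentially all the analytic work, and the only genuinely delicate point is the quantifier structure: for a fixed $\delta$ the constant $\kappa(\delta)$ is fixed, so upgrading the uniform "Lipschitz-close-to-linear" bound to pointwise strong differentiability really uses $\kappa(\delta)\to 0$ (Eq.\ (\ref{fun})) together with the freedom to re-center the chart at the base point — and that last move rests on the routine, but worth stating, fact that strong differentiability is a chart-independent notion under $C^{2,1}$ chart changes on $M$.
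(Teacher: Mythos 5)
Your proposal is correct and takes essentially the same route as the paper's proof: inequality (\ref{nil}) at $t=1$, combined with the trivial first-component identity and the max norm on $\mathbb{R}^n\oplus\mathbb{R}^n$, identifies the candidate strong differential as the block identity, $D(\delta)\to 0$ from Eq.\ (\ref{fun}) upgrades this to strong differentiability, and Leach's inversion theorem gives the bi-Lipschitz inverse with strong differentiability where $\dot{x}(1)=0$. Your explicit re-centering of the chart at the base point (and the remark that the block-identity differential is unchanged under the induced chart change at zero-velocity points) merely spells out a step the paper leaves implicit there, having performed the same move earlier when proving strong differentiability of $\exp$ on the zero section.
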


\begin{proof}
Let us consider the exponential map of the vectors $\dot{x_0},
\dot{y}_0$ with base point $x_0$ (thus $y_0=x_0$). By Eq.\
(\ref{nil}) setting $t=1$
\[ \Vert \dot{x}(1)-\dot{y}(1)
-(\dot{x}_0-\dot{y}_0) \Vert \le \Vert \dot{x}_0-\dot{y}_0\Vert
(e^{D}-1) .
\]
Since $D(\delta)\to 0$ for $\delta \to 0$, the map $\dot{x}_0\mapsto
\dot{x}(1)$ is strongly differentiable at the origin, with strong
differential the identity matrix.

As for the map $(x_0, \dot{x}_0)\mapsto (x_0,\dot{x}(1))$ it
suffices to include in the previous analysis the trivial inequality
\[ \Vert x_0-y_0-(x_0-y_0)
 \Vert \le \Vert x_0-y_0\Vert
(e^{D}-1) ,
\]
and recall that on $\mathbb{R}^n\oplus \mathbb{R}^n$ we use the norm
$\max\{\Vert \, \Vert, \Vert \,\Vert\}$.
 The last claim follows from Leach's inverse function
theorem.
\end{proof}

%We have already proved that on the same domain the map
%$(x_0,\dot{x}_0) \mapsto (x_0,x(1))$ is invertible and bi-Lipschitz
%with an inverse which is strongly differentiable for $x(1)=0$, thus
%the composite map $(x_0,x(1))\mapsto (x_0,\dot{x}(1))$ is strongly
%differentiable at $x(1)=0$ with strong differential
%\[
%L^{-1}=\begin{pmatrix} I&0\\-I& I \end{pmatrix}.
%\]
%This is the same strong differential of the map $(x_0,x(1))\mapsto
%(x_0,x(1)-x_0)$, thus the map $(x_0,x(1))\mapsto
%(x_0,x(1)-x_0-\dot{x}(1))$ has zero strong differential of $x(1)=0$.
%This coordinate claim is equivalent to the following

\begin{proposition} \label{juy}
Let $p\in M$, and let $\{x^\mu\}$ be a chart in a neighborhood $U$
of $p$. Let $P(r,q)$ be the position vector of $q$ with respect to
$r$. There is an open convex  neighborhood $C\ni p$, such that for
every $q_2,q_1\in C$, the map $(q_1,q_2)\mapsto
P(q_1,q_2)-(q_2-q_1)$, interpreted with the affine structure induced
by $\{x^\mu\}$, is strongly differentiable on the diagonal
$q_1=q_2$, with zero strong differential.
%Similarly, for
%$q_1,q_2,q_3$ the map $(q_1,q_2,q_3)\mapsto
%P(q_3,q_2)-P(q_3,q_1)-(q_2-q_1)$ and the map $(q_1,q_2,q_3)\mapsto
%P(q_3,q_2)\cdot P(q_1,q_2)+P(q_3,q_1)\cdot P(q_2,q_1)-P(q_1,q_2)^2$
% has vanishing strong
%differential at $q_1=q_2=q_3$.
\end{proposition}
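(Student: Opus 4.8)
The plan is to realise the map $(q_1,q_2)\mapsto P(q_1,q_2)-(q_2-q_1)$ as the difference of the linear (hence strongly differentiable, with itself as strong differential) map $(q_1,q_2)\mapsto q_2-q_1$ and a map $(q_1,q_2)\mapsto P(q_1,q_2)$ whose strong differential on the diagonal can be computed to be $(v_1,v_2)\mapsto v_2-v_1$. The latter map will in turn be written as a composition of two maps whose strong differentiability has already been established: the inverse of the exponential map and the geodesic flow map $\dot x_0\mapsto \dot x(1)$ of Lemma \ref{hot}.

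First I would fix a chart $\{x^\mu\}$ at $p$ and, using Theorems \ref{jfd} and \ref{nsx} and Lemma \ref{hot}, take $C=B(p,\delta)$ with $\delta$ so small that $C$ is strictly convex normal (so that $P$ is defined on $C\times C$), that $\exp$ is a Lipeomorphism from a star-shaped neighborhood of the zero section onto $C\times C$, and that the flow map of Lemma \ref{hot} is available at every base point of $C$. In this chart the exponential map is $f(x_0,\dot x_0)=(x_0,x(1))$, which by Theorem \ref{jfd} and Eq.\ (\ref{nsd}) is strongly differentiable along the zero section with invertible strong differential $L=\begin{pmatrix} I & 0\\ I & I\end{pmatrix}$. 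By Leach's inversion theorem, $f^{-1}\colon (q_1,q_2)\mapsto (q_1,\exp_{q_1}^{-1}q_2)$ is then strongly differentiable at every diagonal point $(q,q)$, $q\in C$, with strong differential $L^{-1}=\begin{pmatrix} I & 0\\ -I & I\end{pmatrix}$. Writing $g$ for the map $(x_0,\dot x_0)\mapsto (x_0,\gamma'_{\dot x_0}(1))$, Lemma \ref{hot} gives that $g$ is strongly differentiable wherever $\dot x_0=0$, with strong differential $\begin{pmatrix} I & 0\\ 0 & I\end{pmatrix}$. Since $f^{-1}(q,q)=(q,0)$ and $g\circ f^{-1}\colon (q_1,q_2)\mapsto (q_1,P(q_1,q_2))$, property (v) of the strong differential (stability under composition) gives that $(q_1,q_2)\mapsto (q_1,P(q_1,q_2))$ is strongly differentiable on the diagonal of $C\times C$ with strong differential $\begin{pmatrix} I & 0\\ 0 & I\end{pmatrix}\begin{pmatrix} I & 0\\ -I & I\end{pmatrix}=\begin{pmatrix} I & 0\\ -I & I\end{pmatrix}$. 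Composing with the (linear) projection onto the second factor and then subtracting the map $(q_1,q_2)\mapsto q_2-q_1$, which has strong differential $(v_1,v_2)\mapsto v_2-v_1$ as well, would yield the statement.

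The hard part is not any single estimate — the genuine analysis was carried out in Lemma \ref{hot}, which itself rests on the Picard--Lindel\"of bounds (\ref{nik})--(\ref{nil}) — but the bookkeeping needed to make the $\epsilon$--$\delta$ uniform over the diagonal of $C\times C$, as required by the sharper Theorem \ref{nsc}, of which this Proposition provides the first inequality. I expect this to come out directly from the explicit estimates: combining (\ref{nik}) and (\ref{nil}) at $t=1$ should give
\[
\big\Vert [P(q_1',q_2')-(q_2'-q_1')]-[P(q_1,q_2)-(q_2-q_1)]\big\Vert\le c(\delta)\,\big\Vert (x_0,\dot x_0)-(y_0,\dot y_0)\big\Vert ,
\]
where $(x_0,\dot x_0)$ and $(y_0,\dot y_0)$ are the initial data of the two connecting geodesics, the max-norm of the paper is used, and $c(\delta)\to 0$ as $\delta\to 0$; invoking the uniform Lipschitz dependence of these data on the endpoints $q_1,q_2,q_1',q_2'$ (Theorem \ref{jfd}) and shrinking $\delta$ once more would finish both Proposition \ref{juy} and the first inequality of Theorem \ref{nsc}.
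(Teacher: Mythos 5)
Your proposal is correct and follows essentially the same route as the paper: factor $(q_1,q_2)\mapsto(q_1,P(q_1,q_2))$ as the flow map of Lemma \ref{hot} composed with $(q_1,q_2)\mapsto(q_1,\exp_{q_1}^{-1}q_2)$ (strongly differentiable on the diagonal with differential $L^{-1}$ via Leach), multiply the strong differentials, and subtract the linear map $(q_1,q_2)\mapsto(q_1,q_2-q_1)$. The uniformity concern in your last paragraph is not actually needed for Theorem \ref{nsc}: its four-point inequality follows directly from strong differentiability at the single point $(p,p)$ with zero differential, by the very definition (\ref{nis}), after shrinking $\delta$.
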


\begin{proof}
%Let $C$ be a convex neighborhood of $p$ such that $\exp$ establishes
%a Lipeomorphism between an open subset of $TC$ and $C\times C$. Let
%$x^\mu:C\mapsto \mathbb{R}^n$ be ($C^{2,1}$) coordinates on $C$,
%such that $x^\mu(p)=0$, $g_{\mu \nu}(p)=\eta_{\mu \nu}$, $g_{\mu
%\nu,\alpha}(p)=0$, where $\eta$ is the canonical (signature)
%diagonal matrix (the identity matrix in the Riemannian case). The
%first two conditions are accomplished with an affine change of
%coordinate. The last one through a quadratic transformation
%$x'^{\mu}=x^\mu-\frac{1}{2}\Gamma^\mu_{\alpha \beta}(p)x^\alpha
%\beta$ which sends $\Gamma(p)$ to zero.

Let $C$ be a convex neighborhood of $p$ such that $\exp$ establishes
a Lipeomorphism between an open subset of $TC$ and $C\times C$. Let
$x^\mu:C\mapsto \mathbb{R}^n$ be ($C^{2,1}$) coordinates on $C$,
such that $x^\mu(p)=0$.

We proved that $\exp$ is strongly differentiable on the zero section of $TC$ with differential $L$ on a suitable trivialization (Eq.\ (\ref{nsd})), thus by  Leach's inverse function theorem,   $\exp^{-1}$ is strongly differentiable on the diagonal of
$C\times C$ with differential $L^{-1}$. Stated in another way, the map in coordinates given by $(q_1,q_2) \mapsto
(q_1,\exp^{-1}_{q_1} q_2)$ is strongly differentiable on the
diagonal with differential $L^{-1}$.

Lemma \ref{hot} proves that the coordinate map $(q_1, v)\mapsto
P(q_1,\exp_{q_1} v)$ is strongly differentiable at the origin with
strong differential the identity, thus the coordinate map
$(q_1,q_2) \mapsto (q_1,P(q_1,q_2))$ is strongly differentiable on
the diagonal with strong differential
\[
L^{-1}=\begin{pmatrix} I&0\\-I& I \end{pmatrix}.
\]
 This is the same strong differential of the map $(q_1,q_2)
\mapsto (q_1, q_2-q_1)$ where the difference makes sense using the
affine structure induced by the coordinate chart. Thus the map
$(q_1,q_2)\mapsto (q_1,P(q_1,q_2)-(q_2-q_1))$ has vanishing strong
differential on the diagonal of $C\times C$ which implies that the
map $(q_1,q_2)\mapsto P(q_1,q_2)-(q_2-q_1)$ has vanishing strong
differential on the diagonal of $C\times C$.
\end{proof}

In particular the map $(q_1,q_2)\mapsto P(q_1,q_2)-(q_2-q_1)$ is
strongly differentiable at $(p,p)$ thus for every $\epsilon>0$ the
constant $\delta>0$ and hence the convex neighborhood
$C=B(p,\delta)$ can be chosen such that for every $q_1,q_2, q_1',
q_2'\in C$
\[
\Vert [P(q'_1,q'_2)-(q'_2-q'_1)]-[P(q_1,q_2)-(q_2-q_1)]\Vert\le
\epsilon \max\{\Vert q_1'-q_1\Vert, \Vert q_2'-q_2\Vert\}.
\]
Thus for every $q_1,q_2,q\in C$ (set $q_1'\to q$, $q_2'\to q_2$,
$q_2\to q_1$, $q_1\to q$)
\[
\Vert P(q,q_2)-P(q,q_1)-(q_2-q_1)\Vert\le \epsilon \Vert
q_2-q_1\Vert,
\]
and (set $q=q_1$)
\begin{equation} \label{usf}
\Vert P(q_1,q_2)-(q_2-q_1)\Vert\le \epsilon \Vert q_2-q_1\Vert.
\end{equation}
Thus $\Vert P(q_1,q_2)\Vert \le (1+\epsilon) \Vert q_2-q_1\Vert$ and
the diameter of $C$ can be chosen sufficiently small that $\Vert
P(q_1,q_2)\Vert\le \epsilon$.

Suppose now that the spray is a connection (hence reversible) and assume to have chosen
the coordinate system in such a way that for every  every normalized
vector $e$, $H^{\mu}(p,e)=0$. This is always possible through an
invertible quadratic coordinate change, so that the new coordinate
system is still $C^{2,1}$. By continuity $C$ can be chosen sufficiently small
that on $C$, $\Vert H\Vert:=\sup_{x\in C}\sup_{\Vert e\Vert=1} \Vert
H(x,e)\Vert \le \epsilon$.

Let $x:[0,1]\to C$, $x(0)=q_1'$, $x(1)=q_1$, be a geodesic. Let
$q_2,q_2'=x(t)$, $t\in [0,1]$ then the above 4-points inequality
gives
\[
\Vert P(x(0),x(t))-P(x(1),x(t))-(x(1) -x(0))\Vert\le \epsilon \Vert
x(1)-x(0)\Vert .
\]
We observe that $P(x(0),x(t))=t \dot{x}(t)$ and $P(x(1),x(t))=-(1-t)
\dot{x}(t)$ thus for every $t\in [0,1]$
\[
\Vert \dot{x}(t)-(x(1) -x(0))\Vert\le \epsilon \Vert x(1)-x(0)\Vert.
\]
We have $P(x(0),x(1))=\dot{x}(1)$ and
$P(x(1),x(0))=-\dot{x}(0)$ thus setting $r(t)=\dot{x}(t)-(x(1)
-x(0))$,
\begin{align*}
&\Vert P(x(0),x(1))+P(x(1),x(0))\Vert =\Vert
\dot{x}(1)-\dot{x}(0)\Vert
= \Vert \int_0^1 H(x(s),\dot{x}(s)) \dd s\Vert\\
&\le  \int_0^1 \Vert H(x(s),\hat \dot{x}(s)) \Vert \, \Vert \dot{x}(s)\Vert^2 \dd s\le \epsilon
\int_0^1 \Vert \dot{x}(s)\Vert^2 \dd s\\
&\le \epsilon [ \int_0^1 \Vert r(s)\Vert^2 \dd s+\int_0^1\Vert
x(1)-x(0)\Vert^2 \dd s+ 2 [x(1)-x(0)]\cdot \int_0^1 r(s)\dd s] \\
&\le \epsilon (1+\epsilon^2) \Vert x(1)-x(0)\Vert^2,
\end{align*}
thus a redefinition of $\epsilon$ gives the last inequality of
Theorem \ref{nsc}.

\subsection{Local Lipeomorphisms {\normalsize (Theorems \ref{jfd} and \ref{jfe})}}

Let $\varphi(v,t)=\gamma'_v(t)$ be the geodesic flow on $TM$. We
have, $\exp v=(\pi(v),\pi(\gamma_v(1)))$. If $\varphi(v,1)$ is well
defined then, by the continuity of the geodesic flow, so is
$\varphi(w,1)$ for $w$ near $v$. Thus $\Omega$ is open and,
analogously, so is $\Omega_p$.

The map $\exp$ is locally Lipschitz wherever it is defined because
$\pi$ is locally Lipschitz and if $v\in \Omega$, $\varphi(v,1)$ is
Lipschitz for $w$ near $v$ by the local Lipschitzness of the
geodesic flow (that is by the dependence on initial conditions of
solutions to the geodesic equation, see Sect.\ \ref{dep}). Analogously, $\exp_p$ is
locally Lipschitz on $\Omega_p$.
%Furthermore, for every $v\in \Omega$
%\[
%\frac{\dd }{\dd t} \exp_{\pi(v)}(tv)=\gamma'_v(t) ,
%\]
%thus this vector is indeed Lipschitz in $t$ and $v$ by the Lipschitz
%dependence of the geodesic flow on the initial conditions.

We have shown that for each $p$ there is a convex normal relatively compact open
neighborhood $C_p$, such that $\exp$ provides a Lipeomorphism
between the star-shaped relatively compact open set $\exp^{-1} C_p^2$ and $C_p^2$.

Let $\{\exp^{-1} C_{p_i}^2\}$ be a locally finite
covering of the image of the zero section $Z$ of $TM$, and let
$N=\cup_{p_i} \exp^{-1} C_{p_i}^2$.  Observe that if $w\in N$ then $w\in \exp^{-1} C_{p_i}^2$ for some $i$ and so it cannot be $\exp w=\pi(w)$ unless $w\in Z$ by the injectivity of $\exp$ on $C_p$.
By construction for
every compact set $K\subset M$, we have that $\pi^{-1}(K)\cap
\bar{N}$ is compact.

We want to show that there is an open subset
$E\subset N$, containing $Z$  such that $\exp\vert_E$ is injective
and hence a Lipeomorphism on its image. Let $K_i$, $K_i \subset
\textrm{Int} K_{i+1}\subset M$, be a sequence of compact sets such
that $\cup_i K_i=M$, and let $\bar{N_j}\subset N$, $\bar{N}_{j+1} \subset N_j$,
be open neighborhoods of $Z$ such that for every compact set $K$, $\pi^{-1}(K)\cap \bar N_j$ is  compact and $\cap_j \bar{N}_j=Z$. Clearly
$\exp$ is injective on $Z$.
 By induction, suppose that there is an increasing map $\sigma$ defined on $\{1,
 2, \cdots , k\}$ such that the map $\exp$ is
injective on the  set ($E_0:=Z$)
\[
E_k:=  Z\cup \bigcup_{i=1}^k \pi^{-1}(\textrm{Int}K_i)\cap {N}_{\sigma(i)}.
\]
and hence $E_j$ for $j\le k$.
Then we can define $\sigma(k+1)$ such that the same property holds
with $k$ replaced by $k+1$. For if not  we could find  sequence
$v_j, w_j \in E_k \cup (\pi^{-1}(\textrm{Int} K_{k+1}) \cap {N}_{j} )$, $v_j\ne w_j$, such that
for every $j$, $\exp v_j=\exp w_j$. On  the first component the last equality reads $\pi(v_j)=\pi(w_j)$. However, by injectivity of $\exp$ on $E_k$,  $v_j$, $w_j$ do not both belong to $E_k$ thus passing to a subsequence we can assume without loss of generality that
 $v_j  \in  \pi^{-1}(\textrm{Int} K_{k+1}) \cap {N}_{j}$.
Observe that  $v_j$ belongs to the compact set $\pi^{-1}(K_{k+1})\cap \bar{N}$,  and passing  to the limit we obtain
that up to subsequences $v_j$ converges to
\[
v \in \cap_j[\pi^{-1}( K_{k+1})\cap \bar{N}_j]\subset   \pi^{-1}(K_{k+1})\cap Z.
\]
In particular, $\pi(w_j)=\pi(v_j)$ converges to some point of $K_{k+1}$, thus for sufficiently large $j$, $w_j$ is contained in a compact set $\pi^{-1}(K_{k+2})\cap \bar{N}_1\subset N$ and so converges up to subsequences to some vector $w\in N$.
Using the continuity of $\pi$ we obtain $\pi(v)=\pi(w)$.
%
%
%$w_j\in (E_k\cap \pi^{-1}(K_{k+2}))\cup (\pi^{-1}(K_{k+2})\cap \bar{N}_j)$
%converges up to subsequences to
%\[
%w \in \cap_j[ (E_k\cap \pi^{-1}(K_{k+2}))\cup (\pi^{-1}(K_{k+2})\cap \bar{N}_j)]\subset   E_k\cup (Z\cap \pi^{-1}(K_{k+2}))\subset E_k.
%\]
By the continuity of $\exp$, $\exp v=\exp w$, and using $v\in Z$, $\pi(v)=\exp w$.  Thus $\pi(w)=\exp w$, and since $w\in {N}$ we have by the observation above that $w\in Z$, thus $w=v=0_{\pi(v)}$.
%As both $v$ and $w$ belong to $E_k$ we have $v=w$ which implies that they both belong to $Z$.
 Let $C$ be a convex normal neighborhood of $r:=\pi(v)$,
($v=0_r$) then $\exp^{-1} C^2$ is a neighborhood of $v\in TC$ and the vectors
 $v_j$ and $w_j$ for sufficiently large $j$ enter it  which contradicts the injectivity of $\exp$ on $\exp^{-1} C^2$.
Finally,
$E=\cup_j E_j$ gives the searched open set.

%then $\exp$ provides a Lipeomorphism between the star-shaped open set
%$V$ and $\exp V=\cup_p C_p^2$ (observe that $\exp$ is one-to-one by
%definition).[CITA \cite[Prop.\ 26]{oneill83}]

The proof in the normal bundle case is analogous. We have to start
from a  sequence $p_i\in \phi(S)$ such that $C_{p_i}$ is
a locally finite covering of $\phi(S)$, define $N= \cup_{p_i}
\exp^{-1} C_{p_i}\subset \nu(S)$ and proceed as in \cite[Prop.\ 26,
Chap.\ 7 ]{oneill83} to prove the injectivity of $\exp_{\nu(S)}$ on
an open subset $E\subset N$.

%
%it is the result of the composition of the geodesic flow for $t=1$
%with $\pi$. Since both are locally Lipschitz so it $\exp$.
%Analogously, $\exp_p$ is locally Lipschitz.
%
%In order to complete the proof of Theorem \ref{jfd} we have to prove
%the openness of $\Omega$ and $\Omega_p$ for each $p$.

%
%
%Since $V\subset \Omega$, by a maximal chain argument there is a
%maximal open subset $\Omega'\subset TM$, such that $\Omega'\subset
%\Omega$. Suppose that $\Omega'\ne \Omega$, then there is some $v\in
%\Omega\backslash \Omega'$. By the continuity of $\gamma_v(t)$ there
%is some first $0<b<1$ such that $q:=\gamma_v(b)\in \Omega'\backslash
%O$.

\section{Proofs II: Pseudo-Finsler sprays and connections} \label{tud}

In the next section we prove Gauss' Lemma for sprays which come from
a pseudo-Finsler metric $L$.

\subsection{Gauss' Lemma {\normalsize (Theorem \ref{gux})}}
Let us  prove  Eq.\ (\ref{jwf}). Since $\exp_p^{-1}$ is Lipschitz,
$D^2_p$ is Lipschitz. By Theorem \ref{flo} the function $
2g_{P(p,q)}( P(p,q), \cdot) $ where $P(p,q):=\gamma_{\exp_p^{-1}
q}'(1)$, is Lipschitz in $q$. By Theorem \ref{pok} we need only to
show that the differential of $D^2_p$ exists and coincides with the
previous expression almost everywhere on $N$.

However, we know that $\exp$ is differentiable almost everywhere
over the star-shaped open set $\exp^{-1}N\subset T_pM$, and hence, by
Fubini's theorem, that it is almost everywhere differentiable on
almost every radial line passing through the origin (the expression
a.e.\ here refers to the ($n-1$)-dimensional Lebesgue measure of a
Euclidean sphere contained in $\exp^{-1}N$). It suffices to take $q$
on the exponential map of one of these geodesics (Lipeomorphisms
preserve zero measure sets \cite[Sect.\ 2.4]{evans98}).

Let $w\in T_qM$ and let $\sigma: [-a,a]\to N$ be a ($C^{2,1}$)
geodesic segment such that $\sigma'(0)=w$. Let $x^{(s)}: [0,1] \to N$,
$t\mapsto x^{(s)}(t)$, be the unique geodesic such that
$x^{(s)}(0)=p$, $x^{(s)}(1)=\sigma(s)$. Let $v(s)=\exp_p^{-1}
\sigma(s)$, since $\exp_p^{-1}$ is Lipschitz $v: [-a,a]\to T_pM$ is
Lipschitz. In particular $S=\{u\in T_pM: u=tv(s), t\in[0,1], s\in
[-a,a]\}$ is a Lipschitz submanifold of $T_pM$, thus by Theorem
\ref{flo} $x(t,s):=x^{(s)}(t)=\exp_p (tv(s))$ is Lipschitz  in
$(t,s)$. Furthermore, still by Theorem \ref{flo}
\begin{quote}
(*):  for almost every $s$, for  every $t\in [0,1]$,
$x(t,\cdot)$ is differentiable at $s$ and the derivative $\p_2
x(t,s)$ is locally Lipschitz in $t$, locally uniformly with respect
to those $s$ where it is defined. Finally, for any such $s$ we have
that for almost every $t\in [0,1]$, the mixed partial derivatives
$\p_1\p_2 x(t,s)$, $\p_2\p_1 x(t,s)$ are locally bounded and
coincide.
\end{quote}
The quantity $L(x^{(s)}(t),x^{(s)}_t(t))$ is independent of $t$
hence coincident with $L(p,v(s))$. The function $v(s)$ being
Lipschitz is differentiable almost everywhere. Let $s$ be such that
$v'(s)$ exists and (*) holds
\begin{align*}
\frac{1}{2}\,\p_{\sigma'(s)} D^2_p&= \frac{\dd }{\dd s}\,L(p,v(s))=
\frac{\p }{\p s}\, L(x(t,s),x_t(t,s)) = x^\mu_s  \frac{\p L}{\p
x^\mu}+ \frac{\p x^\mu_t}{\p s} \frac{\p L}{\p x^\mu_t}\\&=
\frac{\p}{\p t}(x^\mu_s \frac{\p L}{\p
x^\mu_t})-x^\mu_s(\frac{\p}{\p t} \frac{\p L}{\p x^\mu_t}-\frac{\p
L}{\p x^\mu})+(\frac{\p x^\mu_t}{\p s} -\frac{\p x^\mu_s}{\p
t})\frac{\p L}{\p x^\mu_t}
\end{align*}
where $t$ can be chosen arbitrarily. By (*) for almost every $t$ the
last term vanishes. Moreover, the second term of the right-hand side
vanishes because $x^{(s)}(t)$ is a geodesic, hence it solves the
Euler-Lagrange equation for the  Lagrangian $L$.
Integrating in $t$ over the interval $[0,1]$ we obtain, taking into
account that the left-hand side does not depend on $t$ and using
Eq.\ (\ref{njr})
\begin{align}
\frac{1}{2}\, \p_{\sigma'(s)} D^2_p&=
g_{x_t(1,s)}(x_t(1,s),\sigma'(s))-g_{x_t(0,s)}(x_t(0,s),x_s(0,s)) \nonumber\\
&=g_{\gamma'_{v(s)}(1)}(\gamma'_{v(s)}(1), \sigma'(s)), \label{mid}
\end{align}
where we used the fact that since $x(0,s)=p$, it is $x_s(0,s)=0$.
Evaluated at $s=0$ the previous expression proves Eq.\ (\ref{jwf})
and hence the first claim of the theorem.

Let $\alpha\colon [0,a)\to N$, $t \mapsto \alpha(t)$, be an integral
curve of $P$. Let us differentiate $D^2_p$ along it
\[
\frac{\dd D^2_p(\alpha(t))}{\dd t}=2 g_{P(\alpha(t))}(P(\alpha(t)),
\dot\alpha(t))=2g_{P(\alpha(t))}(P(\alpha(t)),
P(\alpha(t)))=2D^2_p(\alpha(t)).
\]
Then $D^2_p(\alpha(t))=D^2_p(\alpha(0)) \exp(2t)$, and since
$\exp^{-1} N$ is star-shaped the $-t$-time flow maps
$(D^2_p)^{-1}(s)$ to $(D^2_p)^{-1}(s e^{-2t})$ for $t>0$. Since $P$
is Lipschitz, by the mentioned result on the dependence of solutions
to first order ODE on the initial conditions,  the flow map is
Lipschitz, and since it is injective and can be inverted it is
actually a Lipeomorphism between $(D^2_p)^{-1}(s)$ and its image on
$(D^2_p)^{-1}(s e^{-2t})$.

Finally, suppose that $\exp_p$ is differentiable at $v\in T_p
M\backslash 0$. We observe that $\gamma'_v(1)=(d \exp_p)_v v$. Let $w\in T_v(T_pM)\sim T_pM$ and define $v(s):=v+s w$ and $\sigma(s):=\exp_p(v+s w)$, so that by the differentiability assumption $\sigma'(s)=(d \exp_p)_v w$. Due to the choice of curve $\sigma(s)$ we have $D^2_p(\sigma(s))=2L(p,\exp^{-1}_p(\sigma(s)))=2L(p,v+sw)$ thus from Eq.\ (\ref{njr})
\[
\frac{1}{2} \frac{\dd D^2_p(\sigma(s))}{\dd s}\vert_{s=0}=g_v(v,w)
\]
while from Eq.\ (\ref{mid})
\[
\frac{1}{2} \frac{\dd D^2_p(\sigma(s))}{\dd s}\vert_{s=0}=g_{(d \exp_p)_v v}((d \exp_p)_v v,(d \exp_p)_v w).
\]

\subsection{Local properties of geodesics in pseudo-Finsler geometry  {\normalsize (Theorem \ref{pwa})}}

Let $\sigma\colon [0,1] \to N$ be an AC-curve starting from $p$ and ending at $q\in N$.

Let us consider the Finsler case. By continuity  there is a last
value $\hat s\ge 0$, such that $D_p(\sigma(\hat{s}))=0$ (possibly
$\hat{s}=0$ or $\hat{s}=1$).

The  length of the geodesic connecting $p$ to $q$ is $D_p(q)$ and is
positive if and only if $q\ne p$. The statement of the theorem is
trivial for $q=p$ thus let us assume $q\ne p$ (so that $\hat{s}\ne
1$). Since $D_p^2$ is $C^{1,1}$, $D_p$ is $C^{1,1}$ in the region
$N\backslash\{p\}$. Thus $D_p(\sigma(s))$ being the composition of a
locally Lipschitz and an absolutely continuous function is
absolutely continuous. We have for $s\ge \hat{s}$
\begin{align*}
D_p(\sigma(s))&=\int_{\hat{s}}^s \!\frac{\dd D_p(\sigma(s))}{\dd s}
\,\dd s= \int_{\hat{s}}^s\!
\frac{1}{D_p(\sigma(s))}\,g_{P(p,\sigma(s))}(P(p,\sigma(s)),\sigma'(s)) \,\dd s\\
&= \!\!\int_{\hat{s}}^s \!\! g_{\hat{P}(p,\sigma(s))}(
\hat{P}(p,\sigma(s)),\sigma'(s))\, \dd s \le\!\! \int_{\hat{s}}^s \,
\!\!\!\sqrt{g_{\sigma'(s)}(\sigma'(s),\sigma'(s))} \,\dd s\le
l[\sigma],
\end{align*}
where $\hat P:=P/\sqrt{g_P(P,P)}$. In the last step we used the
analog of the Cauchy-Schwarz inequality for Finsler geometry
\cite[Theor. 1.2.2]{bao00}. For $s=1$ the above inequality proves
that the length of $\sigma$ is no smaller than that of the geodesic
connecting its endpoints. If they are equal then for almost every
$s\in [0,1]$, we have the equality $g_{\hat{P}(p,\sigma(s))}(
\hat{P}(p,\sigma(s)),\sigma'(s))= \sqrt{
g_{\sigma'(s)}(\sigma'(s),\sigma'(s))} $, thus, by the equality case
in the Finslerian Cauchy-Schwarz inequality we have that   for
almost every $s\in [0,1]$, $\sigma'\propto P(p,\sigma(s))$. If we introduce
spherical normal coordinates $(r,\theta_1,..., \theta_{n-1})$, as
this coordinate chart is Lipschitz related to those of $M$, $\sigma$
is still absolutely continuous in this chart. Thus since
$\sigma'\propto \p_r$ almost everywhere, the angular coordinates
cannot change over $\sigma$, otherwise since $\theta_i(\sigma(s))$ is the
integral of its own derivative one would get that $\sigma'$ is not
radial in a set of non-vanishing measure, a contradiction. Thus the
image of $\sigma$ coincides with the image of an integral curve of
$P$ and hence coincides with the image of the geodesic $\eta(r)$
connecting $p=\sigma(0)$ with $q=\sigma(1)$. Since the coordinates of
the spherical normal chart are Lipschitz functions, the composition
$r(s)$ is absolutely continuous. By definition  $r$ is an affine
parameter over the geodesic which has the same image of $\sigma$.
The map is necessarily increasing, for if $r(s_2)\le r(s_1)$ for
$s_1<s_2$, then we would have $r'< 0$ (by definition of AC-curve
$r'\ne 0$ almost everywhere) in a subset of measure different  from
zero on $[s_1,s_2]$, and it would be easy to obtain a shorter curve
cutting a piece of domain from $\sigma$, a contradiction to the
length minimization assumption.

Let us consider now the Lorentzian-Finsler case with $\sigma$ causal
and future directed. We recall that the analog to the reverse
Cauchy-Schwarz inequality for Finsler spacetimes reads
\cite{minguzzi13c}:
\begin{quote}
Let $v_1,v_2$ be causal and future directed then
\begin{equation} \label{nun}
-g_{v_1}({v}_1,{v}_2)\ge \sqrt{-g_{v_1}({v}_1,{v}_1)}\,
\sqrt{-g_{v_2}({v}_2,{v}_2)},
\end{equation}
with equality if and only if $v_1$ and $v_2$ are proportional.
\end{quote}

%Let us first show that it cannot be $F(\sigma(s))>0$ for some $s$.
%
%Thus the inequality $F(\sigma(s))>0$ would imply that
%$g(P(\sigma(s)), \sigma'(s))>0$

Suppose that for some $\tilde{s}$, $ D_p^2(\tilde{s})< 0$. The
Lorentzian-Finsler length of the geodesic connecting $p$ to $q$ is:
$D^L_p(q):=(- D^2_p(q))^{1/2}$. Since $D_p^2$ is $C^{1,1}$, $D^L_p$
is $C^{1,1}$ in the region $D_p^2< 0$. Thus $D^L(\sigma(s))$ being
the composition of a locally Lipschitz and an absolutely continuous
function is absolutely continuous. We know that $D_p^2(\tilde{s})<
0$ and by continuity the same inequality holds in an interval
$[\tilde{s},s]$ provided  $s$ is sufficiently close to $\tilde{s}$.
We have
\begin{align}
D_p^L(\sigma(s))&- D_p^L(\sigma(\tilde{s}))=\int_{\tilde{s}}^s
\frac{\dd D_p^L(\sigma(s))}{\dd s}\, \dd s \nonumber\\
& =-
\int_{\tilde{s}}^s
\frac{1}{D_p^L(\sigma(t))}\,g_{P(p,\sigma(s))}(P(p,\sigma(s)),\sigma'(s))\, \dd s \nonumber \\
&= -\int_{\tilde{s}}^s
\!\!g_{\hat{P}(p,\sigma(s))}(\hat{P}(p,\sigma(s)),\sigma'(s)) \, \dd s
\ge \int_{\hat{s}}^s \!\!\! \sqrt{-
g_{\sigma'(s)}(\sigma'(s),\sigma'(s))} \,\dd
s \nonumber\\
&\ge\, l[\sigma], \label{lol}
\end{align}
where $\hat P:=P/\sqrt{-g_P(P,P)}$. In the last inequality we used
the above Finslerian reverse Cauchy-Schwarz inequality.

The equality so obtained proves that once $\sigma$ enters a region
with $D^2_p<0$ (the chronological future of $p$) it remains in that
region.

Now let $\eta\colon [-\epsilon,0] \to N$, $\eta(0)=p$, be a small
future directed timelike geodesic contained in a reversible convex normal neighborhood
of $p$, $C\subset N$. For sufficiently small $s$, $\sigma(s)\in C$,
and the curve obtained concatenating $\eta$ with $\sigma$ which
connects $\eta(-\epsilon)$ to $\sigma(s)$ starts  with a timelike
geodesic, hence it enters the chronological future of
$\eta(-\epsilon)$, and hence, by the above argument there is a
future directed timelike geodesic $\nu^{(\epsilon)}$ connecting
$\eta(-\epsilon)$ with $\sigma(s)$. Letting $\epsilon \to 0$, and
using the continuity of the exponential map $\tilde{\exp}$ for the reverse spray at $\sigma(s)$ we infer
the existence of a geodesic connecting $p$ to $\sigma(s)$, which by
the continuity of $g_{v}(v,v)$ at $ T_{\sigma(s)}M$ must be future
directed causal. As $s$ is arbitrary we have shown that in a maximal
closed interval $[0,b]\subset [0,1]$, $b>0$, we have
 $D^2_p(\sigma(s))\le 0$.

Let us prove that if for $a\in (0,b]$, $D^2_p(\sigma(a))=0$ then
$\sigma\vert_{[0,a]}$ is a lightlike geodesic up to parametrizations
and hence that $D^2_p=0$ over $[0,a]$.

Observe that $D^2_p$ is Lipschitz thus $D^2_p(\sigma(s))$ is
absolutely continuous
\[
D^2_p(\sigma(a))=\int_0^a \frac{\dd D^2_p(\sigma(s))}{\dd s}\, \dd
s=2 \int_0^a g_{P(p,\sigma(s))}(P(p,\sigma(s)), \sigma'(s))\, \dd s.
\]
Since on the region $D^2_p\le 0$, we have
$g_{P(p,\sigma(s))}(P(p,\sigma(s)), \sigma'(s))\le 0$ for almost every $s$ (by the Finslerian reverse Cauchy-Schwarz inequality since $\sigma'$ is future directed causal almost everywhere), thus we can have $D^2_p(\sigma(a))=0$
only if $\sigma'\propto P$ for almost every $s$ in $[0,a]$.
Introducing a Euclidean scalar product on $T_pM$, associated
spherical normal coordinates over $N$, and arguing as above for the
Finsler case we obtain that $\sigma\vert_{[0,a]}$ is an integral
curve of $P$, hence a lightlike geodesic issued from $p$.

From now on
let $a$ be the maximum value of $s$ for which $D^2_p(\sigma(s))=0$.

It remains only to prove that $b=1$. Suppose not then $a=b$
otherwise $D^2_p(b)<0$ which would imply the same inequality also in
$(b,1]$, a contradiction to $b<1$. Set $p'=\sigma(b)$ and take a
reversible convex normal neighborhood $C'\ni p'$,  $C'\subset N$. Arguing
as above proves that for any sufficiently small $\delta$, $p'$ is
connected to $\sigma(b+\delta)$ by a future directed causal geodesic
$\eta: [0,1]\to C'$. This geodesic cannot be the prolongation of the
lightlike geodesic $\sigma_{[0,b]}$ for we would get
$D_p^2(\sigma(b+\alpha\delta))\le 0$, $\alpha\in [0,1]$,  a
contradiction to the maximality of $b$. Thus the scalar product
$g_{P(p,\eta(t))}(P(p,\eta(t)), \eta'(t))$ is negative for $t=0$ and
hence in a neighborhood of $t=0$.  Now observe that $D^2_p$ is
Lipschitz thus $D^2_p(\eta(t))$ is absolutely continuous, and for
sufficiently small $t$
\[
D^2_p(\eta(t))=D^2_p(\sigma(b))+\int_0^t\! \frac{\dd
D^2_p(\eta(t))}{\dd t}\, \dd t=2\! \int_0^t\!\!
g_{P(p,\eta(t))}(P(p,\eta(t)), \eta'(t))\, \dd t<0.
\]
As the concatenation of $\sigma_{\vert_{[0,b]}}$ with $\eta$ is a
causal AC-curve and on it $D^2_p$ becomes negative at some point,
and it remains so, we have at the endpoint
$D^2_p(\sigma(b+\delta))=D^2_p(\eta(1))<0$. As $\delta$ is
arbitrary we get a contradiction to the maximality of $b$. The
contradiction proves that $b=1$.

If $\sigma$ is a lightlike geodesic up to parametrization, then
clearly its Lorentzian-Finsler length vanishes and the inequality
$D_p^L(\sigma(1))\ge l(\sigma)$ is satisfied. Suppose that $\sigma$ is
not a lightlike geodesic up to parametrizations then $a<1$, and its
Lorentzian-Finsler length is given just by the contribution of
$\sigma_{[a,1]}$. Let $\tilde{s}\in [a,1]$ so that
$D^2_p(\sigma(\tilde{s}))<0$. By (\ref{lol})
\[
D^L_p(\sigma(1))\ge l(\sigma_{[\tilde{s},1]})
\]
and taking the limit $\tilde{s}\to a$ we obtain $D^L_p(\sigma(1))\ge
l(\sigma)$. This proves that $\sigma$ has a Lorentzian-Finsler
length no larger than that of the  geodesic connecting its
endpoints.

Now, suppose by contradiction that they have the same Lorentzian-Finsler
length and that $\sigma$ is not a causal geodesic up to
parametrizations. Then necessarily $a<1$, for otherwise it would be
a lightlike geodesic. But then from (\ref{lol}), for $\tilde{s}>a$,
\[
D^L_p(\sigma(1))\ge D^L_p(\sigma(\tilde{s})+
l(\sigma_{[\tilde{s},1]})\ge
l(\sigma_{[0,\tilde{s}]})+l(\sigma_{[\tilde{s},1]})=l(\sigma).
\]
Thus the equality implies that the first inequality is actually an
equality which implies that
$g_{P(p,\sigma(s))}(P(p,\sigma(s)),\sigma'(s))=0$ for almost every
$s\in [\tilde{s}, 1]$, and hence, by the arbitrariness of
$\tilde{s}$, $\sigma'\propto P$ for almost every $s\in [a,1]$.
Introducing again spherical normal coordinates and arguing as above
proves that the image of $\sigma\vert_{[a,1]}$ is an integral curve
of $P$  (and hence the prolongation of
$\sigma\vert_{[0,a]}$ if $a\ne 0$) thus it is the image of a
geodesic.

Finally, suppose that the image of $\sigma$ coincides with that of a
causal geodesic $\eta$. Since the coordinates of the spherical
normal chart are Lipschitz functions, the composition $r(s)$ is
absolutely continuous. By definition  $r$ is an affine parameter
over the geodesic $\eta$. The map $r(s)$ is necessarily increasing,
for if $r(s_2)\le r(s_1)$ for $s_1<s_2$, then we would have $r'\le
0$ and hence in a subset of measure different  from zero on
$[s_1,s_2]$, which would imply that $\frac{\dd }{\dd s }
\sigma=(\frac{\dd }{\dd r } \eta) r'$ is not future directed causal in a set of
measure different from zero, a contradiction to the definition of future directed
causal AC-curve.

\subsection{Strong convexity of squared Riemannian distance {\normalsize (Theorem \ref{keg})}}

Let $C$ be a convex neighborhood of $p$ as in Theorems \ref{nsx} and
\ref{nsc} where $\epsilon\in (0,1)$ and where the coordinate chart is chosen so that $\Gamma^\mu_{\alpha \beta}(p)=0$.

 Let us consider the Riemannian case. From Eq.\ (\ref{ksc}) using the Cauchy-Schwarz
inequality
\begin{equation}
\vert P(q,q_2)\cdot (q_2-q_1)-P(q,q_1)\cdot
(q_2-q_1)-(q_2-q_1)^2\vert\le \epsilon \Vert q_2-q_1\Vert^2.
\end{equation}
Let $g_{q}$ be the matrix of $g$ at $q$. Since $g$ is $C^{1}$ it is
strongly differentiable and by the choice of coordinate system its
strong derivative vanishes at $p$. Thus $C$ can be chosen sufficiently small
that for every $q_1,q_2\in C$
\[
\Vert g_{q_2}-g_{q_1}\Vert \le \epsilon \Vert q_2-q_1\Vert
\]
Moreover, $C$ can be chosen sufficiently small that once expressed in the
coordinate chart $\Vert g-I\Vert \le \epsilon$ on $C$.

%Finally, $C'$ can be chosen so small that for every $q_1,q_2\in C'$
%\[
%\Vert P(q_1,q_2)+P(q_2,q_1)\Vert\le \epsilon \Vert q_2-q_1\Vert^2.
%\]

Thus if $D^2_{q}$ is the squared distance function from $q$, using
also Eqs. (\ref{ksc}), (\ref{ksd}) and (\ref{kse}) we are able to
prove Eq.\ (\ref{coa})
\begin{align*}
& \frac{1}{2}\vert [\dd D^2_q(q_2)- \dd D^2_q(q_1)] (q_2-q_1)-2(q_2-q_1)^2
\vert \\
&=\vert g_{q_2}(P(q,q_2), (q_2-q_1))-g_{q_1}(P(q,q_1),
(q_2-q_1))-(q_2-q_1)^2\vert\\
&=\vert g_{q_2}(P(q,q_2)-P(q,q_1),
(q_2-q_1))-(g_{q_1}-g_{q_2})(P(q,q_1), (q_2-q_1))-(q_2-q_1)^2\vert
\\
&\le\vert [P(q,q_2)-P(q,q_1)]\cdot (q_2-q_1)-(q_2-q_1)^2\vert+\epsilon
\Vert P(q,q_2)-P(q,q_1)
\Vert\, \Vert q_2-q_1\Vert \\
&\quad+ \epsilon \Vert P(q,q_2)\Vert\, \Vert q_2-q_1\Vert^2\\
 &  \le  2\epsilon (1+ \epsilon) \Vert
q_2-q_1\Vert^2,
\end{align*}
which proves that $D^2_{q}$ is strongly convex with respect to the
affine structure induced by the coordinate chart.

Let us give a geodesic version. This time we shall
need to use Eq.\ (\ref{ksf}). Also observe that from Eq.\
(\ref{ksd}) we have
\begin{align*}
\Vert P(q_1,q_2)-(q_2-q_1)\Vert&\le \epsilon \Vert q_2-q_1\Vert\le
\epsilon \Vert P(q_1,q_2)-(q_2-q_1)\Vert+\epsilon \Vert
P(q_1,q_2)\Vert,
\end{align*}
and hence
\[
\Vert P(q_1,q_2)-(q_2-q_1)\Vert\le \frac{\epsilon}{1-\epsilon} \Vert
P(q_1,q_2)\Vert,
\]
and
\begin{equation} \label{upo}
\Vert q_2-q_1\Vert\le \frac{1}{1-\epsilon} \Vert P(q_1,q_2)\Vert.
\end{equation}
Similarly, let $g$ be a metric such that at any point of $C$, $\Vert
g-I\Vert \le \epsilon$, and let $v$ be any vector. Then
\[
\vert (g-I)(v,v)\vert \le \epsilon v\cdot v = \epsilon
\vert(I-g)(v,v)+g(v,v) \vert\le  \epsilon \vert  (g-I)(v,v)\vert+\epsilon g(v,v),
\]
from which we obtain
\[
\vert (g-I)(v,v)\vert \le \frac{\epsilon}{1-\epsilon} \,\vert
g(v,v)\vert ,
\]
and
\[
\vert v\cdot v\vert\le \vert (I-g)(v,v)\vert +\vert g(v,v)\vert\le
\frac{1}{1-\epsilon} \,\vert g(v,v)\vert.
\]

Let $x:[0,1]\to C$ be a geodesic and let $q_1:=x(0)$, $q_2:=x(1)$.
We are  ready to prove Eq.\ (\ref{cob}).
\begin{align*}
&\frac{1}{2} \vert \frac{\dd }{\dd t}D(q,x(t))^2\vert_{t=1}-\frac{\dd }{\dd t} D(q,x(t))^2\vert_{t=0}-2D(q_1,q_2)^2 \vert\\
 &=\vert
g_{q_2}(P(q,q_2),P(q_1,q_2))+g_{q_1}(P(q,q_1),P(q_2,q_1))-
g_{q_1}(P(q_2,q_1),P(q_2,q_1)) \vert\\
&\le \vert
g_{q_1}(P(q,q_2),P(q_1,q_2))+g_{q_1}(P(q,q_1),P(q_2,q_1))-
g_{q_1}(P(q_2,q_1),P(q_2,q_1)) \vert \\& \quad +
\epsilon^2(1+\epsilon) \Vert q_2-q_1
\Vert^2 \\
&\le \vert
g_{q_1}(P(q,q_2),-P(q_2,q_1))+g_{q_1}(P(q,q_1),P(q_2,q_1))-
g_{q_1}(P(q_2,q_1),P(q_2,q_1)) \vert \\& \quad +2(1+\epsilon)
\epsilon^2 \Vert q_2-q_1 \Vert^2 \\
&\le \vert g_{q_1}(P(q,q_1)-P(q,q_2),P(q_2,q_1))-
g_{q_1}(q_1-q_2,P(q_2,q_1)) \vert \\& \quad +(2(1+\epsilon)
\epsilon^2+\epsilon(1+\epsilon)^2 ) \Vert q_2-q_1 \Vert^2 \\
&\le \vert g_{q_1}(P(q,q_1)-P(q,q_2)-(q_1-q_2),P(q_2,q_1))\vert
\\& \quad +(2(1+\epsilon)
\epsilon^2+\epsilon(1+\epsilon)^2 ) \Vert q_2-q_1 \Vert^2 \\
& \le 2 \epsilon (1+\epsilon) (1+2\epsilon)  \Vert q_2-q_1
\Vert^2\le 2 \epsilon \frac{(1+\epsilon) (1+2\epsilon)
}{(1-\epsilon)^2}\, \Vert P(q_1,q_2) \Vert^2\\ & \le 2 \epsilon
\frac{(1+\epsilon) (1+2\epsilon) }{(1-\epsilon)^3}
\,g_{q_2}(P(q_1,q_2),P(q_1,q_2)) \le 2 \epsilon \frac{(1+\epsilon)
(1+2\epsilon) }{(1-\epsilon)^3} \,D(q_1,q_2)^2.
\end{align*}
A reparametrization of $x$ with arc-length and a redefinition of
$\epsilon$ gives Eq.\ (\ref{cob}).

The statement of Theorem \ref{keg} concerning the strong convexity
of $D^2_q$ is immediate from the triangle inequality and from the
equivalences recalled in Sect.\ \ref{nof}.

\subsection{Splitting of the metric at a given radius  {\normalsize (Theorem \ref{oii})}}

On the coordinate ball let us introduce radial coordinates
$(\rho,\theta_1,\cdots,\theta_{n-1})$. Each  ball
$(D_p)^{-1}([0,r])$ is convex with respect to the affine structure induced by the
coordinate chart $(x_1,\cdots,x_n)$, thus the radial lines issued from
$p$ intersect the boundary of the ball only once. Let $r=D_p$, there
is therefore a function $\rho(r,\theta)$ establishing the dependence
of the radial coordinate on the angular ones. We known that
$r(q(\rho,\theta))$ is $C^{1,1}$, and Eq.\ (\ref{ksd}) and $\Vert
g-I\Vert \le \epsilon$ imply that $g(P(p,q), (q-p))\ne 0$, namely
$\p r/\p \rho \ne 0$. By the usual implicit function theorem
$\rho(r,\theta)$ is $C^{1,1}$, thus the components of $g$ in
coordinates $(r,\theta_1,\cdots,\theta_{n-1})$ are locally
Lipschitz. In particular, for any given $r>0$ the map which sends
$S^{n-1}$ (i.e.\ $\theta$) to $D_p^{-1}(r)$ is $C^{1,1}$ thus
differentiable.

Taking $r=cnst.$ in $g_{ij}(r,\theta)$ shows that the metric
induced on each hypersurface $D_p^{-1}(r)$ is locally Lipschitz.

The function $r$ is $C^{1,1}$ and by Theorem \ref{gux} $ \nabla r$
is the normalized geodesic field orthogonal to the level sets of
constant $r$. Thus $g^{-1}(\dd r,\dd r)=g(\nabla r, \nabla r)=1$ and
Eq.\ (\ref{kiv})
\[
g^{-1}=(\p_r+A_i(r,\theta)\p_i)^2+(h_r^{-1})_{ij} \p_i\otimes \p_j
\]
holds for some Lipschitz components $A_i$, $(h_r^{-1})_{ij}$. Its
inverse is
\[
g=d r^2+(h_r)_{ij}( \dd \theta_i-A_i \dd r) ( \dd \theta_j-A_j \dd
r),
\]
thus $h_r$ is the metric induced on the level set $D_p^{-1}(r)$.

Let us fix $\bar{r}$ so that $D_p^{-1}(\bar{r}) \subset C$. We know
that $\{\theta_i\}$ provides a $C^{1,1}$ chart over
$D_p^{-1}(\bar{r})$. Let us rename these coordinates $\{\alpha_i\}$
and let us extend them in a neighborhood of $D_p^{-1}(\bar{r})$
solving the differential equation $\p \theta_i/\p r=A_i(r,\theta)$,
$\theta_i(\bar{r},\alpha)=\alpha_i$. Since $A_i$ is Lipschitz the
solution $\theta(r,\alpha)$ is Lipschitz in $\alpha$  and $C^{1,1}$
in $r$ \cite[Ex.\ 1.2, Chap.\ 2]{hartman64} \cite[Prop.\
1.10.1]{cartan71} \cite[Cor.\ 1.6]{lang95}. Thus $g$ can be brought
to a direct sum form almost everywhere where the Jacobian $\p
\theta/\p \alpha$ exists. This Jacobian exists at $r=\bar{r}$ and
equals the identity matrix, thus
\[
h'_{ij}(\bar{r},\alpha)= h_{ks}(\bar{r},\theta(\bar{r},\alpha))
J^{k}_{i} J^{s}_{j}=
h_{ij}(\bar{r},\theta(\bar{r},\alpha))=h_{ij}(\bar{r},\alpha).
\]
This equality proves that the components $h'_{ij}(\bar{r},\cdot)$
exist and are Lipschitz in $\alpha$.

Of course, since the geodesic flow is Lipschitz we could deduce
immediately that normal spherical coordinates are Lipschitz and
hence that  $g$ can be brought to a direct sum form almost
everywhere, but we wanted to construct a coordinate system for which
the direct sum form was valid everywhere at a given radius.

\subsection{Some local results on the strong concavity of the  squared Lorentzian distance {\normalsize (Theorems \ref{igy}, \ref{jse}, Corollary \ref{cop})}}

\begin{proof}[Proof of Theorem \ref{igy}]
Let $C$ be a convex neighborhood of $p$ as in Theorems \ref{nsx} and
\ref{nsc} where $\epsilon\in (0,1/3)$ and the coordinate system is
such that $g_{\alpha \beta}(p)=\eta_{\alpha \beta}$,
$\Gamma^\mu_{\alpha \beta}(p)=0$. The metric defined by
\begin{equation} \label{iud}
r(v_1,v_2)=g(v_1,v_2)+2 (\p_0 \cdot v_1) (\p_0\cdot v_2)
\end{equation}
is positive definite at $p$ and hence $C$ can be chosen sufficiently small
that it is positive definite everywhere in $C$. Observe that at $p$
the metric $r$ once expressed in components coincides with the
identity matrix. Thus $C$ can be chosen sufficiently small that
\[
\Vert r-I\Vert \le \epsilon.
\]
From Eqs.\ (\ref{ksc})-(\ref{ksd})
\begin{equation}
 \Vert
P(q,q_1)-P(q,q_2)-P(q_2,q_1)\Vert\le 2\epsilon \Vert q_2-q_1\Vert.
\end{equation}
Let $x:[0,1]\to C$ be a geodesic, let  $q_1:=x(0)$ and $q_2:=x(1)$,
and let %belong to the level set  $(D^2_q)^{-1}(c)$, $c>0$. Let
\[V=P(q,q_1)-P(q,q_2)-P(q_2,q_1).\]
We have
\begin{align*}
\vert g_{q_1}(V,P(q_2,q_1))\vert&=\vert r(V,P(q_2,q_1))- 2(\p_0\cdot
 V) (\p_0\cdot P(q_2,q_1))\vert\\
& \le \Vert V\Vert\, \Vert P(q_2,q_1)\Vert\, [\Vert r\Vert +2\vert
]\le \Vert V\Vert\, \Vert P(q_2,q_1)\Vert\, (3+\epsilon)\\
&\le 2 \epsilon (1+\epsilon)(3+\epsilon) \Vert q_2-q_1\Vert^2
\end{align*}
Since $g_q$ is $C^{1,1}$ in $q$ it is strongly differentiable at $p$
with zero strong differential, thus $C$ can be chosen sufficiently small that
for every $q_1,q_2\in C$,
\begin{equation} \label{jug}
\Vert g_{q_1}\Vert\le 1+\epsilon, \qquad \Vert
g_{q_2}-g_{q_1}\Vert\le \epsilon \Vert q_2-q_1\Vert.
\end{equation}
Thus
\begin{align*}
&\frac{1}{2} \vert \frac{\dd }{\dd t}D^2_q(x(t))\vert_{t=1}-\frac{\dd }{\dd t} D^2_q(x(t))\vert_{t=0}-2D_{q_1}^2(q_2) \vert\\
 &=\vert
g_{q_2}(P(q,q_2),P(q_1,q_2))+g_{q_1}(P(q,q_1),P(q_2,q_1))-
g_{q_1}(P(q_2,q_1),P(q_2,q_1)) \vert\\
&\le \vert
g_{q_1}(P(q,q_2),P(q_1,q_2))+g_{q_1}(P(q,q_1),P(q_2,q_1))-
g_{q_1}(P(q_2,q_1),P(q_2,q_1)) \vert \\& \quad +
\epsilon^2(1+\epsilon) \Vert q_2-q_1
\Vert^2 \\
&\le \vert
g_{q_1}(P(q,q_2),-P(q_2,q_1))+g_{q_1}(P(q,q_1),P(q_2,q_1))-
g_{q_1}(P(q_2,q_1),P(q_2,q_1)) \vert \\& \quad +2(1+\epsilon)
\epsilon^2 \Vert q_2-q_1 \Vert^2
\\
&\le \vert g_{q_1}(V,P(q_2,q_1))\vert +2(1+\epsilon) \epsilon^2
\Vert q_2-q_1 \Vert^2
\\
&\le 2 \epsilon (1+\epsilon)(3+2\epsilon) \Vert q_2-q_1 \Vert^2
\end{align*}
A redefinition of $\epsilon$ proves Eq.\ (\ref{pat}).
\end{proof}

%
%Observe that in the next lemma there is no mention to the Euclidean
%or affine structures induced by a coordinate chart.
%
%\begin{theorem} \label{jse}
%Let  $M$ be a $C^{2,1}$-manifold endowed with a $C^{1,1}$
%Lorentzian metric $g$ and corresponding Levi-Civita Lipschitz
%connection. Let $p\in M$ and let $\epsilon>0$.  Let $\gamma: I\to
%M$, $t\mapsto \gamma(t)$, be a  timelike geodesic such that
%$p=\gamma(0)$. The convex normal set $C\ni p$ can be taken so small
%that once $I$ is redefined to be the connected component of
%$\gamma^{-1}(C)$ containing $0$, the following property holds.
%
%For every $q,r\in \gamma(I)$, $q\ne r$, there is a strictly convex
%normal  set $O\ni r$, $\bar{O}\subset C$, such that  all the points
%of $\bar{O}$ are either in the chronological future or past of $q$;
%every geodesic $x: J\to O$, connecting two points on the same level
%set $(D_q^2)^{-1}(c)$, $c<0$, of the function $D^2_q\colon C\times C
%\to \mathbb{R}$, $x$ satisfies, once reparametrized with respect to
%$g$-arc length ($x$ is necessarily spacelike by Theorem \ref{pwa}),
%for every $a,b\in J$,
%\begin{equation} \label{lor}
%  \vert \nabla_{\dot{x}(a)} D^2_q(x(a)) - \nabla_{\dot{x}(b)}
%D^2_q(x(b))-2 D(x(a),x(b))\vert \le
% \epsilon D(x(a),x(b)).
%\end{equation}
%In particular, $D^2_q(x(t))$ is strongly convex with parameter
%$2-\epsilon$
%\[
%D_q^2(x((1-\alpha) a +\alpha b))\le (1-\alpha) D_q^2(x(a))+\alpha
%D_q^2(x(b))-(1-\frac{\epsilon}{2}) \,\alpha (1-\alpha)
%D(x(a),x(b))^2,
%\]
%and the sets of the form $(D^2_q)^{-1}((-\infty,c))\cap O$ for $c<0$
%are strictly geodesically convex.
%\end{theorem}

\begin{proof}[Proof of Theorem \ref{jse}] It is sufficient to prove Eq.\ (\ref{lor}) for $\epsilon< 1/9$. Let
us parametrize $\gamma$ with respect to $g$-arc length (i.e.\ proper
time), $g(\dot \gamma,\dot \gamma)=-1$. As a first step let us
introduce, through a quadratic locally invertible coordinate transformation, a coordinate system such that $\dot{\gamma}(0)=\p_0$,
$g_{\alpha \beta}(p)=\eta_{\alpha \beta}$ and $\Gamma^\mu_{\alpha
\beta}(p)=0$, so that  the hypothesis of Theorem \ref{igy} apply.

Let us consider the spacelike subspace of $T_{\gamma(t)} M$, given
by
\[
S(t):=\textrm{Ker} \, g(\dot{\gamma}(t),\cdot).
\]
Let $L(t)$ be the subspace of $T_{\gamma(t)}M$ spanned by $\{\p_i,
i\ge 1\}$. For $t=0$, $S(0)=L(0)$, thus by
 continuity there is a neighborhood of $0$,
such that for $t$ belonging to this neighborhood $S(t)$ makes with
$L(t)$ an (Euclidean) angle smaller than $1/16$ rad. Thus $C$ can be taken sufficiently small that this property holds for every $t\in \gamma^{-1}(C)$.

Let $x:[0,1]\to C$ be a geodesic such that $x(0)$ and $x(1)$ belong
to the same level set of $(D^2_q)^{-1}(c)$, $c<0$, for some $q\in
C$. By Theorem \ref{pwa} $x$ cannot be future directed causal
otherwise $D^2_q(x(1))>D^2_q(x(0))$, and it cannot be past directed
causal otherwise $D^2_q(x(1))<D^2_q(x(0))$, thus $x$ is spacelike.
Let $a,b\in [0,1]$, $a<b$, then $y(t)=x((b-a)t+a)$ is such that
$y(0)=x(a)$ and $y(1)=x(b)$.
 Let us use Eq.\ (\ref{upo})   in
Eq.\ (\ref{pat}) for the geodesic $y$
\begin{equation} \label{par}
\vert \frac{\dd }{\dd t}D^2_q(y(t))\vert_{t=1}-\frac{\dd }{\dd t}
D^2_q(y(t))\vert_{t=0}-2D^2(x(a),x(b)) \vert\le
\frac{\epsilon}{(1-\epsilon)^2}\,\Vert P(x(a),x(b)) \Vert^2.
\end{equation}
Now we have to impose some constraint on $x(a)$ and $x(b)$ so as to
obtain an inequality of the form
\[\Vert P(x(a),x(b))
\Vert^2\le {\sigma(\epsilon)} g_{x(b)}(P(x(a),x(b)),P(x(a),x(b)))
\]
where $\epsilon \sigma(\epsilon) \to 0$ for $\epsilon \to 0$.

We recall that given two Lorentzian metrics $g_1,g_2$ on a
differentiable manifold, $g_1<g_2$ means that at each point the
timelike cone of $g_2$ contains the causal cone of $g_1$. Let me
consider the metric
\[
\eta^+:=-\frac{1+2\sqrt{\epsilon}}{1-2 \sqrt{\epsilon}}\, (\dd
x^0)^2+(\dd \vec{x})^2
\]
which satisfies $g<\eta^+$ at $p$. By continuity we can choose $C$
so small that it holds anywhere in $C$.

Suppose that $v\in TC$ is a spacelike vector for $\eta^+$, that is
$\eta^+(v,v)\ge 0$, then a little algebra shows that this condition
can be rewritten
\[
v\cdot v\le \frac{1}{\sqrt{\epsilon}}(-(1+\sqrt{\epsilon})(
v^0)^2+(1-\sqrt{\epsilon})( \vec{v})^2).
\]
Now observe that at $p$ for every $v\in T_p C$, $\Vert v\Vert=1$,
\[
-(1+\sqrt{\epsilon})( v^0)^2+(1-\sqrt{\epsilon})( \vec{v})^2<
g_p(v,v)=-(v^0)^2+(\vec{v})^2,
\]
thus by continuity the same holds  at any point in a neighborhood of
$p$, and we can choose $C$ sufficiently small that for every $v\in
TC$, and $q\in C$
\[
(-(1+\sqrt{\epsilon})( v^0)^2+(1-\sqrt{\epsilon})( \vec{v})^2)\le
g_q(v,v).
\]
As a consequence, for every $q\in C$ and for every $v\in TC$ such that $\eta^+(v,v)\ge 0$
\begin{equation} \label{jwj}
v\cdot v\le
\frac{1}{\sqrt{\epsilon}}\, g_q(v,v).
\end{equation}

%is $\eta^+$-spacelike. In particular, we can redefine $C$ and $I$ so
%that $\gamma\colon I\to C$ and for every $t\in \bar{I}$, $S(t)$
%makes with $\{\p_i, i\ge 1\}$ an (Euclidean) angle smaller than
%\pi/12$.

Let  $q=\gamma(t_q)$, $r=\gamma(t_r)$, $t_q,t_r\in I$, $t_q\ne t_r$.

In Section \ref{vko} we proved that $O:=\bar{B}(r,\delta)$ is
strictly convex normal for any sufficiently small $\delta$, and  in
Section \ref{oyt} through Eq.\ (\ref{pki}) we proved that the
Euclidean velocities $\dot{x}(t)$, $t\in [0,1]$ of any geodesic
$x:[0,1]\to \bar{O}$ are bounded by a constant $V(\delta)$ which
goes to zero for $\delta \to 0$. Since $\Gamma^\mu_{\alpha \beta}$
is bounded in a neighborhood of $r$, the geodesic equation implies
that there is $M>0$ such that $\frac{\dd v^\mu}{\dd t}\le M \Vert
v\Vert^2$, which is the same as saying that the (Euclidean) radius
of curvature is greater than $1/M$ and hence $\dot{x}$ can vary in
an subinterval of $[0,1]$ of an angle of at most $V(\delta) M$. Thus
if we take $\delta$ sufficiently small we can make the variation of
angle on the tangent to any geodesic $x:[0,1]\to \bar{O}$ to be
bounded by $\frac{1}{8}$ rad.

We have already shown in Theorem \ref{gux} that $D^2_q$ is $C^{1,1}$
on $C$ with a differential at $r$ given by $2g(P(q,r),\cdot)$. Since
the differential is continuous, $D^2_q$ is strongly differentiable
thus for every $\beta$ we can write  for sufficiently small
$\delta>0$,  for $q_1,q_2\in \bar{O}$
\[
\vert D^2_q(q_2)-D^2_q(q_1)-2g(P(q,r),q_2-q_1)\vert\le  \beta
{\sqrt{-g(P(q,r),P(q,r))}} \,\Vert q_2-q_1 \Vert.
\]
If $q_1$ and $q_2$ belong to the same level set, recalling that
\[
\dot \gamma(t_r)= P(q,r)/\sqrt{-g(P(q,r),P(q,r))}
\]
we obtain
\[
\vert g_r(\dot \gamma(t_r),q_2-q_1)\vert\le \beta
 \, \Vert q_2-q_1 \Vert.
\]
As $\beta\to 0$, the direction of $q_2-q_1$ is constrained to
approach $S(t_r)$. As the space of directions is a sphere and hence
compact there is a value of $\beta$ and a corresponding $O$ such
that whenever $q_1,q_2\in \bar{O}$ belong to the same level set of
$D_q$, $q_2-q_1$ makes with $S(t_r)$ an (Euclidean) angle smaller
than $1/16$ rad.

Since $\Vert P(q_1,q_2)-(q_2-q_1)\Vert \le \epsilon \Vert
q_2-q_1\Vert$ and $\epsilon <1/9$, the vector $P(q_1,q_2)$ makes
with $q_2-q_1$ an Euclidean angle smaller than $2 \arcsin
(\epsilon/2) <1/8$ rad (because $\epsilon<1/9$). Thus $P(q_1,q_2)$
makes with $S(t_r)$ an (Euclidean) angle smaller than $\frac{3}{16}$
rad, and with $L(t_r)$ an (Euclidean) angle smaller than
$\frac{2}{8}$ rad. If $x(a)$ and $x(b)$ are any two points in the
geodesic $x$ joining $q_1$ to $q_2$ then $P(x(a),x(b))$ makes an
angle with $P(q_1,q_2)$ of at most $\frac{1}{8}$ rad, thus it makes
an angle with  $L(t_r)$  smaller than $\frac{3}{8}$ rad. Moreover,
\[
\arctan([\frac{1-2\sqrt{\epsilon}}{1+2\sqrt{\epsilon}}]^{1/2})>
\arctan([\frac{1-2/3}{1+2/3}]^{1/2})>\frac{3}{8} \ \textrm{rad}
\]
thus $P(x(a),x(b))$ is $\eta^+$-spacelike (we knew already that it
was $g$-spacelike). Since $P(x(a),x(b))$ is $\eta^+$-spacelike we
have using Eq.\ (\ref{par}) and Eq. (\ref{jwj})
\begin{align*} \label{pat}
&\vert \nabla_{(b-a) \dot{x}(b)}D^2_q-\nabla_{(b-a) \dot{x}(a)}
D^2_q-2D^2(x(a),x(b)) \vert\\ &\quad \le
\frac{\sqrt{\epsilon}}{(1-\epsilon)^2}\,
g_{x(b)}(P(x(a),x(b)),P(x(a),x(b))) =
\frac{\sqrt{\epsilon}}{(1-\epsilon)^2}\,D^2(x(a),x(b)) ,
\end{align*}
which a redefinition of $\epsilon$, and a redefinition of
parametrization such that $(b-a)\to D(x(a),x(b))$ (namely the
$g$-arc length parametrization) brings to the form of Eq.\
(\ref{lor}).

The statement concerning the strong convexity of $D^2_q\circ x$
is immediate from the equivalences
recalled in Sect.\ \ref{nof}.

Let us prove the strict convexity of
$(D^2_q)^{-1}((-\infty,c))\cap O$. Suppose that $c<0$ is such that
$G:=(D^2_q)^{-1}((-\infty,c))\cap O\ne \emptyset$, otherwise there
is nothing to prove. Let $x:[a,b]\to C$ be a geodesic such that
$x(a),x(b)\in \bar{G}$, then $x(a),x(b)\in \bar{O}$, and since $O$
is strictly geodesically convex $x$ is contained in $O$ but for the
endpoints. Since $D^2_q:C\times C\to \mathbb{R}$ is continuous,
$x(a),x(b)\le c$ and we have to show that for every $t\in (a,b)$,
$D^2_q(x(t))<c$.

Suppose not, then there is some $t_{max}\in (a,b)$ such that
$D^2_q(x(t_{max}))$ is the maximum of $D^2_q(x(\cdot))$ over $[a,b]$
and $D^2_q(x(t_{max}))\ge c$. In particular,
\[
\frac{\dd (D^2_p\circ x)}{\dd t}\vert_{t=t_{max}}=\nabla_{\dot{x}(t_{max})} D^2_q(x(t_{max}))=0.
\]
However, by Eq.\
(\ref{lor}) no two values of $t$ can attain this maximum, thus in
any neighborhood $E$  of $t_{max}$  we can find $t_1,t_2\in
E\backslash \{t_{max}\}$, $t_1<t_{\max}<t_2$ such that
$D^2_q(x(t_1))=D^2_q(x(t_2))<D^2_q(x(t_{max}))$, thus we can apply
once again the equation following Eq.\ (\ref{lor}) using these two values as endpoints of
the spacelike geodesic. But that equation implies that
$D^2_q(x(t_{max}))< D^2_q(x(t_2))$,  a contradiction.
\end{proof}

\begin{lemma} \label{kji}
Let $p\in M$ and let $\gamma: I\to M$, $t\mapsto \gamma(t)$, be a
timelike geodesic such that $p=\gamma(0)$. The convex normal set
$C\ni p$ can be taken sufficiently small that once $I$ is redefined to be the
connected component of $\gamma^{-1}(C)$ containing $0$, the
following property holds. We can find $q_1=\gamma(t_1)$, and
$q_2=\gamma(t_2)$ with $t_1<0<t_2$, and a strictly convex normal set
$O\ni p$, such that introduced the constants $c_1:=D^2_{q_1}(p)$ and
$c_2:=D^2_{q_2}(p)$, we have that, for any $c_1'>c_1$ sufficiently
close to $c_1$ and for any $c_2'>c_2$ sufficiently close to $c_2$,
\[
S(c_1',c_2')=(D^2_{q_1})^{-1}((-\infty, c_1'))\cap
(D^2_{q_2})^{-1}((-\infty, c_2')) \cap O
\]
is strictly convex normal and globally hyperbolic.
\end{lemma}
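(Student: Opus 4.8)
The plan is to extract $C$, $q_1$, $q_2$ and $O$ from Theorem \ref{jse}, and then to read the strict convex normality of $S(c_1',c_2')$ off the convexity statement of that theorem (plus Theorem \ref{bik}), and its global hyperbolicity off Theorem \ref{pwa} and Gauss' Lemma. First I take for $C$ the convex normal neighbourhood of $p$ produced by Theorem \ref{jse} and redefine $I$ accordingly; then $I$ is an open interval about $0$, and I pick $t_1<0<t_2$ in $I$ and set $q_1=\gamma(t_1)$, $q_2=\gamma(t_2)$. Since $\gamma|_{[t_1,0]}$ and $\gamma|_{[0,t_2]}$ are future timelike segments lying in $C$, one has $p\in I^+_C(q_1)\cap I^-_C(q_2)$, so $c_i:=D^2_{q_i}(p)<0$. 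Applying Theorem \ref{jse} with $(q,r)=(q_1,p)$ and with $(q,r)=(q_2,p)$ yields strictly convex normal sets $O_1,O_2\ni p$ with $\bar O_i\subset C$ such that $(D^2_{q_i})^{-1}((-\infty,c))\cap O_i$ is strictly geodesically convex for every $c<0$; connectedness of $\bar O_i$ together with $p\in I^{\pm}_C(q_i)$ forces $\bar O_1\subset I^+_C(q_1)$ and $\bar O_2\subset I^-_C(q_2)$ (in particular $q_i\notin\bar O_i$). Finally I fix a strictly convex normal coordinate ball $O=B(p,\delta)$ (Theorem \ref{nsx}) with $\bar O\subset O_1\cap O_2$. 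For $c_i'>c_i$ close to $c_i$ the set $S=S(c_1',c_2')$ is open, $D^2_{q_i}<0$ on it, and $p\in S$.

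\emph{Strict convex normality.} Here $S$ is the intersection inside $O$ of $(D^2_{q_1})^{-1}((-\infty,c_1'))\cap O$ and $(D^2_{q_2})^{-1}((-\infty,c_2'))\cap O$. Since $\bar O\subset O_i$ and connecting geodesics are unique throughout $C$, strict geodesic convexity of $(D^2_{q_i})^{-1}((-\infty,c_i'))\cap O_i$ descends to $(D^2_{q_i})^{-1}((-\infty,c_i'))\cap O$, and a finite intersection of strictly geodesically convex subsets of the strictly convex normal set $O$ is again strictly geodesically convex; hence any two points of $\bar S$ are joined by the unique connecting geodesic, which lies in $S$ but for its endpoints. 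That $S$ is a normal neighbourhood of each $s\in S$ follows by taking the star-shaped $M_s\subset\Omega_s$ with $\exp_s\colon M_s\to O$ a Lipeomorphism and putting $N_s=(\exp_s|_{M_s})^{-1}(S)$: geodesic convexity of $S$ makes $N_s$ star-shaped and $\exp_s\colon N_s\to S$ a Lipeomorphism. So $S$ is strictly convex normal, and hence causally simple by Theorem \ref{bik}.

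\emph{Global hyperbolicity.} Fix $a,b\in S$; one must show $J^+_S(a)\cap J^-_S(b)$ is compact. By Gauss' Lemma, $dD^2_{q_1}=2g_{P(q_1,\cdot)}(P(q_1,\cdot),\cdot)$ (Theorem \ref{gux}), and on $\bar O\subset I^+_C(q_1)$ the position vector $P(q_1,\cdot)$ is future timelike, so the Finslerian reverse Cauchy--Schwarz inequality makes $D^2_{q_1}$ non-increasing and $D^2_{q_2}$ non-decreasing along future causal curves contained in $\bar O$. Consequently every $z\in J^+_S(a)\cap J^-_S(b)$ obeys $D^2_{q_1}(z)\le D^2_{q_1}(a)<c_1'$ and $D^2_{q_2}(z)\le D^2_{q_2}(b)<c_2'$, and these inequalities survive passage to the closure. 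For a limit point $z=\lim z_n$ with $a\le_S z_n\le_S b$, I use the characterization of $J^+_S(a)$ as the $\exp_a$-image of the future causal cone (Remark after Theorem \ref{pwa}), continuity of $\exp^{-1}$ on $C$, and the Lipschitz dependence of the geodesic flow (Theorem \ref{flo}), to produce a future causal geodesic from $a$ to $z$ and one from $z$ to $b$, both lying in $\bar O$; thus $z\in J^+_{\bar O}(a)\cap J^-_{\bar O}(b)$. Next I show $S$ can be made arbitrarily small: by the reverse triangle inequality of Theorem \ref{pwa}, if $x\in\bar O$ satisfies $D^2_{q_1}(x)\le c_1$ and $D^2_{q_2}(x)\le c_2$, then concatenating the geodesics $q_1\to x$ and $x\to q_2$ gives a future causal curve in $C$ of Lorentzian--Finsler length $\ge\sqrt{-c_1}+\sqrt{-c_2}=D^L_{C,q_1}(q_2)$, which by the equality case of Theorem \ref{pwa} forces $x$ to lie on $\gamma([t_1,t_2])$ and then, comparing $D^2_{q_i}$-values, $x=p$; since $\bar O$ is compact the decreasing family $\overline{S(c_1',c_2')}$ intersects down to $\{p\}$, so the $c_i'$ may be chosen so that $S\subset B(p,\delta')$ with $\delta'$ as small as desired. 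Finally, with a flat metric $g^+$ on $C$ satisfying $g<g^+$ (Lemma \ref{gpl}), $J^+_{\bar O}(a)\cap J^-_{\bar O}(b)$ is contained in the $g^+$-causal diamond of $a$ and $b$, a compact convex Euclidean set of diameter bounded by a constant times $\Vert a-b\Vert$, which for $a,b\in B(p,\delta')$ with $\delta'$ small enough lies inside $O$. Combining the three facts, $\overline{J^+_S(a)\cap J^-_S(b)}\subset(D^2_{q_1})^{-1}((-\infty,c_1'))\cap(D^2_{q_2})^{-1}((-\infty,c_2'))\cap O=S$; as $J^+_S(a)\cap J^-_S(b)$ is closed in $S$, it equals its closure in $M$, a closed subset of the compact $\bar O$, hence compact. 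Therefore $S$ is globally hyperbolic.

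The main obstacle is the last step: the monotonicity of $D^2_{q_1}$ and $D^2_{q_2}$ controls how a causal diamond in $S$ could leak across the level hypersurfaces $\{D^2_{q_i}=c_i'\}$, but preventing it from reaching $\partial O$ is exactly what forces one to shrink $S$ (by letting $c_i'\downarrow c_i$) and to exploit the near-flatness of the light cones on $O$. A secondary technical point is the limiting argument for the connecting causal geodesics joining $a$ to $z_n$ (and $z_n$ to $b$), which relies on Theorem \ref{flo} and on the exponential-map description of $J^\pm_S$.
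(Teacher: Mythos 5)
Your proof is correct, and its skeleton coincides with the paper's: both extract $C$, $q_1,q_2$ and the sets $O_1,O_2$ from Theorem \ref{jse}, obtain strict convex normality of $S$ as an intersection of strictly (geodesically) convex sets, show that the closed set $(D^2_{q_1})^{-1}((-\infty,c_1])\cap(D^2_{q_2})^{-1}((-\infty,c_2])\cap\bar O$ reduces to $\{p\}$ via the maximization statement of Theorem \ref{pwa}, shrink $c_1',c_2'$ by a nested-compact-sets argument, and exploit the monotonicity of $D^2_{q_1}$ and $D^2_{q_2}$ along future directed causal curves. The only genuine divergence is the final step. The paper notes that, once $\bar S\subset O$, the monotonicity shows that no causal curve of $C$ can leave and re-enter $S$, i.e.\ $S$ is causally convex in $C$; since $C$ is causally simple (Theorem \ref{bik}) and relatively compact, each diamond $J^+_S(a)\cap J^-_S(b)=J^+_C(a)\cap J^-_C(b)$ is a closed subset of the compact $\bar S$, and global hyperbolicity is immediate. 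You instead apply Theorem \ref{bik} to $S$ itself and control the closure of each diamond directly; this is valid, but observe that your own shrinking step already yields $\overline{S}\subset \bar B(p,\delta')\subset O$, so the closure of the diamond lies in $\bar S$, hence in $O$, automatically: the flat-metric comparison via Lemma \ref{gpl} and the limiting construction of connecting geodesics in $\bar O$ are therefore redundant (though not incorrect). Likewise, the paper simply takes $O=O_1\cap O_2$; your auxiliary coordinate ball inside $O_1\cap O_2$ is harmless but unnecessary.
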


\begin{proof}
By Theorem \ref{jse} we can  find a strictly convex relatively
compact set $C\ni p$ with the property of that theorem. In
particular, let $q_1=\gamma(t_1)$, $q_1\in C$, $t_1<0$, and let
$q_2=\gamma(t_2)$, $q_2\in C$, $t_2>0$. There is a strictly convex
normal set $O_1\ni p$, $\bar{O}_1\subset I^+_C(q_1)$, such that
$D^2_{q_1}:C\times C\to \mathbb{R}$ is strongly convex over the
geodesics segments in $O_1$ connecting two points in its level
surfaces. Similarly there is a strictly convex set $O_2$,
$\bar{O}_2\subset I^{-}_C(q_2)$, with an analogous property with
respect to $q_2$. Let $O=O_1\cap O_2$.

Let us introduce the closed sets \[A(c_1',c_2')=
(D^2_{q_1})^{-1}((-\infty, c_1'])\cap (D^2_{q_2})^{-1}((-\infty,
c_2'])\cap \bar{O}.\] The set $A(c_1,c_2)$ contains only the point $p$ for
otherwise there would be a different timelike curve composed of two
geodesic pieces of total Lorentzian length equal with  that of
$\gamma \vert_{[t_{1},t_2]}$, a contradiction to Theorem
\ref{pwa}. Since the intersection of a family of non-empty compact
sets is non-empty, for sufficiently large $i$ the set
$A(c_1+1/i,c_2+1/i)$ must be disjoint from $\p O$.

Thus for $c_1'\le c_1+1/i$ and $c_2'\le c_2+1/i$, $S(c_1',c_2')$ is
the component of the open set $(D^2_{q_1})^{-1}((-\infty, c_1'))\cap
(D^2_{q_2})^{-1}((-\infty, c_2'))$ contained in $O$ and its closure
is contained in $O$. As $D^2_{q_1}$ is decreasing over future
directed causal curves and $D^2_{q_1}$ is increasing over future
directed causal curves, no causal curve in $C$ can leave and reenter
$S(c_1',c_2')$, in particular since $C$ is causally simple and
relatively compact, $S(c_1',c_2')$ is globally hyperbolic.

The set $S(c_1',c_2')$ is strictly convex normal because it is the
intersection of the strictly convex normal set
$(D^2_{q_1})^{-1}((-\infty, c_1'))\cap O_1$ and the strictly convex
normal set $(D^2_{q_2})^{-1}((-\infty, c_2'))\cap O_2$.
\end{proof}

\begin{proof}[Proof of Corollary \ref{cop}]
Just take $C_i=S(c_1+1/(k+i),c_2+1/(k+i))$ for sufficiently large
$k$ and use the results of Lemma \ref{kji} (see also its proof).
\end{proof}

\begin{acknowledgements}
 After this paper was posted on the preprint archive  (1308.6675) another work by M. Kunzinger, R. Steinbauer, M. Stojkovi\'c and J. A. Vickers was posted which obtained some of the results contained in this work through a regularization scheme for $C^{1,1}$ metrics \cite{kunzinger13b}. The approaches of this and that  work nicely complement each other. However, the  results of this work seem more general (e.g.\ they apply to Finsler geometry) and stronger (e.g.\ we prove strong differentiability of the exponential map at he origin). Also the proofs we provide are technically more elementary. The reader is referred to \cite{kunzinger13b} for a comparison. I thank Piotr
Chru{\'s}ciel  for some useful suggestions.

% present less technical proofs  and the results are comparatively stronger.
%Our analysis applies to general sprays and hence to Finsler manifolds as well. The exponential map is show to be strongly differentiable at the origin

%If you'd like to thank anyone, place your comments here
%and remove the percent signs.
\end{acknowledgements}

% BibTeX users please use one of
%\bibliographystyle{spbasic}      % basic style, author-year citations
%\bibliographystyle{spmpsci}      % mathematics and physical sciences
%\bibliographystyle{spphys}       % APS-like style for physics
%\bibliography{}   % name your BibTeX data base

%\bibliography{../../bibliografie/simultaneity,../../bibliografie/libri,../../bibliografie/miei,../../bibliografie/mieiPreprints,../../bibliografie/mieiProceedings}
%\bibliographystyle{spmpsci}

\def\cprime{$'$}

\end{document}